\documentclass[a4paper]{amsart}
\usepackage{amsmath,amsthm,amssymb,latexsym,epic,bbm,comment,yfonts,mathrsfs}
\usepackage{graphicx,enumerate,stmaryrd,rotating}
\usepackage[all]{xy}
\xyoption{2cell}

\allowdisplaybreaks
%%%%%%%%%%%%%%%%%%%%%%%%%%%%%%%%%%%%%%% labels of \label{}
%\usepackage[notref,notcite]{showkeys}
%%%%%%%%%%%%%%%%%%%%%%%%%%%%%%%%%%%%%%
\usepackage[active]{srcltx}
\usepackage[parfill]{parskip}

\begin{comment}
\documentclass[11pt]{amsart} \textwidth=14.5cm \oddsidemargin=1cm
\evensidemargin=1cm
\end{comment}

%\usepackage[notref,notcite]{showkeys}
%\usepackage{ytableau}
\usepackage{youngtab}

\usepackage[latin1]{inputenc}
\usepackage{amsxtra}
\usepackage{amsmath}
\usepackage{amscd}
\usepackage{amssymb}
\usepackage{amsfonts}
\usepackage[all]{xy}
\usepackage{mathrsfs}
\usepackage{amsthm}
\usepackage{color}
\usepackage{upgreek}
\usepackage{verbatim}
\usepackage{enumerate}
\usepackage{latexsym}
\usepackage{graphicx}
\usepackage{tikz}
\usepackage{todonotes}
\usepackage{setspace}
\usepackage{hyperref}
\usepackage{stmaryrd}
\usepackage{nicefrac}
\usepackage{euscript}
\usetikzlibrary{decorations.pathmorphing}

 \definecolor{darkgreen}{HTML}{336633}
 \definecolor{darkred}{HTML}{993333}
\makeatletter

\newcommand{\arxiv}[1]{\href{http://arxiv.org/abs/#1}{\tt
    arXiv:\nolinkurl{#1}}}

\theoremstyle{plain}
\newtheorem{thm}{Theorem}%[section]

\newtheorem{lem}[thm]{Lemma}
\newtheorem{prop}[thm]{Proposition}

\newtheorem{cor}[thm]{Corollary}
\newtheorem{df-prop}[thm]{Definition-Proposition}

\theoremstyle{definition}

\theoremstyle{remark}
\newtheorem{rem}[thm]{Remark}
\newtheorem{ex}[thm]{Example}

%%%%%%%%%%%%%%%%removed 2020-08-23
%\xyoption{all}
%\textheight 44\baselineskip
%\textwidth 17cm
%\oddsidemargin -0.3cm
%\evensidemargin -0.3cm

%Letters----------------------------------------------------

%symbols--------------------------------------------

\def\onto{\twoheadrightarrow}

%\names----------------------------------------------

\def\Res{\operatorname{Res}\nolimits}
\def\Ind{\operatorname{Ind}\nolimits}
\def\Ext{\operatorname{Ext}\limits}

\def\id{\operatorname{id}\nolimits}

\def\ep{\epsilon}
\def\gl{\mathfrak{gl}}

\def\la{\lambda}

\def\pn{\mf{pe} (n)}
\def\ov{\overline}

%%%%%%20200812

%Additional letters------------------------

%%%%%%%%%%%%%%%%%%%%%%%%%%%%%%%%%%%%%%%%%%%%%%%%%%% 2018-10-09
\newcommand{\mc}{\mathcal}
\newcommand{\mf}{\mathfrak}
\newcommand{\C}{\mathbb C}
\newcommand{\oo}{{\ov 0}}
\newcommand{\oa}{{\bar 0}}
\newcommand{\ob}{{\bar 1}}
\newcommand{\vare}{\epsilon} %%%% change-original \vere=\varepsilon

\newcommand{\fg}{\mathfrak{g}}

\newcommand{\fb}{\mathfrak{b}}

\newcommand{\fn}{\mathfrak{n}}

\newcommand{\mZ}{\mathbb{Z}}

\newcommand{\mC}{\mathbb{C}}

\newcommand{\h}{\mathfrak{h}}

\newcommand{\Coind}{{\rm Coind}}

\newcommand{\g}{\mathfrak{g}}

\newcommand{\fp}{\mathfrak{p}}

%%%%%%%%%%%%%%%%%%%%%%%%%%%%%%%%%%%%%%%%%%%%%%%%%%%%%%%%%%%%5

\newcommand{\nb}{{\nabla}}
%%%%%%%%%%%%%%%%%%%%%%%%%%%%%%%%%%%%%%%%
\setcounter{tocdepth}{1}

%%%%%%%%%%%%%%%%%%%%%%%%%%%%%%%%%%%%%%%%%%%%%%%%%%%%%%%%%%
\newcommand{\Z}{{\mathbb Z}}

%%%%%%%%%%%%%%%%%%%%%%%%%%%%%%%%%%% 2019-06-21

%%%%%%%%%%%%%%%%%%%%%%%%%%%%%%%%%%%%%%2020-08-03
\def\epd{{\emph{pd}}}
\def\pd{{\text{pd}}}
\def\eid{{\emph{id}}}
\def\id{{\text{id}}}

\def\gld{{\text{gl.dim}}}

 %%%%%%%%%%%%%%%%%%%%%%2020-08-23
 \def\uv{\underline}

\begin{document}

\numberwithin{equation}{section}

\title[Some homological properties of
category $\mc O$ for Lie superalgebras]{Some homological properties of
	category $\mc O$\\ for Lie superalgebras}

\author{Chih-Whi Chen  and Volodymyr Mazorchuk}
\date{}

\begin{abstract}  
For classical Lie superalgebras of type I, we provide necessary and sufficient 
conditions for a Verma supermodule $\Delta(\la)$ to be such that every non-zero 
homomorphism from another Verma supermodule to $\Delta(\la)$ is injective. This 
is applied to describe the socle of the cokernel of an inclusion of Verma supermodules
over the periplectic Lie superalgebras $\pn$ and, furthermore, to reduce the problem of
description of $\mathrm{Ext}^1_{\mc O}(L(\mu),\Delta(\la))$ for $\pn$  to the similar 
problem for the Lie algebra $\mathfrak{gl}(n)$. 

Additionally, we study the projective and injective dimensions of structural 
supermodules in parabolic category $\mc O^\fp$ for classical Lie superalgebras. 
In particular, we completely determine these dimensions for structural supermodules 
over the periplectic Lie superalgebra $\pn$ and the ortho-symplectic Lie 
superalgebra $\mf{osp}(2|2n)$. 
\end{abstract}

\maketitle

\noindent
\textbf{MSC 2010:} 17B10 17B55  

\noindent
\textbf{Keywords:} Lie algebra;
Lie superalgebra; module; projective dimension; socle
\vspace{5mm}

\section{Introduction}\label{sec1}

Contempoary representation theory uses a wide range of homological
methods. This makes understanding homological properties of 
algebraic structures an interesting and important problem.

In Lie theory, one of the most important representation theoretic
objects of study is the Bernstein-Gelfand-Gelfand (BGG)
category $\mathcal{O}$ associated to a fixed triangular 
decomposition of a semisimple finite dimensional complex Lie 
algebra, introduced in \cite{BGG0,BGG}. This category has a number of
remarkable properties and applications, see \cite{Hu08} and
references therein.
Study of various homological invariants of category $\mathcal{O}$ 
goes back to the original paper \cite{BGG} which determines its 
global dimension. Some time ago, the second author started a
series of papers \cite{Ma1,Ma2} aimed at a systematic
study of homological invariants of various classes of structural
modules in category $\mathcal{O}$. This series was 
subsequently continued with \cite{CM1,CoM2,KMM1} covering a wider
class of questions and objects and also extending the results
to Rocha-Caridi's parabolic generalization of $\mathcal{O}$
from \cite{RC}.

Both category $\mathcal{O}$ and its parabolic generalization, 
can be defined, in a natural way, in the setup of Lie superalgebras,
see \cite{Ka,Mu12,ChWa12,Ma,AM}. The objects obtained in this way
are very interesting and highly non-trivial, see \cite{Mu12,ChWa12}
and references therein. Therefore it is natural to study homological
invariants for category $\mathcal{O}$ for Lie superalgebras.
The first step here would be to find out which of the classical
results (i.e. results for Lie algebras) can be generalized to the
super-setting and how. This is one of the motivations behind
some parts of the present paper.

The recent paper \cite{KKM} discovered an interesting connection
between bigrassmannian permutations and cokernels of inclusions
of Verma modules for the Lie algebra $\mathfrak{sl}_n$. 
The second motivation of this paper is to investigate to which
extent this result generalizes to the super-setting. This is 
a non-trivial challenge as it is very well-known that, unlike
the classical case, homomorphisms between Verma supermodules over
Lie superalgebras are often not injective. 

To our surpize, it turns out that, both for the homological 
properties of structural supermodules and for the description of 
cokernels of inclusions of Verma supermodules, it is possible to 
obtain fairly complete results for the periplectic
Lie superalgebra $\pn$. This is the main aim of the
present paper.

Let us now briefly describe the structure of the
paper and the main results. In Section~\ref{Sect::pre} 
we collect preliminaries on classical Lie superalgebras 
which are relevant for the rest of the paper. 

In Section~\ref{Sect::HomVerSuper} we describe Verma supermodules 
for classical Lie superalgebras of type I having the property that
any homomorphism from any other Verma supermodule to
them is injective. As an application, we prove that every 
non-zero homomorphism between Verma supermodules for $\pn$ is an 
embedding.
 
In Section~\ref{Sect::soccokerpn}, we describe the socle of the 
cokernel of a nonzero homomorphism between Verma supermodules 
for  $\pn$. We manage to reduce the problem for $\pn$ to the 
corresponding problem for $\gl(n)$, where the main result of
\cite{KKM} applies.

Section~\ref{Sect::homodim} is devoted to the study of possible connections 
between homological invariants for a Lie superalgebra
and the corresponding homological invariants for its even
part, which is a Lie algebra. In particular, we relate
the finitistic dimension of the category of
supermodules for a classical Lie superalgebra to the 
global dimension of the category of modules for its 
even part.  In Sections~\ref{sect::52} and \ref{sect::54} we 
focus on projective and injective dimensions of structural 
modules in an arbitrary parabolic cateogry $\mc O^\fp$. 
In Section~\ref{sect::55} we introduce the notion of 
the associated variety and discuss its relation to 
projective dimension.  
 
Finally, in  Section~\ref{Sect::6}  we investigate the 
projective dimensions of structural modules in the 
parabolic category $\mc O^\fp$ for $\pn$  and the ortho-symplectic Lie 
	superalgebra $\mf{osp}(2|2n)$. 
\vspace{3mm} 
 
\noindent
{\bf Acknowledgments.} The first author is partially supported by MoST grant
108-2115-M-008-018-MY2. For the second author the research was 
partially supported by the Swedish Research Council
and G{\"o}ran Gustafssons Stiftelse.  We thank Kevin Coulembier for helpful comments.

\section{Preliminaries} \label{Sect::pre}

Throughout, we let   $\mf g = \mf g_\oo\oplus \mf g_{\ob}$   
be a finite-dimensional classical Lie superalgebra. 
This means that $\mf g_\oa$ is a reductive Lie algebra 
and  $\mf g_\ob$ is a completely reducible $\mf g_\oa$-mo\-du\-le 
under the adjoint action.
  
For a given classical Lie superalgebra, we fix triangular decomposition 
\begin{align} \label{eq::tridec}
& \mf g_\oa =\mf n_\oa \oplus \mf h_\oa \oplus \mf n^-_\oa
\quad\text{ and }\quad
\mf g =\mf n \oplus \mf h \oplus \mf n^-
\end{align}   
of $\mf g_\oa$ and of $\g$, respectively, in the sense of 
\cite[Section~2.4]{Ma}. Here  
$\mf b_\oa:=\mf n_\oa \oplus \mf h_\oa$ and 
$\mf b:=\mf n \oplus \mf h$ are  Borel subalgebras in 
$\mf g_\oa$ and $\mf g$, respectively. In the 
present paper we assume that the Cartan subalgebra 
$\mf h$ is purely  even, i.e., $\mf h=\mf h_\oa$.  
We denote by $\displaystyle \mf g=\bigoplus_{\alpha\in \h^\ast}\mf g^\alpha$
the root space decomposition of $\mf g$ with respect to $\mf h^\ast$.
We denote by $W$ the Weyl group of 
$\mf g_\oa$.  The usual dot-action of $W$ on 
$\mf h^\ast$ is defined as 
$$w\cdot \la := w(\la+\rho_\oa)-\rho_\oa,~\text{for any }w\in W,~\la \in \h^\ast,$$ where $\rho_\oa$ is the half of the sum of all 
positive roots of $\mf g_\oa$. 
For a given weight $\mu \in \h^\ast$, we let $W_\mu$ denote 
the stabilizer of $\mu$ under the dot-action of $W$.    
Let $\ell: W\rightarrow \mathbb{Z}_{\geq 0}$ 
denote the usual length function. 
  
Finally, a weight is called {\em integral},
{\em dominant} or {\em anti-dominant} if it is
integral, dominant or anti-dominant as a $\mf g_\oa$-weight,
respectively. 
  
Finally, we denote by $\Phi^+, \Phi^+_\oa$ and $\Phi^+_\ob$ the sets of positive roots, even positive roots and odd positive roots with respect to the triangular decomposition \eqref{eq::tridec}, respectively. In other words, these are exactly the non-zero 
weights of $\mf b$, $\mf b_\oa$ and $\mf b_\ob$, respectively.
  
\subsection{BGG category $\mc O$}
  
Associated to the triangular decomposition \eqref{eq::tridec} 
above, the BGG category $\mc O  $ (resp. $\mc O_\oa$)
is defined as the full subcategory of the category of
finitely generated  $\g$-modules (resp. $\mf g_\oa$-modules)  
which are semisimple over $\mf h$ and locally $U(\fn^+)$-finite 
(resp. $U(\fn^+_\oa)$-finite), see \cite{BGG, Hu08}.
  
For $\la \in \h^\ast$, the corresponding Verma modules
over $\mf g_\oa$ and $\mf g$ are defined as  
$$\Delta_{\oa}(\lambda)=U(\fg_{\oa})\otimes_{U(\fb_{\oa})}\mC_{\la}\quad\text{ and }\quad\Delta(\la) := U(\mf g)\otimes_{U(\mf b)}   \mC_{\la},$$
respectively. Here $\mC_{\la}$ is the one-dimensional module over
the Borel subalgebra corresponding to $\lambda$.
The unique simple quotients of $\Delta_\oa(\la)$ and 
$\Delta(\la)$ are denoted by $L_\oa(\la)$ and $L(\la)$, respectively. 
The corresponding dual Verma modules $\nabla_\oa(\la)$ and 
$\nabla(\la)$ are similarly defined as coinduced modules,
see \cite[Subsection~3.2]{Hu08} and 
\cite[Definition~3.2]{CCC} for more details.  
We note that $L(\la) \hookrightarrow \nabla(\la).$ Also, we denote by $T_\oa(\la)$ and $T(\la)$ the tilting modules of highest weight $\la$ in $\mc O_\oa$ and $\mc O$, respectively, see \cite[Section 4.3]{Ma} and \cite[Theorem 3.5]{CCC} for more details.
  
Let us denote by $\Res:=\Res_{\mf g_\oa}^\g$,  
$\Ind:=\Ind_{\mf g_\oa}^\g$ and $\Coind:=\Coind_{\mf g_\oa}^\g$ the
corresponding restriction, induction and coinduction functors,
respectively. It is well-known, see \cite[Theorem 2.2]{BF}  
and \cite{Gor1}, that there is an isomorphism of functors 
\[\Ind \cong \Coind(  \Lambda^{\text{top}}\mf g_\ob\otimes-).\] 
\vskip 0.2cm
  
\subsection{Lie superalgebras of type I} \label{subsect::21}

A classical Lie superalgebra   $\mf g = \mf g_\oo\oplus \mf g_{\ob}$  
is said to be of {\em type I} if $\mf g$ has a compatible 
$\mathbb{Z}$-grading $\mf g= \mf g_{-1}\oplus \mf g_0 \oplus \mf g_{1}$
such that $\mf g_0 =\mf g_\oo$ and  
$\mf g_\ob = \mf g_1 \oplus \mf g_{-1}$, where $\mf g_{\pm 1}$ 
are $\mf g_\oo$-submodules of $\mf g_\ob$ with 
$[\mf g_{1},\mf g_{ 1}] =[\mf g_{-1},\mf g_{-1}]  =0$. 
We set $\mf g_{\geq 0}: = \mf g_0 \oplus \mf g_1$ and 
$\mf g_{\leq 0}: = \mf g_0 \oplus \mf g_{-1}$.
 
\subsubsection{Triangular decompositions for $\mf g$ of type I} 

For classical Lie superalgebras $\mf g$ of type I considered 
in the present paper, we always assume that the triangular 
decomposition in \eqref{eq::tridec} has the property 
\begin{align}
&\mf b = \mf b_\oa \oplus \mf g_1, \label{eq::typeItridec}
\end{align}   
that is, $\mf b_\ob=\g_1$. 

In particular, we are interested in the following  
superalgebras from Kac's list (cf. \cite{Ka}): 
\begin{align} 
&(\text{Type \bf A}): ~\mf{gl}(m|n), \mf{sl}(m|n) 
\text{ and } \mf{sl}(n|n)/\mathbb{C}I_{n|n};\label{cLIa} \\
&(\text{Type \bf C}): ~\mf{osp}(2|2n); \label{cLIc} \\
&(\text{Type \bf P}): ~\pn\text{ and } [\pn, \pn]; \label{cLIp}
\end{align} 
where $m>n \geq 1$ are integers (see, e.g., 
\cite[Chapter 2, 3]{Mu12} and \cite[Section 1.1]{ChWa12}). 
See, Section~\ref{sect::32} and Section~\ref{eq::osp} for more details 
on $\pn$ and $\mf{osp}(2|2n)$.

\subsubsection{Induced and coinduced modules}

For a given object $V \in \mc O_\oa$, we extend $V$ 
trivially to a $\mf g_{\geq 0}$-module  and define 
the {\em Kac module} $$K(V):=\Ind_{\mf g_{\geq 0}}^{\mf g}V.$$ 
This defines an exact functor 
$K({}_-): \mc O_\oa \rightarrow \mc O$ which is called the 
Kac functor. For $\la \in \h^\ast$, we define  
$K(\la) := K(L_\oa(\la))$.  Also, we note that 
$\Delta(\la)  \cong K(\Delta_\oa(\la))$ and  
$\nabla(\la):= \text{Coind}_{\mf g_{\leq 0}}^{\mf g} (\nabla_\oa(\la)).$

\section{Homomorphisms between Verma supermodules} \label{Sect::HomVerSuper}

In  this section  we assume that 
\begin{enumerate}
\item $\mf g$ is a classical  Lie superalgebra of type I 
such that the triangular decomposition in \eqref{eq::tridec} 
satisfies the condition in \eqref{eq::typeItridec}. \label{eq::cond1ch3}
\item There is an   element $d\in \mc Z(\mf g_\oa)$ and a  non-zero scalar $D\in \C$ such that $$[d,x]=Dx,~ \text{for any} ~ x\in \mf g_{-1}.$$  \label{eq::cond2ch3}
\end{enumerate}  
On can check that the superalgebras $\mf{gl}(m|n)$, $\mf{osp}(2|2n)$ and $\mf{pe}(n)$ 
from the list \eqref{cLIa}-\eqref{cLIp} satisfy conditions \eqref{eq::cond1ch3} and \eqref{eq::cond2ch3}. We refer to \cite[Section 4.2]{CM} for a description of the 
elements $d$.

One of the goals in this subsection is to give a necessary and sufficient condition for the homomorphism  
$$K({}_-): \text{Hom}_{\mc O_\oa}(\Delta_\oa(\mu),\Delta_\oa(\la)) \longrightarrow \text{Hom}_{\mc O}(\Delta(\mu),\Delta(\la)),$$ 
where $K({}_-)$ is the Kac functor defined in 
Subsection~\ref{subsect::21}, to be an isomorphism.  
 
\subsection{Kac functor and homomorphisms between Verma supermodules} 
Using the same argument as in the proof of  \cite[Lemma 4.3]{CP}, 
we have the following useful lemma.

\begin{lem}\label{lem::CP43} 
Let $\mu, \nu \in \h^\ast$ be integral weights such that  $[K(\mu):L(\nu)]>0$  and $K(\nu )=L(\nu)$. Then $\mu=\nu$. 
\end{lem}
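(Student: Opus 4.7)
The plan is to use the even central element $d\in \mc Z(\mf g_\oa)$ postulated in condition~(2) above as a $\mathbb{Z}$-grading device on Kac modules. Since $d$ is central in $\mf g_\oa$, it acts by a scalar on every simple $\mf g_\oa$-module; in particular it acts by $\mu(d)$ on $L_\oa(\mu)$ and by $\nu(d)$ on $L_\oa(\nu)$. Combining the PBW isomorphism $\Res K(\mu)\cong \Lambda(\mf g_{-1})\otimes L_\oa(\mu)$ with the commutation relation $[d,x]=Dx$ for $x\in \mf g_{-1}$, a short induction on $k$ shows that $d$ acts on the summand $\Lambda^k(\mf g_{-1})\otimes L_\oa(\mu)$ by the scalar $\mu(d)+kD$. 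Thus the set of $d$-eigenvalues occurring on $K(\mu)$ is the arithmetic progression $\{\mu(d)+kD\mid 0\leqs k\leqs \dim\mf g_{-1}\}$ with non-zero common difference $D$, and the analogous description holds for $K(\nu)=L(\nu)$.

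Because $d$ acts diagonalizably on $K(\mu)$, every $\mf g$-submodule of $K(\mu)$ is automatically a direct sum of its $d$-eigenspaces, so the operation of taking $d$-eigenspaces is exact on the lattice of $\mf g$-submodules of $K(\mu)$. Since $L(\nu)$ occurs as a composition factor of $K(\mu)$, each $d$-eigenvalue occurring on $L(\nu)$ must also occur on $K(\mu)$. Comparing the extreme eigenvalues $\nu(d)$ and $\nu(d)+D\dim\mf g_{-1}$ of $L(\nu)=K(\nu)$ with the corresponding extremes $\mu(d)$ and $\mu(d)+D\dim\mf g_{-1}$ of $K(\mu)$ then forces $\nu(d)=\mu(d)$, no matter what the sign of $D$ is.

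To conclude, I compare the extremal $d$-eigenspaces themselves. The $\mu(d)$-eigenspace of $K(\mu)$ is precisely the $k=0$ summand $L_\oa(\mu)$, while the $\nu(d)$-eigenspace of $L(\nu)=K(\nu)$, which by the previous paragraph coincides with its $\mu(d)$-eigenspace, equals $L_\oa(\nu)$. The exactness of the $d$-eigenspace functor noted above then identifies $L_\oa(\nu)$ with a $\mf g_\oa$-subquotient of $L_\oa(\mu)$, and simplicity of $L_\oa(\mu)$ forces $L_\oa(\nu)\cong L_\oa(\mu)$, whence $\mu=\nu$.

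The only obstacle I anticipate is bookkeeping: verifying that the iterated bracket computation does produce the advertised scalar action of $d$ on each summand $\Lambda^k(\mf g_{-1})\otimes L_\oa(\mu)$, and that the endpoint comparison in the second paragraph really does pin down $\nu(d)=\mu(d)$ regardless of the sign of $D$. Once these two routine checks are in place, the conclusion is entirely structural and does not require any further case analysis across the specific superalgebras listed in \eqref{cLIa}--\eqref{cLIp}.
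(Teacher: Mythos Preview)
Your argument is correct. The paper does not spell out a proof of this lemma but merely refers to \cite[Lemma~4.3]{CP}; however, the $d$-eigenspace technique you use is precisely the device the paper itself deploys in the proof of Theorem~\ref{mainthm} immediately afterward (see the paragraph there computing the eigenvalues $c_\mu+kD$ on $\Lambda^k\mf g_{-1}\otimes\Delta_\oa(\mu)$), so your approach is in complete alignment with the paper's methods and almost certainly with the cited argument as well.

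One small remark on presentation: your ``endpoint comparison'' phrasing suggests an ordering on eigenvalues, but $\mu(d),\nu(d)\in\C$ need not be real. The cleaner way to say it is that both eigenvalue sets are arithmetic progressions of the same length $\dim\mf g_{-1}+1$ with the same nonzero step $D$, and one is contained in the other, so they coincide as sets and hence $\nu(d)=\mu(d)$. This is what your argument really uses, and it is independent of any sign considerations.
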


Our  first main result is the following. 

\begin{thm} \label{mainthm}
Let $\la\in \h^\ast$ be integral. Then the following 
conditions are equivalent:
\begin{itemize}
\item[(1)] The Kac module $K(\uv \la)$ is simple, where $\uv \la \in W\cdot \la$ is anti-dominant.
\item[(2)] For any $\mu \in \h^\ast$, every non-zero homomorphism 
$f:\Delta(\mu)\rightarrow \Delta(\la)$ is an embedding. 
\item[(3)] For any $\mu \in \h^\ast$, the Kac functor $K({}_-)$ 
gives rise to an isomorphism,
\begin{align*}
&K({}_-): \emph{Hom}_{\mc O_\oa}(\Delta_\oa(\mu),\Delta_\oa(\la)) \xrightarrow{\cong} \emph{Hom}_{\mc O}(\Delta(\mu),\Delta(\la)).\end{align*}  
\end{itemize} 
\end{thm}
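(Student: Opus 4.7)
My plan is to establish the cycle $(3)\Rightarrow (2)\Rightarrow (1)\Rightarrow (3)$, with the last implication being the deepest.

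The implication $(3)\Rightarrow (2)$ is immediate from exactness of $K$: if (3) holds, a nonzero $f\colon \Delta(\mu)\to\Delta(\la)$ is of the form $K(g)$ for a nonzero $g\colon \Delta_\oa(\mu)\to \Delta_\oa(\la)$, and $g$ is injective by the classical BGG theorem, so $f=K(g)$ is injective. For $(2)\Rightarrow (1)$ I argue contrapositively. Assume $K(\uv\la)$ is not simple. Since $\uv\la$ is anti-dominant, $\Delta(\uv\la)=K(\Delta_\oa(\uv\la))=K(\uv\la)$, and applying $K$ to a BGG embedding $L_\oa(\uv\la)\hookrightarrow \Delta_\oa(\la)$ produces an embedding $K(\uv\la)\hookrightarrow \Delta(\la)$. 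Because the weight $\uv\la$ has multiplicity one in $K(\uv\la)$, any simple submodule $L(\nu)\subseteq K(\uv\la)$ satisfies $\nu\ne\uv\la$; Lemma~\ref{lem::CP43} then yields $K(\nu)\ne L(\nu)$, so $\Delta(\nu)=K(\Delta_\oa(\nu))$ surjects onto the non-simple $K(\nu)$ and is therefore itself non-simple. Composing $\Delta(\nu)\twoheadrightarrow L(\nu)\hookrightarrow K(\uv\la)\hookrightarrow \Delta(\la)$ gives a nonzero, non-injective map into $\Delta(\la)$, contradicting (2).

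For $(1)\Rightarrow (3)$, injectivity of the map induced by $K$ follows from exactness: evaluation at the canonical generator of $\Delta(\mu)$ recovers $g$ evaluated at the canonical generator of $\Delta_\oa(\mu)$, so $K(g)=0$ forces $g=0$. For surjectivity I plan to use the grading element $d\in\mc Z(\mf g_\oa)$ from the running assumptions: since $d$ is central in the reductive algebra $\mf g_\oa$ it lies in $\mf h$, and the Jacobi identity applied to $[d,[x,y]]=0$ with $x\in\mf g_{-1}$, $y\in\mf g_1$ forces $[d,y]=-Dy$. Consequently $d$ acts on the summand $\Lambda^k\mf g_{-1}\otimes \Delta_\oa(\la)$ of $\Delta(\la)$ by the scalar $\la(d)+kD$, and any $f\colon \Delta(\mu)\to\Delta(\la)$ sends the canonical generator, which has $d$-eigenvalue $\mu(d)$, into the unique summand with $k=(\mu(d)-\la(d))/D$. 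When $k=0$, the image sits in $1\otimes\Delta_\oa(\la)$ and defines $g\colon \Delta_\oa(\mu)\to \Delta_\oa(\la)$ with $K(g)=f$, as desired. When $k\ge 1$, I must show no nonzero $f$ exists: the simple top $L(\mu)$ of $\mathrm{im}(f)$ would be a composition factor of $\Delta(\la)$ of $d$-eigenvalue strictly greater than $\la(d)$, and I claim condition (1) forbids this.

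The main obstacle is this final claim: under (1), every composition factor $L(\mu)$ of $\Delta(\la)$ satisfies $\mu(d)=\la(d)$. To prove it, I plan to show that $K(\nu)=L(\nu)$ for every $\nu\in W\cdot\uv\la$; combined with the Kac filtration of $\Delta(\la)=K(\Delta_\oa(\la))$, whose sections are $K(\nu)$ as $\nu$ runs over composition factors of $\Delta_\oa(\la)$, this forces every composition factor of $\Delta(\la)$ to be $L(\nu)$ for some $\nu\in W\cdot\uv\la$, and hence of $d$-eigenvalue $\uv\la(d)=\la(d)$. Propagating Kac-simplicity from $K(\uv\la)$ to the full $W$-orbit is the genuinely delicate point; the strategy is to combine Lemma~\ref{lem::CP43} with the case-by-case invariance of Kac-simplicity across the integral linkage class --- via typicality of the central character for the families~\eqref{cLIa} and \eqref{cLIc}, and via a dedicated structural argument in the periplectic case~\eqref{cLIp}.
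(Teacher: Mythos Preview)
Your arguments for $(3)\Rightarrow(2)$ and $(2)\Rightarrow(1)$ are fine and close to the paper's. The problem is in $(1)\Rightarrow(3)$, specifically your plan to rule out the case $k\ge 1$ by proving that every composition factor $L(\mu)$ of $\Delta(\la)$ satisfies $\mu(d)=\la(d)$, which you propose to obtain by propagating Kac-simplicity from $K(\uv\la)$ to the whole orbit $W\cdot\la$. For the periplectic family this claim is simply false: for $\pn$ condition~(1) holds for \emph{every} integral $\la$, so your claim would force all Kac modules to be simple, contradicting the existence of atypical weights. Concretely, for $\mf g=\mf{pe}(2)$ and $\la=0$ one has $\uv\la=-\ep_1+\ep_2$ and $K(\uv\la)$ is simple, but $K(0)$ is not: the element $Y\in\g_{-1}$ satisfies $X_{\ep_1+\ep_2}Y\cdot 1=(-H_1+H_2)\cdot 1=0$, so $L(-\ep_1-\ep_2)$ sits in the socle of $K(0)$, and $(-\ep_1-\ep_2)(d)=-2\ne 0=\la(d)$. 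Thus $\Delta(0)$ has a composition factor with the wrong $d$-eigenvalue, yet no nonzero map $\Delta(-\ep_1-\ep_2)\to\Delta(0)$ exists. Your ``dedicated structural argument'' cannot be supplied because the intermediate claim is false.

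The paper avoids this by not analyzing composition factors at all. Instead it invokes the structural fact that $\mathrm{soc}\,\Delta(\la)=L(\uv\la)$ (from \cite{CCM}). Any nonzero $f\colon\Delta(\mu)\to\Delta(\la)$ then has $L(\uv\la)\subseteq\mathrm{im}(f)$, so $[\Delta(\mu):L(\uv\la)]>0$; the Kac filtration of $\Delta(\mu)$ gives some $\zeta\in W\cdot\mu$ with $[K(\zeta):L(\uv\la)]>0$, and Lemma~\ref{lem::CP43} (using only the simplicity of $K(\uv\la)$) forces $\zeta=\uv\la$, hence $\mu\in W\cdot\la$ and $c_\mu=c_\la$. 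From here your $d$-eigenspace argument works exactly as you outlined: $\Res(f)$ maps $\Delta_\oa(\mu)$ into $\Delta_\oa(\la)$, and one finishes by restricting to this degree-zero piece. The moral is that the correct input is not ``all Kac modules in the orbit are simple'' but rather ``the socle of $\Delta(\la)$ is the single simple $L(\uv\la)$'', which is exactly calibrated to Lemma~\ref{lem::CP43}.
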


\begin{proof} 
In the proof we regard $\Delta_\oa(\la)$ and $\Delta_\oa(\mu)$ 
as direct summands of $\Res \Delta(\la)$ and 
$\Res \Delta(\mu)$, respectively.
	
We first prove $(1)\Rightarrow (3)$. Using the exactness of 
Kac functor, the characters of $\Delta(\la)$ and $\Delta(\mu)$ 
are given as follows:
\begin{align*}
&\text{ch}\Delta(\la) = \sum_{\zeta} [\Delta_\oa(\la): L_\oa(\zeta)] \text{ch}K(\zeta),\qquad
\text{ch}\Delta(\mu) = \sum_{\zeta} [\Delta_\oa(\mu): L_\oa(\zeta)] \text{ch}K(\zeta).
\end{align*}  
This implies that there exists $\zeta \in W\cdot \mu$ such that 
$[K(\zeta):L(\uv \la)]>0$ since $f$ is non-zero and 
$\text{soc}(\Delta(\la)) =L(\uv \la)$  by \cite[Theorem~51]{CM}. 
	
Since $d\in \mc Z(\mf g_\oa)$, the element $d$ acts on 
$\Delta_\oa(\la)$ and $\Delta_\oa(\mu)$ via some scalars,
say $c_\la$ and $c_\mu$, respectively. From Lemma~\ref{lem::CP43},
we conclude that $\zeta =\uv \la$ since 
$[K(\zeta):L(\uv \la)]>0$  and $K(\uv \la )=L(\uv \la)$. 
Therefore $\mu \in W\cdot \la$ and hence $c_\la =c_\mu$.
	
Applying the restriction functor to $f$, we obtain   a 
$\g_\oa$-homomorphism  
$$\Res(f): \Lambda^{\bullet} \mf g_{-1} \otimes \Delta_\oa(\mu) \rightarrow \Lambda^{\bullet} \mf g_{-1}\otimes \Delta_\oa(\la).$$ 
We now consider the eigenspaces with respect to the operator 
$d$.  Observe that, for any $k\geq 0$, the subspace  
$\Lambda^{k} \mf g_{-1}\otimes \Delta_\oa(\mu)$
of $\Res \Delta(\mu)$ is the eigenspace for $d$ 
corresponding to the eigenvalue $c_\mu+kD$. Similarly, the subspace
$\Lambda^{k} \mf g_{-1}\otimes \Delta_\oa(\la)$  
of $\Res \Delta(\lambda)$ is the eigenspace for $d$  
corresponding to the with eigenvalue $c_\la+kD$. Therefore, we have
\begin{align} \label{eq::11}
0\neq \Res(f)(\Delta_\oa(\mu)) \subseteq \Delta_\oa(\la)
\end{align} 
since $f\neq 0$ and $c_\la =c_\mu.$ Consequently, we may conclude that 
\begin{align} \label{eq::12}
\Res(f)\bigg{|}_{\Delta_\oa(\mu)}: \Delta_\oa(\mu) \rightarrow  \Delta_\oa(\la)
\end{align} 
is an embedding by \cite[Theorem~7.6.6]{Di}. 
	
Let $K$ be the kernel of $f$. If we assume that $f$ is not an 
embedding, then we have  $K \supseteq \text{soc}(\Delta(\mu))$ by 
\cite[Theorem~51]{CCM}. This gives 
\begin{align}
&K \supseteq \text{soc}(\Delta(\mu)) = 
U(\mf g)\cdot (\Lambda^{\text{top}}\mf 
g_{-1}\otimes \text{soc}(\Delta_\oa(\mu))),
\end{align} 
which means that 
\[\Lambda^{\text{top}}\mf g_{-1}\otimes \Res(f)(\text{soc}
(\Delta_\oa(\mu)))=f(\Lambda^{\text{top}}\mf g_{-1}\otimes 
\text{soc}(\Delta_\oa(\mu))) =0,\] 
a contradiction to \eqref{eq::11}. Note that this proves the
direction  $(1)\Rightarrow (2)$. We proceed, however, with 
the proof of the direction  $(1)\Rightarrow (3)$. 
	
Observe that the Kac functor gives rise to a 
monomorphism 
$$K({}_-): \text{Hom}_{\mc O_\oa}(\Delta_\oa(\mu),\Delta_\oa(\la)) \rightarrow \text{Hom}_{\mc O}(\Delta(\mu),\Delta(\la)).$$  
It suffices to show that, for given non-zero 
$f,g\in \text{Hom}_{\mc O}(\Delta(\mu),\Delta(\la))$, 
there exists $\alpha \in \C$ such that $f=\alpha g.$ 
By Equation~\eqref{eq::12}, both $f$ and $g$ restrict 
to non-zero $\mf g_\oa$-homomorphisms  
$\Res(f)\bigg{|}_{\Delta_\oa(\mu)}$ and 
$\Res(g)\bigg{|}_{\Delta_\oa(\mu)}$ from  $\Delta_\oa(\mu)$ 
to  $\Delta_\oa(\la)$, respectively. From \cite[Theorem~7.6.6]{Di}
we have that there 
exists $\alpha \in \C$ such that  
$$\Res(f)\bigg{|}_{\Delta_\oa(\mu)} =\alpha \Res(g)\bigg{|}_{\Delta_\oa(\mu)}.$$
This proves the part $(1)\Rightarrow(3)$   since 
$\Delta(\mu)$ is generated by $\Delta_\oa(\mu)$ as a $\mf g$-module. 
	
Note that the implication $(3)\Rightarrow (2)$ is clear. 
It remains to prove the implication $(2)\Rightarrow (1)$,
so we now assume  $(2)$. 
We set $L(\nu):=\text{soc}(K(\uv \la))$. Then we have the map 
$\Delta(\nu) \onto L(\nu) \hookrightarrow K(\uv \la)= \Delta(\uv \la)$.
Since this map is an embedding by $(2)$, it follows that
$\Delta(\nu) = L(\nu)  \hookrightarrow K(\uv \la)= \Delta(\uv \la)$. 
By Lemma \ref{lem::CP43}, from $[K(\uv \la):L(\nu)]\neq 0$ and 
$K(\nu) =L(\nu)$, we deduce that $\uv \la = \nu$. 
This gives $K(\uv \la) =L(\uv \la)$ and  completes the proof. 
\end{proof}

\subsection{Example: The periplectic Lie superalgebras $\pn$} \label{sect::32} 
\subsubsection{} \label{sect::321}
For positive integers $m,n$, the general linear Lie superalgebra $\mathfrak{gl}(m|n)$ 
can be realized as the space of $(m+n) \times (m+n)$ complex matrices
\begin{align*}
\left( \begin{array}{cc} A & B\\
C & D\\
\end{array} \right),
\end{align*}
where $A,B,C$ and $D$ are $m\times m, m\times n, n\times m, n\times n$ matrices,
respectively. The Lie bracket of $\gl(m|n)$ is given by the super commutator. 
Let $E_{ab}$, for $1\leq a,b \leq m+n$ be the elementary matrix in $\mathfrak{gl}(m|n)$. 
Its $(a,b)$-entry  is equal to $1$ and all other entries are $0$.

\subsubsection{}

The standard matrix realization of the periplectic Lie superalgebra 
$\pn$ inside the general linear Lie superalgebra 
$\mathfrak{gl}(n|n)$ is given by
\begin{align}\label{plrealization}
\mf g= \mf{pe}(n):=
\left\{ \left( \begin{array}{cc} A & B\\
C & -A^t\\
\end{array} \right)\| ~ A,B,C\in \C^{n\times n},~\text{$B^t=B$ and $C^t=-C$} \right\}.
\end{align}
Throughout the present paper, we fix the Cartan subalgebra $\mf h= \mf h_\oa \subset \mf g_\oo$ consisting of diagonal matrices. We denote the dual basis of $\mf h^*$ by $\{\vare_1, \vare_2, \ldots, \vare_n\}$ with respect to the standard basis of $\mf h$ defined as
\begin{align}
\{H_i:=E_{i,i}-E_{n+i,n+i}|~1\leq i \leq n \}\subset \pn, \label{eq::cartan}
\end{align}  
where $E_{a,b}$ denotes the $(a,b)$-matrix unit, 
for $1\leq a,b \leq 2n$.
The sets $\Phi_\oa$, $\Phi_\ob^\pm$ and $\Phi_\ob$ of even,
odd positive, odd negative and odd roots are, respectively,  given by
\begin{align}\label{eqroots}
&\Phi_\oa=\{\epsilon_i-\epsilon_j\,|\, 1\le i\not=j\le n\}, \\
&\Phi^+_\ob=\{\epsilon_i+\epsilon_j\,|\, 1\le i\le j\le n\} ,\\
&\Phi^-_\ob=  \{- \epsilon_i-\epsilon_j\,|\, 1\le i< j\le n\},\\  
&\Phi_\ob:=\Phi_\ob^+\cup \Phi_\ob^-.
\end{align} 

The space $\mf h^\ast = \bigoplus_{i=1}^n\C \vare_i$ is equipped with 
a natural bilinear form  
$\langle \vare_i , \vare_j \rangle =\delta_{i,j}$, for any 
$1\leq i,j \leq n$. The Weyl group $W=\mf S_n$   is the symmetric 
group    acting on $\h^\ast$.  For any $\alpha\in\Phi_\oa$, we let 
$s_\alpha$ denote the corresponding reflection in $W$.  

\subsubsection{} 

We  fix the Borel subalgebra $\fb_{\oa}$ of $\g_{\oa}=\mathfrak{gl}(n)$
consisting of matrices in \eqref{plrealization} with $B=C=0$ and
$A$ upper triangular. 

The subalgebras $\mf g_1$ and $\mf g_{-1}$ are given by 
\begin{align*}
\mf g_1:=
\{\begin{pmatrix}
0 & B \\
0 & 0
\end{pmatrix}|B^t=B\}\quad\mbox{and}\quad \mf g_{-1}:=
\{\begin{pmatrix}
0 & 0 \\
C & 0
\end{pmatrix}|C^t=-C\}.
\end{align*}
We note that $\Phi_\ob^+$ and $\Phi_\ob^-$ are the sets of 
weights of the $\mf g_0$-modules $\mf g_1$ and $\mf g_{-1}$, respectively. 

The Borel subalgebra is defined as $\mf b:=\mf b_\oa \oplus \g_{1}$, while the reverse Borel subalgebra $\mf b^r$ is defined as 
$\mf b^r:=\mf b_\oa \oplus \mf g_{-1}.$  It can be shown 
that $\mf b^r$ is still a Borel subalgebra in thee sense 
of \cite[Section~2.4]{Ma} (see, e.g., \cite[Section~5]{CCC}).
 
Furthermore, we let $L^{r}(\la)$ be the irreducible module 
in the corresponding ``reversed'' category $\mc O$ of $\mf b^r$-highest weight $\la$. 
We let  $\la^+\in \h^\ast$ be uniquely determined by
\begin{align}
&L(\la^+) =L^r(\la). \label{eq::oddref}
\end{align}
It is known that the  weight $\la^+$  can be computed using 
odd reflection, and we refer to \cite[Section~2.2]{PS89} and \cite[Theorem 3.6.10]{Mu12}
for a treatment of odd reflections for the periplectic Lie superalgebras $\pn$. 
In particular,  the effect on the highest weight of a 
simple $\pn$-module under odd reflection and inclusion 
was computed in \cite[Lemma~1]{PS89}.

Finally, we set  $\displaystyle X:=\bigoplus_{i=1}^n\mZ\ep_i$;  
$\omega_k:=\vare_1+\cdots +\vare_k$, for any $1\leq k\leq n$; 
and also $\eta:=(1-n)(\vare_{1}+\cdots +\vare_{n})$. 
Then we have $\Lambda^{\text{top}}\mf g_{-1}\cong \C_{\eta}$ 
as a $\mf g_\oa$-module. 

\subsubsection{}

Let $\la, \mu\in \h^\ast$ be two integer weights.
If there are  positive roots 
$$\alpha_1,\alpha_2,\ldots, \alpha_k\in \Phi_\oa^+,$$ 
such that 
$\la =s_{\alpha_k} s_{\alpha_{k-1}}\cdots s_{\alpha_1}\cdot \mu $ 
and, for each $1\leq q \leq k-1$, we have  
\[\langle s_{\alpha_q}s_{\alpha_{q-1}}\cdots s_{\alpha_1}\cdot \mu,~ \alpha_{q+1} \rangle \geq 0, \] 
then  we write $\mu \uparrow \la$, cf. \cite[Subsection~7.6]{Di}.

\begin{prop} \label{matincoro0}
Let $\mf g=\pn$ and $\la \in \h^\ast$ be an integral weight.  
\begin{itemize} 
\item[(1)] For any integral weight $\mu\in \h^\ast$, we have that  
$$\emph{Hom}_{\mc O}(\Delta(\mu),\Delta(\la)) \cong \begin{cases}
\C, \emph{ if } \mu \uparrow \la,\\
0, \emph{ otherwise. }
\end{cases}$$
\item[(2)] A weight vector $v\in \Delta(\la)$ satisfies $\mf nv=0$ 
if and only if $$v=1\otimes v_0\in  U(\mf g) \otimes_{\mf g_{\geq 0}} 
\Delta_\oa(\la) \equiv  \Res \Delta(\la),$$ 
for some vector $v_0$ satisfying $\mf n_\oa v_0=0$. 
\end{itemize} 
\end{prop}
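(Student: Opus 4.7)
The plan is to derive both parts from Theorem~\ref{mainthm} applied to $\mf g=\pn$, combined with the classical theorem on Verma-to-Verma homomorphisms over $\gl(n)$ (see \cite[Theorem~7.6.6]{Di}). As indicated at the start of Section~\ref{Sect::HomVerSuper}, $\pn$ satisfies the two structural hypotheses \eqref{eq::cond1ch3} and \eqref{eq::cond2ch3} of Theorem~\ref{mainthm}, so the task reduces to verifying any one of the three equivalent conditions of that theorem for every integral $\la$.

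My plan for part~(1) is to verify condition~(1) of Theorem~\ref{mainthm}, namely the simplicity of $K(\uv\la)$ where $\uv\la\in W\cdot\la$ is anti-dominant. Since $\uv\la$ is $\gl(n)$-anti-dominant, one has $\Delta_\oa(\uv\la)=L_\oa(\uv\la)$, hence $K(\uv\la)=\Delta(\uv\la)$. Combined with $\text{soc}\,\Delta(\uv\la)=L(\uv\la)$ from \cite[Theorem~51]{CM}, the required simplicity becomes the identity $\Delta(\uv\la)=L(\uv\la)$ at the anti-dominant end of the orbit, a specific structural property of $\pn$ that I would extract from the socle-filtration analysis of $\pn$-Verma modules in \cite{CM, CCC}. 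Once this input is in place, Theorem~\ref{mainthm}(3) identifies $\Hom_{\mc O}(\Delta(\mu),\Delta(\la))$ with $\Hom_{\mc O_\oa}(\Delta_\oa(\mu),\Delta_\oa(\la))$, and the classical $\gl(n)$ theorem yields precisely the dichotomy in~(1) in terms of $\mu\uparrow\la$.

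Part~(2) is then a consequence of part~(1) together with Theorem~\ref{mainthm}. The ``if'' direction is immediate from the construction of the Kac module: vectors of the form $1\otimes v_0$ lie in $1\otimes\Delta_\oa(\la)$, on which $\mf g_1$ acts trivially (because $\Delta_\oa(\la)$ was extended trivially to $\mf g_{\geq 0}$) while $\mf n_\oa$ acts as on $\Delta_\oa(\la)$. Conversely, an $\mf n$-invariant weight vector $v\in\Delta(\la)$ of weight $\mu$ determines a nonzero homomorphism $f\colon\Delta(\mu)\to\Delta(\la)$ carrying the canonical generator to $v$. By Theorem~\ref{mainthm}(3), now available, $f=K(f_0)$ for some $f_0\colon\Delta_\oa(\mu)\to\Delta_\oa(\la)$; evaluating $K(f_0)$ at the highest weight vector of $\Delta(\mu)$ then produces $v=1\otimes v_0$, where $v_0\in\Delta_\oa(\la)$ is the image under $f_0$ of the canonical generator of $\Delta_\oa(\mu)$, which is manifestly $\mf n_\oa$-invariant.

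The main obstacle is the verification of condition~(1) of Theorem~\ref{mainthm} for $\pn$, that is, the simplicity of the anti-dominant Kac module $K(\uv\la)$. Unlike the $\gl(m|n)$-case, this is not forced by any notion of typicality and is not a purely formal consequence of the hypotheses of Theorem~\ref{mainthm}; it requires the concrete analysis of $\pn$-Verma modules at the anti-dominant extreme, which is the genuinely new input needed. Once that single ingredient is supplied, both parts of the proposition follow by a transparent assembly of Theorem~\ref{mainthm} with standard $\gl(n)$ theory.
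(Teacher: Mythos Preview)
Your approach is essentially the same as the paper's: verify condition~(1) of Theorem~\ref{mainthm} for $\pn$ (simplicity of the anti-dominant Kac module), then combine Theorem~\ref{mainthm}(3) with the classical BGG/Verma theorem for $\gl(n)$. The paper pinpoints the needed input as \cite[Lemma~5.11]{CC} rather than the vaguer references to \cite{CM,CCC} you suggest, and for part~(2) it simply invokes Theorem~\ref{mainthm} together with \cite[Theorems~4.2 and 5.1]{Hu08}, whereas you spell out the argument via $f=K(f_0)$ explicitly; otherwise the two proofs coincide.
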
 

\begin{proof}
We recall that $\pn$ admits a grading operator   
$d:=\text{diag$(1,1,\ldots,1)$}\in \g_\oa$ which satisfies 
Condition~\eqref{eq::cond1ch3} at the beginning of 
Section~\ref{Sect::HomVerSuper}. Also, it follows from 
\cite[Lemma~5.11]{CC} that every weight satisfies 
Assumption $(1)$ in Theorem~\ref{mainthm}. Therefore the proof follows 
from Theorem~\ref{mainthm} and \cite[Theorem~4.2, Theorem~5.1]{Hu08}. 
\end{proof}

\begin{rem}
	In \cite{LLW}, Liu, Li and Wang  established a closed formula for a singular vector of weight $\la -\beta$ in the Verma
	module of highest weight $\la$ for Lie superalgebra $\gl(m|n)$ when $\la$ is atypical with respect to an odd positive root $\beta$. In particular, the authors proved the following identity
	\begin{align}
	&\text{Hom}(\Delta(\la -\beta),\Delta(\la))=\C, \label{eq::LLWid}
	\end{align}  where $\Delta$'s denote Verma supermodules for $\g =\gl(m|n)$. This identity was also known in \cite{ChWa18} for the exceptional Lie superalgebra $D(2|1;\zeta)$. By Proposition \ref{matincoro0}, it follows that the identity \eqref{eq::LLWid} fails to hold for $\pn$. 
\end{rem}

\begin{rem}  The Borel subalgebras have been classified  in \cite[Section 5]{CCC}. It would be interesting to determine for which other Borel  subalgebras one has an analogue of
	Proposition \ref{matincoro0}. We note that Proposition \ref{matincoro0} fails in 	the case of the reverse Borel subalgebra $\mf b^r$. To see this, we define Verma supermodules with respect to $\mf b^r$ as follows
	$$\Delta^r(\la):=U(\mf g)\otimes_{\mf b^r}\C_\la \cong \text{Ind}^\g_{\mf g_{\leq 0}}\Delta_\oa(\la),$$
	which admits a filtration with subquotients   $ \text{Ind}^\g_{\mf g_{\leq 0}}L_\oa(\zeta)$, for $\zeta \in \h^\ast$.	By \cite[Proposition 4.15]{CM}, the modules $\text{Ind}^\g_{\mf g_{\leq 0}}L_\oa(\zeta)$ are never simple, for any $\zeta\in \h^\ast$. By similar argument as used in the direction $(3)\Rightarrow (2)$ and $(2)\Rightarrow (1)$ in the proof of Proposition \ref{mathcoro}, it follows that  Proposition \ref{mathcoro} fails for these modules $\Delta^r(\la)$.
\end{rem}

\begin{rem}
Consider a basic Lie superalgebras $\mf g$ of type I, for instance,   
$\mf g=\gl(m|n)$, or $\mf{osp}(2|2n)$. It has been shown in  by Kac \cite{Ka2}
that a weight $\la$ satisfies Assumption (1) in Theorem~\ref{mainthm}  
if and only if $\la$ is typical. Since the notions of strongly 
typical and typical are identical in these cases
 (see, e.g., \cite[Section 2.4]{Gor2} or \cite{Gor3}), the statement 
of Theorem~\ref{mainthm} follows from Gorelik's  theorem, 
see \cite[Theorem 1.3.1, Theorem 1.4.1]{Gor2}.  
\end{rem}

\section{The socle of the cokernel of inclusions of Verma supermodules 
for  $\pn$} \label{Sect::soccokerpn}
 
In this section, we assume that $\mf g=\pn$.  The goal of this 
section is to describe the socle of the cokernel of an arbitrary 
non-zero homomorphism between Verma $\pn$-supermodules. 

Let $\la,\gamma\in \h^\ast$ and
$\Delta(\gamma) \xrightarrow{f} \Delta(\la)$ be a
non-zero homomorphism between the corresponding
Verma supermodules  over $\mf g$.  
By Theorem \ref{mainthm}, there exists a homomorphism 
$\tilde{f}\in \text{Hom}_{\mc O}(\Delta_\oa(\gamma), \Delta_\oa(\la))$ 
such that $K(\tilde{f}) =f$. Applying the Kac functor to the  
short exact sequence   
$$0\rightarrow \Delta_\oa(\gamma) \xrightarrow{\tilde{f}} \Delta_\oa(\la) \rightarrow \Delta_\oa(\la)/\Delta_\oa(\gamma)\rightarrow 0$$
in $\mc O_\oa$,
we obtain the  short exact sequence  
$$0\rightarrow \Delta(\gamma) \xrightarrow{f} \Delta(\la) \rightarrow K(\Delta_\oa(\la)/\Delta_\oa(\gamma))\rightarrow 0$$ 
in $\mc O$. This  gives rise to the isomorphism  
$$\text{soc}(\Delta(\la)/\Delta(\gamma)) \cong 
\text{soc}(K(\Delta_\oa(\la)/\Delta_\oa(\gamma))) ,$$
where we write $\Delta(\la)/\Delta(\gamma)$ for
$\Delta(\la)/f(\Delta(\gamma))$.
	
Also, by Theorem~\ref{mainthm}, there exists a dominant integral 
$\mu$ such that  $\la = x\cdot \mu$ and $\gamma =y\cdot \mu$, 
for some $x,y\in W$ with $x<y$ with respect to the Bruhat order. 
Let 
$$\theta_\mu^{\text{on}}:(\mc O_\oa)_0 \rightarrow (\mc O_\oa)_\mu$$ 
denote the translation functor to the $\mu$-wall, where 
$(\mc O_\oa)_0$ and $(\mc O_\oa)_\mu$ denote the principal 
block and the block containing $L_\oa(\mu)$, respectively.  
For any $ s\in W$, we set  $n_{x, y, z}$ to be the integer 
determined by    
$$\theta_\mu^{\text{on}}\left(\text{soc}(\Delta_\oa(x'\cdot 0)/\Delta_\oa(y'\cdot 0))\right)=\text{soc}(\Delta_\oa(x\cdot \mu)/\Delta_\oa(y\cdot \mu))=\bigoplus_{z\in W}L_\oa(z\cdot \mu)^{\oplus n_{x, y, z}},$$
where $x',y'$ are the shortest representatives in the cosets $xW_\mu$ and 
$yW_\mu$, respectively, see \cite[Proposition~15]{KKM}.  
 
For $\nu \in \h^\ast$ we recall the weight $\nu^+$ defined in
\eqref{eq::oddref}. We are now in a position to state the main 
result of this section which describes the socle of the cokernel 
of homomorphism between Verma supermodules  over $\mf g$.

\begin{thm} \label{2ndmainthm}   
We have the following description of 
the socle of the cokernel of a non-zero homomorphism 
$\Delta(\gamma) \rightarrow \Delta(\la)$:
\[\emph{soc}(\Delta(\la)/\Delta(\gamma))\cong   
\bigoplus_{z\in W}L((z\cdot \mu)^++\eta)^{\oplus n_{x,y,z}}. \]
\end{thm}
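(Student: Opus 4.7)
The plan is to reduce the socle computation to the corresponding one for $\mathfrak{gl}(n)$ using the Kac functor $K$ and then invoke \cite[Proposition~15]{KKM}.

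First, by Theorem~\ref{mainthm}, the nonzero homomorphism $f\colon \Delta(\gamma)\to\Delta(\la)$ equals $K(\tilde f)$ for a (necessarily injective) $\tilde f\colon \Delta_\oa(\gamma)\hookrightarrow \Delta_\oa(\la)$. Applying the exact functor $K$ to the short exact sequence $0\to\Delta_\oa(\gamma)\to\Delta_\oa(\la)\to M\to 0$, with $M:=\Delta_\oa(\la)/\Delta_\oa(\gamma)$, yields
$$0\to\Delta(\gamma)\to\Delta(\la)\to K(M)\to 0,$$
so $\mathrm{soc}(\Delta(\la)/\Delta(\gamma))\cong\mathrm{soc}(K(M))$. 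Writing $\la=x\cdot\mu$ and $\gamma=y\cdot\mu$ for a dominant integral $\mu$ and $x<y$ in the Bruhat order (as in the setup preceding the theorem), the $\mc O_\oa$-socle of $M$ then follows from \cite[Proposition~15]{KKM} combined with exactness of the translation-to-the-$\mu$-wall $\theta_\mu^{\mathrm{on}}$, giving
$$\mathrm{soc}(M)=\bigoplus_{z\in W}L_\oa(z\cdot\mu)^{\oplus n_{x,y,z}}.$$

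The crux of the proof is then the following socle transfer formula, which I would establish for any $V\in\mc O_\oa$:
$$\mathrm{soc}_{\mc O}(K(V))\;\cong\;\bigoplus_\nu L(\nu^+ + \eta)^{\oplus [\mathrm{soc}(V):L_\oa(\nu)]}.$$
By the PBW theorem, $K(V)\cong\Lambda^\bullet\mf g_{-1}\otimes V$ as a $\mf g_\oa$-module, and $\Lambda^{\mathrm{top}}\mf g_{-1}\otimes V$ is exactly the subspace annihilated by $\mf g_{-1}$. Consequently, a $\mf b^r$-highest weight vector of $K(V)$ (i.e., one annihilated by $\mf n_\oa\oplus\mf g_{-1}$) necessarily lies in $\Lambda^{\mathrm{top}}\mf g_{-1}\otimes V^{\mf n_\oa}$. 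Adapting the argument from the proof of Theorem~\ref{mainthm} (which relied on \cite[Theorem~51]{CM}), the $\mf g$-socle of $K(V)$ coincides with $U(\mf g)\cdot(\Lambda^{\mathrm{top}}\mf g_{-1}\otimes\mathrm{soc}(V))$. Each simple summand $L_\oa(\nu)\subseteq\mathrm{soc}(V)$ then produces a simple $\mf g$-submodule of $K(V)$ with $\mf b^r$-highest weight $\nu+\eta$, which by \eqref{eq::oddref} equals $L^r(\nu+\eta)=L((\nu+\eta)^+)$. An odd reflection computation, based on \cite[Lemma~1]{PS89} (see also \cite[Theorem~3.6.10]{Mu12}), identifies $(\nu+\eta)^+=\nu^++\eta$. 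Plugging $V=M$ into the transfer formula gives the claim.

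The main obstacle is the socle transfer formula itself: extending the argument from \cite[Theorem~51]{CM} (used for Verma modules in the proof of Theorem~\ref{mainthm}) to an arbitrary $V\in\mc O_\oa$ requires ruling out simple $\mf g$-submodules of $K(V)$ whose $\mf b^r$-highest weight vectors arise from $\Lambda^{\mathrm{top}}\mf g_{-1}\otimes(V^{\mf n_\oa}\setminus\mathrm{soc}(V))$. A secondary but still delicate point is the identification $(\nu+\eta)^+=\nu^++\eta$, which requires carefully tracking the weight $\eta$ through the chain of odd reflections connecting $\mf b$ with $\mf b^r$ for $\pn$.
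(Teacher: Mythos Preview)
Your approach is essentially identical to the paper's: reduce to $\mathrm{soc}(K(M))$ via exactness of $K$, apply a socle transfer formula for $K(V)$, then identify the simple constituents through $L^r$ and the operation $({-})^+$. The two points you flag as obstacles, however, dissolve as follows.

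First, the socle transfer formula $\mathrm{soc}(K(V)) = U(\mf g)\cdot(\Lambda^{\mathrm{top}}\mf g_{-1}\otimes\mathrm{soc}(V))$ for \emph{arbitrary} $V\in\mc O_\oa$ is precisely \cite[Lemma~3.2]{CM}, which the paper cites directly; no adaptation from the Verma case in Theorem~\ref{mainthm} is required. Your worry about $\mf b^r$-singular vectors arising from $V^{\mf n_\oa}\setminus\mathrm{soc}(V)$ is resolved there: any simple $\mf g$-submodule of $K(V)$ meets the $\mf g_{\leq 0}$-submodule $\Lambda^{\mathrm{top}}\mf g_{-1}\otimes V$ nontrivially, hence meets its $\mf g_\oa$-socle $\Lambda^{\mathrm{top}}\mf g_{-1}\otimes\mathrm{soc}(V)$.

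Second, the identity $(\nu+\eta)^+=\nu^++\eta$ does not require tracking odd reflections. For $\pn$ one has $[\mf g_1,\mf g_{-1}]\subseteq\mf{sl}(n)$ (the trace of $BC$ vanishes when $B$ is symmetric and $C$ antisymmetric), so the trace character $\C_\eta$ of $\mf g_\oa$ extends to a one-dimensional $\mf g$-module with $\mf g_{\pm 1}$ acting trivially. Tensoring with $\C_\eta$ is then an autoequivalence of $\mc O$ taking $L^r(\nu)=L(\nu^+)$ to $L^r(\nu+\eta)=L(\nu^++\eta)$, which is the desired identity. The paper uses this implicitly in its final line.
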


\begin{proof}    
By \cite[Lemma~3.2]{CM} (see also \cite[Theorem 51]{CCM}), we have  \begin{displaymath}
\begin{array}{rcl}
\text{soc}(\Delta(\la )/\Delta(\gamma)) 
&\cong &\text{soc}(K(\Delta_\oa(\la)/\Delta_\oa(\gamma))) \\
&= &U(\mf g)\cdot  \left(\Lambda^{\text{top}} \mf g_{-1}\otimes 
\text{soc}(\Delta_\oa(\la)/\Delta_\oa(\gamma))\right) \\ 
&\cong& \text{head}\,\Ind_{\g_{\leq 0}}^\g 
\left(\bigoplus\limits_{z\in W}
L_\oa(z\cdot 0 +\eta)^{\oplus n_{x,y,z}}\right).
\end{array}
\end{displaymath}
It follows that 
$$\text{soc}(\Delta(\la)/\Delta(\gamma))\cong 
\bigoplus\limits_{z\in W}L^r(z\cdot 0 +\eta)^{\oplus n_{x,y,z}}= 
\bigoplus_{z\in W}L((z\cdot \mu)^++\eta)^{\oplus n_{x,y,z}}.$$  
This completes the proof. 
\end{proof}

\begin{rem}
Let $\la$ be a dominant and regular weight. 
By Theorem~\ref{2ndmainthm}, for any $y\in W$, the quotient 
$\Delta(\la)/\Delta(y\cdot \la)$ has simple socle if and only if 
$y$ is a bigrassmannian permutation, see 
\cite[Section~1.1, Theorem~1.3]{KKM}.
\end{rem}

For an integral weight $\la \in \h^\ast$, we let 
$\ov \la\in W\cdot \la$ and 
$\uv \la\in W\cdot \la$ be the unique dominant and 
the unique anti-dominant element, respectively.   
Theorem~\ref{2ndmainthm} has the following consequence.

\begin{cor} \label{mathcoro} 
Suppose that $\mu \in \h^\ast$ is not anti-dominant. Then we have  
$$\emph{dim Ext}_{\mc O}^1(L(\mu),\Delta(\la)) = 
[\emph{soc}\left(\Delta(\ov \la)/\Delta(\la)\right): L(\mu)].$$
\end{cor}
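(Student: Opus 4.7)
The plan is to derive the identity from the long exact $\mathrm{Ext}$-sequence attached to the canonical short exact sequence
\[0 \to \Delta(\la) \hookrightarrow \Delta(\ov\la) \to Q \to 0,\qquad Q:=\Delta(\ov\la)/\Delta(\la),\]
where the embedding is produced by Proposition~\ref{matincoro0}(1); note that $\la\uparrow\ov\la$ holds, since the classical BGG embedding $\Delta_\oa(\la)\hookrightarrow \Delta_\oa(\ov\la)$ exists in $\mc O_\oa$ by dominance of $\ov\la$. Applying $\text{Hom}_{\mc O}(L(\mu),-)$ produces
\begin{multline*}
0 \to \text{Hom}(L(\mu),\Delta(\la)) \to \text{Hom}(L(\mu),\Delta(\ov\la)) \to \text{Hom}(L(\mu),Q) \\ \to \text{Ext}^1(L(\mu),\Delta(\la)) \to \text{Ext}^1(L(\mu),\Delta(\ov\la)) \to \cdots.
\end{multline*}

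The next step is to kill the two leftmost $\text{Hom}$ terms. By \cite[Theorem~51]{CM} (used already in the proof of Theorem~\ref{mainthm}), the socle of any Verma supermodule $\Delta(\nu)$ over $\pn$ equals $L(\uv\nu)$, and the hypothesis that $\mu$ is not anti-dominant forces $\mu\neq \uv\la = \uv(\ov\la)$. Consequently both $\text{Hom}(L(\mu),\Delta(\la))$ and $\text{Hom}(L(\mu),\Delta(\ov\la))$ vanish. Since $L(\mu)$ is simple with one-dimensional endomorphism ring, the surviving $\text{Hom}$ term satisfies
\[\dim \text{Hom}_{\mc O}(L(\mu), Q) = [\text{soc}(Q) : L(\mu)],\]
which is exactly the right-hand side of the asserted equality.

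The whole proof thus reduces to the vanishing $\text{Ext}^1_{\mc O}(L(\mu),\Delta(\ov\la))=0$, and this I expect to be the main obstacle. The strategy is to show that, for dominant integral $\ov\la$, the Verma supermodule $\Delta(\ov\la)$ coincides with the projective cover $P(\ov\la)$ of $L(\ov\la)$ in the corresponding block of $\mc O(\pn)$, so that all higher $\text{Ext}$'s into it vanish. The key inputs would be: $(i)$ the classical identity $\Delta_\oa(\ov\la)=P_\oa(\ov\la)$ in $\mc O_\oa$ (in the regular dominant case), $(ii)$ the rigidity of the Kac functor for $\pn$ — every Kac module is simple by \cite[Lemma~5.11]{CC}, so $K$ sends simples to simples — and $(iii)$ the adjunction $K\dashv\Res^\g_{\mf g_{\geq 0}}$ combined with exactness of both functors; these should combine to show that $K(P_\oa(\ov\la))=\Delta(\ov\la)$ is projective in $\mc O(\pn)$. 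The singular case can be reduced to the regular one via translation functors. Once $\text{Ext}^1_{\mc O}(L(\mu),\Delta(\ov\la))=0$ is secured, the long exact sequence collapses to the required isomorphism $\text{Hom}(L(\mu),Q)\cong \text{Ext}^1_{\mc O}(L(\mu),\Delta(\la))$, completing the argument.
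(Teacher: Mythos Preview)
Your long exact sequence framework is sound, and the vanishing $\text{Ext}^1_{\mc O}(L(\mu),\Delta(\ov\la))=0$ for non-anti-dominant $\mu$ is indeed the heart of the matter. However, your justification of this vanishing is incorrect: the Verma supermodule $\Delta(\ov\la)$ is \emph{never} projective in $\mc O$ for $\pn$. In fact Proposition~\ref{prop::512} of the paper shows that every Verma supermodule for $\pn$ has infinite projective dimension. The flaw in your adjunction argument is that the right adjoint of the Kac functor $K:\mc O_\oa\to\mc O$ is not $\Res^\g_{\g_{\geq 0}}$ viewed as a functor to $\mc O_\oa$, but rather the $\mf g_1$-invariants functor $M\mapsto M^{\mf g_1}$, which is only left exact; hence $K$ need not preserve projectives. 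The simplicity of Kac modules (your point~(ii)) is irrelevant to projectivity.

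The paper establishes the result by a different route that implicitly contains the needed vanishing. Given a non-split extension $0\to\Delta(\la)\to M\to L(\mu)\to 0$, one observes that $\text{soc}(M)=L(\uv\la)$, so $M$ embeds in the injective envelope $I(\uv\la)=T(\ov\la)$. Using a two-step projective-injective copresentation $0\to\Delta(\ov\la)\to T(\ov\la)\xrightarrow{f}U$ (available by \cite[Theorem~7.3]{AM}), one checks that $f(M)=0$ because all composition factors of $M/\text{soc}(M)$ have non-anti-dominant highest weight while $\text{soc}(U)$ consists only of anti-dominant simples. Thus $M\subseteq\Delta(\ov\la)$ and $L(\mu)$ appears in $\text{soc}(\Delta(\ov\la)/\Delta(\la))$. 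Applied with $\la=\ov\la$ this gives precisely the $\text{Ext}^1$-vanishing you were after; so your overall strategy can be salvaged, but only by importing this tilting-module argument in place of the false projectivity claim.
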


\begin{proof}
Let  
\begin{align} \label{eq::SES1}
&0\rightarrow \Delta(\la) \rightarrow M \rightarrow L(\mu) \rightarrow 0,
\end{align} 
be a non-split short exact sequence. Since it is non-split, 
the socle of $M$ 
coincides with the socle   $L(\uv \la)$ of $ \Delta(\la).$ 
Therefore $M$ is a submodule of the injective envelope 
$I(\uv \la) =T(\ov \la)$. Now, we consider a two step 
projective-injective copresentation of $\Delta(\ov \la)$
(whose existence follows, for example, from \cite[Theorem~7.3]{AM}): 
\[0\rightarrow \Delta(\ov \la) \rightarrow T(\ov \la) \xrightarrow{f} U. \]
By our assumption, $[M/\text{soc}(M): L(\zeta)] >0$ implies that $\zeta$ 
is not antidominant. As a consequence, $f(M)=0$ since the socle 
of the projective-injective module $U$ is a direct sum of simple 
modules of anti-dominant   highest weights. This means that $M$ 
is a submodule of $\Delta(\ov \la)$. Consequently, 
$L(\mu)\cong M/\Delta(\la)$ corresponds to a socle constituent of 
$\Delta(\ov \la)/\Delta(\la)$. This completes the proof.  
\end{proof}

\begin{ex}
We consider $\mf g= \mf{pe}(2).$ Let $\la, \mu\in \h^\ast$ be integral. Suppose that  $\mu$ is not antidominant.  Set $\ov \la =a \vare_2+b\vare_2$. Then, by \cite[Lemma 6.1]{CC}, we have   
\begin{align} 
\text{soc}(\Delta(\ov \la) /\Delta(\la))=
\begin{cases}
L(\ov \la-\omega_2), \text{ if $a =b$.} \\
L(\ov \la), \text{ if $a >b$.}\\
0,\text{ if $\la =\ov \la.$}
\end{cases}
\end{align} 
It follows that  
\begin{align*}
&\text{dim}\,\text{Ext}^1_{\mc O}(L(\mu), \Delta(\la)) = \begin{cases}
1, \text{ if either $\mu =\ov \la-\omega_2$, for $a=b$, or $\mu=\ov \la$, for $a>b$};\\
0, \text{ otherwise.}
\end{cases}
\end{align*}
\end{ex}

\begin{rem}   
Proposition~\ref{2ndmainthm} and Corollary~\ref{mathcoro} can be generalized to other classical Lie superalgebras of type I which 
have type A even part, under the condition that 
$\text{dim}\,\text{Hom}_{\mc O}(\Delta(\gamma), \Delta(\la))\leq 1$.   
\end{rem}

\section{Homological dimensions} \label{Sect::homodim}

Let $\mc A$ be an abelian category. Suppose that $\mc A$ has 
enough projective and injective objects. For any $M\in \mc A$, 
we let $\pd_{\mc A}M$   denote the projective dimension of  $M$. 
We define $\gld\,\mc A$ to be the global dimension of  $\mc A$, namely, 
\[\gld\,\mc A:=\mathop{\text{sup}}\limits_{M \in\mc A} \{\pd_{\mc A} M\}.\]
The finitistic dimension of $\mc A$ is defined as 
\[ \text{fin.dim}\,\mc A : =\mathop{\text{sup}}\limits_{M \in \mc A} \{\pd_{\mc A}M|~\pd_{\mc A}M<\infty \}. \] 
We will simply use $\pd({}_-)$ and $\gld({}_-)$ when the context is clear. Similarly, we have the injective dimension $\id\,M= \id_{\mc A}M$ 
of an object $M\in\mc A$. 
 
In this section, we assume that $\mf g$ is an arbitrary classical Lie superalgebra. 

\subsection{Finitistic dimension of various categories of supermodules}  \label{sect::51}  

We denote by $\mf g$-Mod and $\mf g_\oa$-Mod the category 
of all $\mf g$-modules and the category of all 
$\mf g_\oa$-mo\-du\-les, respectively.  
   
Let $\tilde{\mc C}$ and  ${\mc  C}$ be abelian subcategories of 
$\mf g$-Mod and $\mf g_\oa$-Mod, respectively. Assume that 
\begin{enumerate}[(i)] 
\item $\Ind, \Coind$ and $\Res$ restricts to well-defined functors between ${\mc C}
$ and $\tilde{\mc C}$.  \label{eq::111}
\item $\mc C$ has enough injective and projective modules.  \label{eq::112}
\item $\text{gl.dim}\,\mc C <\infty$.\label{eq::113}
\end{enumerate}

 The following theorem generalizes \cite[Theorem~4.1]{CS} (also see \cite[Lemma 4.3]{CS}) where the case
of the superalgebra $\gl(m|n)$ was considered.
\begin{thm} \label{thm::findimthm}
We have $\emph{fin.dim} \,\tilde{\mc C} = \emph{gl.dim}\,\mc C$.
\end{thm}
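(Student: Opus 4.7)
The plan is to prove both inequalities that comprise the equality, setting $d := \gld\,\mc C$. The foundational input is the PBW theorem: $U(\mf g)$ is a finitely generated free $U(\mf g_\oa)$-module, so both $\Ind$ and $\Coind$ are exact. Combining this with the adjunctions $\Ind \dashv \Res \dashv \Coind$ and the exactness of $\Res$, both $\Ind$ and $\Res$ preserve projectives, while $\Res$ and $\Coind$ preserve injectives. Moreover, PBW yields a $\mf g_\oa$-module isomorphism $\Res \Ind N \cong \bigoplus_{k \geq 0} \Lambda^k \mf g_\ob \otimes N$ (with diagonal adjoint action) for any $N \in \mc C$, exhibiting $N$ as a direct summand of $\Res \Ind N$ in $\mc C$.

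For the lower bound $\text{fin.dim}\,\tilde{\mc C} \geq d$, I would choose $N \in \mc C$ with $\pd_{\mc C} N = d$. Applying the exact, projective-preserving functor $\Ind$ to a length-$d$ projective resolution of $N$ gives $\pd_{\tilde{\mc C}} \Ind N \leq d$. Conversely, applying $\Res$ to a projective resolution of $\Ind N$ produces a projective resolution of $\Res \Ind N$ of the same length in $\mc C$; since $N$ is a $\mc C$-direct summand of $\Res \Ind N$, one obtains
\[
d = \pd_{\mc C} N \leq \pd_{\mc C} \Res \Ind N \leq \pd_{\tilde{\mc C}} \Ind N.
\]
Hence $\pd_{\tilde{\mc C}} \Ind N = d$, witnessing $\text{fin.dim}\,\tilde{\mc C} \geq d$.

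For the upper bound, let $M \in \tilde{\mc C}$ with $\pd_{\tilde{\mc C}} M < \infty$; the goal is $\pd_{\tilde{\mc C}} M \leq d$. Restricting a projective resolution of $M$ to $\mc C$ immediately gives $\pd_{\mc C} \Res M \leq \pd_{\tilde{\mc C}} M < \infty$, hence $\pd_{\mc C} \Res M \leq d$. I then aim for the sharp comparison $\pd_{\tilde{\mc C}} M \leq \pd_{\mc C} \Res M$: setting $k := \pd_{\mc C} \Res M$, the $k$-th syzygy $\Omega^k M$ in $\tilde{\mc C}$ restricts (up to projective summands) to the $k$-th syzygy of $\Res M$ in $\mc C$, which is projective. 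One then analyses the counit short exact sequence $0 \to K \to \Ind \Res(\Omega^k M) \to \Omega^k M \to 0$: the middle term is projective in $\tilde{\mc C}$ (as $\Ind$ preserves projectives), and the PBW decomposition provides a $\mf g_\oa$-linear section of $\Res \Ind \Res(\Omega^k M) \to \Res(\Omega^k M)$ via the $k=0$ summand, forcing $\Res K$ to be projective in $\mc C$. A downward induction on projective dimension (well-founded because $\pd_{\tilde{\mc C}}(\Omega^k M) < \infty$) then converts restricted projectivity into actual projectivity, giving $\pd_{\tilde{\mc C}} \Omega^k M = 0$ and therefore $\pd_{\tilde{\mc C}} M \leq k \leq d$.

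The principal obstacle is this last step: showing that $X \in \tilde{\mc C}$ with $\Res X$ projective in $\mc C$ and $\pd_{\tilde{\mc C}} X < \infty$ must itself be projective in $\tilde{\mc C}$. Restriction does not reflect projectivity in general; here the $\Res$-splitting of the counit provided by PBW, together with the a priori finiteness of the projective dimension of $X$ (which is ensured by $\gld\,\mc C < \infty$), is precisely what must combine to make the reflection hold.
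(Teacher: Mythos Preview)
Your lower bound is correct and essentially the paper's argument: both use that $N$ is a direct summand of $\Res\Ind N$ to obtain $\pd_{\tilde{\mc C}}\Ind N=\pd_{\mc C}N$.

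For the upper bound your route diverges from the paper's, and the sketch has a genuine gap. The ``downward induction on $\pd_{\tilde{\mc C}}X$'' via the counit sequence $0\to K\to\Ind\Res X\to X\to 0$ does not close. Granting that $\Res K$ is projective and that $\pd_{\tilde{\mc C}}K=n-1$, the inductive hypothesis makes $K$ projective; but a short exact sequence with projective kernel and projective middle term only yields $\pd_{\tilde{\mc C}}X\le 1$, not $\pd_{\tilde{\mc C}}X=0$. Nothing you have written forces that sequence to split, so the induction stalls at $n=1$. You flag this step as the ``principal obstacle'' and name the right ingredients, but the assembly you propose (counit applied to $X$) does not resolve it.

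The repair is to dimension-shift in the \emph{second} variable. For arbitrary $Y\in\tilde{\mc C}$, apply the counit to $Y$: from $0\to K_Y\to\Ind\Res Y\to Y\to 0$ and the adjunction isomorphism $\Ext^i_{\tilde{\mc C}}(X,\Ind\Res Y)\cong\Ext^i_{\mc C}(\Res X,\Res Y)=0$ for $i\ge 1$, one obtains $\Ext^i_{\tilde{\mc C}}(X,Y)\cong\Ext^{i+1}_{\tilde{\mc C}}(X,K_Y)$. Iterating $n=\pd_{\tilde{\mc C}}X$ times gives $\Ext^1_{\tilde{\mc C}}(X,Y)\cong\Ext^{n+1}_{\tilde{\mc C}}(X,\,\cdot\,)=0$, so $X$ is projective. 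This uses precisely your two ingredients (counit surjectivity and finiteness of $\pd_{\tilde{\mc C}}X$), just applied to the test object rather than to $X$ itself.

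The paper sidesteps all of this. It invokes the general fact (citing \cite{Ma} and \cite{Co}) that once every injective has finite projective dimension, $\text{fin.dim}\,\tilde{\mc C}$ equals the supremum of $\pd$ over injectives; since every injective in $\tilde{\mc C}$ is a summand of some $\Ind I$ with $I$ injective in $\mc C$, the adjunction gives $\pd_{\tilde{\mc C}}\Ind I\le\pd_{\mc C}I\le\gld\,\mc C$ directly. Your route, once repaired as above, is more self-contained and actually proves the sharper statement $\pd_{\tilde{\mc C}}M=\pd_{\mc C}\Res M$ whenever the left side is finite; the paper's route is shorter but outsources the key reduction.
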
 
\begin{proof} 
Since $\Ind$ is isomorphic to $\Coind$, up to parity
change, it follows that the functors $\Ind$, 
$\Coind$ and $\Res$ send injective
(resp. projective) objects to injective (resp. projective) object.

The category $\tilde{\mc C}$ is a Frobenius extension of $\mc C$ in 
the sense of \cite[Definition~2.2.1]{Co}. Therefore   $\tilde{\mc C}$ 
has enough projective and injective objects by 
\cite[Proposition~2.2.1]{Co}. Moreover, every injective 
(resp. projective) object in 
$\tilde{\mc C}$ is a direct summand of an object induced 
from an injective (resp. projective) object of $\mc C$. 

Consequently, all injective objects in $\tilde{\mc C}$
have finite projective dimension.
By an argument used in the proof   \cite[Theorem~3]{Ma} 
(see also  \cite[Corollary~3.6.10]{Co}), we have that
$\text{fin.dim}\, \tilde{\mc C}$ is the maximum of projective 
dimensions of injective objects in $\tilde{\mc C}$. Note also that the
global dimension of $\mc C$ is the maximum of projective 
dimensions of injective objects in $\mc C$, as well.

We observe that, for any injective 
$I$ in $\mc C$ and $Y\in \tilde{C}$, it follows from the 
Frobenius reciprocity  that 
\[\Ext^d_{\tilde{\mc C}}(\Ind I, Y) =\Ext^d_{\mc C}(I,\Res Y),  \]
which implies that 
$\text{fin.dim} \,\tilde{\mc C}  \leq  \text{gl.dim}\,\mc C$. 
   	
Finally, let $I\in \mc C$ be injective. Then $I$ is a direct
summand of $\Res \Ind I\cong \Lambda^{\bullet} \mf g_\ob \otimes I$, 
which implies that 
\[\pd\, I \geq \pd\, \Ind I \geq \pd \,\Res \Ind I \geq \pd\, I. \] 
This shows that 
$\text{fin.dim} \,\tilde{\mc C}  \geq  \text{gl.dim}\,\mc C$ 
and the claim follows. 
\end{proof}
   
Below we apply Theorem~\ref{thm::findimthm} to various examples.

\begin{ex}
Let  $\tilde{\mc  C}= \mf g$-Mod and ${\mc  C}= g_\oa$-Mod. 
Condition~\eqref{eq::111} is obvious and Conditions~\eqref{eq::112}
and \eqref{eq::113} are clear since $U(g_\oa)$ is noetherian.
We have $\text{gl.dim}\,\mc C =\text{dim}\,\mf g_\oa$.   Therefore, 
by Theorem~\ref{thm::findimthm}, we have 
$$\text{fin.dim}~\mf g\text{-Mod} = \dim \g_\oa.$$ 
\end{ex}
   
\begin{ex}
Consider the super analog $\mc C(\mf g,\h)$ of the category 
$\mc C(\mf g_\oa,\h)$ of weight $\mf g$-modules from \cite{Zu}, 
that is, $\mc C(\mf g,\h)$ the full subcategory of $\mf g$-Mod 
consisting of $\mf g$-modules which are semisimple over $\mf h$.

For $\la \in \h^\ast$, let $I_\la$ be the left ideal of 
$U(\mf g)$ generated by $h-\la(h)$, for $h\in\mf h$.
Then the quotient of $U(\mf g)$ by $I_\la$ 
is projective in $\mc C(\g,\h)$. Also, every weight $\mf g$-module 
is a quotient of a direct sum of the $I_\la$'s. Therefore 
$\mc C(\mf g_\oa, \h)$ has enough projective objects. By 
\cite[Lemma 2.2]{Zu}  $\mc C(\mf g_\oa,\mf h)$ has 
enough injective objects as well. 
   	
Let $M\in \mc C(\mf g_\oa,\h)$. By \cite[Corollary 3.1.8]{Ku}, 
the trivial module $\C$ in $\mc C(\mf g_\oa,\h)$ has the following
finite projective resolution  in $\mc C(\mf g_\oa,\h)$: 
\[\cdots  \rightarrow D_2\rightarrow D_1 \rightarrow D_0 \rightarrow \C \rightarrow 0, \]
where $D_p$ is the induced module  $U(\mf g_\oa)\otimes_{U(\mf h)}\Lambda^p(\mf g_\oa/\mf h)$, for $p\geq  0$. Tensoring with this 
resolution (over $\mathbb{C}$), we conclude that every  
$M\in \mc C(\mf g_\oa,\mf h)$ 
has finite projective dimension. 
   	
In particular, we have 
\begin{align*}
&\text{fin.dim}\, \mc C(\mf \g,\h)=
\text{fin.dim} \,\mc C(\mf \g_\oa,\h) =
\text{dim}\g_\oa -\text{dim}\h.
\end{align*}
\end{ex}
   
\begin{ex}
Let $\mc O^\fp$ be the parabolic BGG category of $\mf g$-modules 
in the sense of \cite[Section 3]{Ma}. By \cite{CoM2}, we have 
\begin{align} \label{eq::egfindim}
&\text{fin.dim}\, \mc O^\fp  =
\gld \,\mc O_\oa^\fp=2\ell(w_0) -2\ell(w_0^\fp),
\end{align} 
where $w_0$ is the longest element in $W$ and
$w_0^\fp$ is the longest element in the Weyl group of 
the Levi subalgebra of $\fp$.
\end{ex}
   
\begin{ex}
Consider $\mf g=\pn$ with a reduced parabolic subalgebra $\fp$. 
For $\la \in \h^\ast$, let $\text{Irr}\,\mc O_{\la}^\fp$ denote 
the set of all highest weights of irreducible modules in the 
parabolic block $\mc O_\la^\fp$.   Recall the equivalence 
relation $\sim$ on $\h^*$ defined in~\cite[Subsection~5.2]{CC} 
which is transitively generated by 
$$\begin{cases}\la\sim \la \pm2\vare_k, &\mbox{ for~$1\le k\le n$;}\\
\la\sim w \cdot \la,&\mbox{ for~$w\in W_{[\lambda]}$.}
\end{cases}$$
Here   $W_{[\la]}$ denotes the integral Weyl group associated 
to $\la$. It is generated by all  reflection $s_\alpha$ with 
$\langle\la, \alpha \rangle \in \Z.$ It is well-known that 
$W_{[\la]}$ is the Weyl group of a certain semisimple Lie 
algebra (see, e.g., \cite[Theorem 3.4]{Hu08}). We define  
$w_0^{\la}$ to be the longest element of $W_{[\la]}$.    
For $\la ,\nu  \in \Sigma^+_\fp$, it follows from 
\cite[Theorem~B]{CP} (see also \cite[Theorem~3.2]{CP}) 
that  $\mc O_\la^\fp =\mc O_\nu^\fp$ if and only if $\la \sim \nu$.
As a consequence, there exists $\nu \in \text{Irr}\,\mc O_{\la}^\fp$
which is dominant and regular.
   	
Let $\la\in \h^\ast$ be a dominant weight such that the 
parabolic block $\mc O^\fp_\la$ (see Section~\ref{Sect::projdimpn} 
for the precise definition) is non-zero. Let 
$  (\mc O^\fp_{\oa})_{\la +X}$ denote the full subcategory 
of $\mc O^\fp_{\oa}$ consisting of all modules modules whose weights  
belong to the set $\la +X$. Applying, if necessary, the equivalence established in \cite[Section~2.3]{CMW} (see also 
\cite[Section 4.3]{CC}), we can assume that 
$W_{[\la]}$ is a parabolic subgroup of $W$. Then the problem
of global dimension of $(\mc O^\fp_{\oa})_{\la +X}$ reduces 
to that of an integral block for a semisimple Lie algebra of 
type $\bf A$ by \cite[Proposition~2.3]{CMW}. By  
\cite[Theorem~5.2]{CoM2} we have 
\begin{align}
&\text{gl.dim}   (\mc O^\fp_{\oa})_{\la +X} =2\ell(w_0^{\la}) 
-2\ell(w_0^\fp). \label{lem::34}
\end{align}

Take $\nu \in \text{Irr}\,\mc O_{\la}^\fp$ dominant 
and regular. In this case 
$\pd\, I_\oa^\fp(\nu) =2 \ell(w_0^{\la}) -2\ell(w_0^\fp)$ by 
\cite[Theorem~4.1, Proposition~6.9]{CoM2}.   By 
Theorem \ref{thm::findimthm}, we thus have 
\begin{align*}
&\text{fin.dim}\, \mc O_\la^\fp =2\ell(w_0^{\la}) - 2\ell(w_0^{\fp}).
\end{align*} 
\end{ex}

\subsection{The parabolic BGG category $\mc O^\fp$}
\label{Sbusect::parabolicCate}
 
In this section, we let $\mf g$ be an arbitrary  classical 
Lie superalgebra. We let $\langle {}_-,{}_-\rangle$ denote the usual 
bilinear form on $\mf h^\ast$.
 
In the remaining parts of the present paper, we use the notion of 
parabolic decomposition as defined in \cite[Section~2.4]{Ma} and 
consider a {\em reduced} parabolic subalgebra $\fp$ from  
\cite[Subsection~1.4]{CCC}. Namely, we assume that 
$\mf b\subseteq \fp\subseteq \mf g$ with a purely even 
Levi subalgebra $\mf l\subseteq \fp$.   The even part 
$\mf p_\oa$ of $\mf p$ is a parabolic subalgebra of $\mf g_\oa$ 
arising from a parabolic decomposition of $\mf g_\oa$.  
{\em In what follows, we always assume that $\fp_\ob =\g_1$ 
in the case when $\mf g$ is of type I.}
 
The {\em parabolic category} $\mc O^\fp$ (resp. $\mc O^{\fp_\oa}_\oa$) 
is the full subcategory of $\mc O$ (resp. $\mc O_\oa$) consisting
of all finitely generated $\mf g$-modules (resp. $\mf g_\oa$-modules) 
on which $\fp$ (resp. $\fp_\oa$) acts locally finitely. 
For simplicity of notation, we define 
$\mc O^\fp_\oa:= \mc O^{\fp_\oa}_\oa$, and, in what follows, 
we omit $\oa$ from   $(-)^{\fp_\oa}$ or $(-)_{\fp_\oa}$ when 
defining notation for $\g_\oa$-modules.  
 
For any $\la \in \h^\ast$, let $L^{\mf l}(\la)$ denote the 
irreducible $\mf l$-module with highest weight $\la$.
We define the set of {\em $\mf l$-dominant weights}  
(called $\fp$-dominant weights in \cite[Section~3]{CCC}) as follows:
\begin{align}
&\Sigma^+_\fp:=\{\la\in \mf h^\ast|~ \langle\la, \alpha\rangle 
\in \mathbb Z_{\geq 0},\text{ for all } \alpha \in \Phi^+(\mf l) \}. \label{eq::ldomi}
\end{align}  
The corresponding  parabolic category $\mc O^\fp$ is the Serre 
subcategory of $\mc O$ generated by 
$\{L(\la) \, | \,  \la \in \Sigma_\fp^+\}$. 
 
Let $\mc O^\fp_\la$ denote the (indecomposable) block in 
$\mc O^\fp$ containing $L(\la)$. 
In particular, $\mc O_\la:=\mc O_\la^{\mf  b}$ is the 
(indecomposable) block  containing $L(\la)$ 
in  category $\mc O$.
 
For $\la \in \Sigma_\fp^+$, we define the parabolic Verma 
$\mf g_\oa$-module and  the parabolic Verma $\mf g$-module 
respectively as  
\begin{align}
&\Delta^\fp_\oa(\la) : =
\Ind_{\mf p_\oa}^{\mf g_\oa}L^{\mf l}(\la)\quad\text{ and }\quad
\Delta^\fp(\la):=
\Ind_{\mf p}^{\mf g}L^{\mf l}(\la) . \label{Eq::paraV}
\end{align}  
Similarly, we can define the  dual parabolic Verma modules  
$\nabla^\fp_\oa(\la)$ and $\nabla^\fp(\la)$  in  
$\mc O^\fp_\oo$ and $\mc O^\fp$, respectively 
(see \cite[Definition~3.2]{CCC}). We have 
$$\Delta^\fp(\la) \twoheadrightarrow L(\la)\quad\text{ and }\quad L(\la) 
\hookrightarrow \nabla^\fp(\la).$$
 
Let $P^\fp(\la)$ be the projective cover of $L(\la)$ 
in $\mc O^\fp$ and $I^\fp(\la)$ be the injective envelope of $L(\la)$ 
in $\mc O^\fp$. For $\mc O^\fp =\mc O$, we
denote the corresponding objects by $P(\la)$ and $I(\la)$.   
Similarly, we define the indecomposable 
projectives $P_\oa(\la)$, $P_\oa^\fp(\la)$ and 
injectives $I_\oa(\la)$, $I^\fp_\oa(\la)$ in 
$\mc O_\oa$ and $\mc O_\oa^\fp$, respectively.  Also, we denote by $T^\fp(\la)$ and $T_\oa^\fp(\la)$ the tilting modules of highest weight $\la$ in $\mc O^\fp$ and $\mc O_\oa^\fp$, respectively. Finally, 
we note that $\Ind: \mc O_\oa^\fp\rightarrow \mc O^\fp$ 
and $\Res : \mc O^\fp\rightarrow \mc O_\oa^\fp$ are well-defined. 
 
Also, we note that  
$$\Delta^\fp_\la= K(\Delta^\fp_\oa(\la)),~\nabla^\fp(\la):=
\Ind_{\mf g_\oa+\mf g _{-1}}^{\mf g}(\nabla^\fp_\oa(\la)
\otimes \Lambda^{\text{top}}\mf g_1^\ast)\cong 
\Coind_{\mf p_\oa+\mf g_{-1}}^{\mf g}L^{\mf l}(\la),$$ 
in the case when $\mf g$ is of type I. 
 
We define $w_0^\fp$ to be the longest element 
in the Weyl group of $\mf l$.  Finally, we set $w_0:=w_{0}^{\mf b}$.

\subsection{Projective dimension of modules in $\mc O^\fp$} 
\label{sect::52} 

\subsubsection{Preliminary results}

The following lemma is the parabolic version  of \cite[Lemma~4.3]{CS}.

\begin{lem}\label{lem::31}
{\hspace{2mm}}

\begin{itemize}
\item[(1).] For $M\in \mc O^\fp$, we have $\epd\, M \geq \epd\, \Res M$.  
\item[(2).] For $N\in \mc O^\fp_\oa$, we have 
$\epd\, N = \epd\,  \Ind N =\epd\, \Coind N$. 
\end{itemize}
\end{lem}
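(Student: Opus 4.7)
The plan is to exploit the induction--restriction--coinduction triangle for the pair $(\mf g_\oa, \mf g)$, restricted to the parabolic subcategories, together with the isomorphism $\Ind \cong \Coind(\Lambda^{\mathrm{top}} \mf g_\ob \otimes -)$ recorded in Section~\ref{Sect::pre}, which guarantees that $\Ind$, $\Res$, and $\Coind$ are all exact and hence pass freely through resolutions. By adjunction, the exactness of $\Coind$ forces $\Res$ to send projectives in $\mc O^\fp$ to projectives in $\mc O^\fp_\oa$; symmetrically, the exactness of $\Res$ forces $\Ind$ to preserve projectives.

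For part (1), I would simply apply the exact functor $\Res$ to a projective resolution of $M$ of length $\pd\, M$. By the preservation of projectives just noted, the result is a projective resolution of $\Res M$ in $\mc O^\fp_\oa$, which gives $\pd\, \Res M \leq \pd\, M$ at once.

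For part (2), the same argument applied to $\Ind$ yields $\pd\, \Ind N \leq \pd\, N$. For the opposite inequality, the PBW theorem for Lie superalgebras supplies a $\mf g_\oa$-module isomorphism
$$\Res \Ind N \cong \Lambda^{\bullet} \mf g_\ob \otimes N,$$
in which the trivial summand $\Lambda^0 \mf g_\ob \otimes N = N$ splits off as a direct summand in $\mc O^\fp_\oa$. Hence $\pd\, N \leq \pd\, \Res \Ind N$, and applying part (1) to $\Ind N$ gives $\pd\, \Res \Ind N \leq \pd\, \Ind N$, forcing $\pd\, N = \pd\, \Ind N$. For the coinduction identity I would rewrite $\Coind N \cong \Ind\bigl((\Lambda^{\mathrm{top}} \mf g_\ob)^{\ast} \otimes N\bigr)$ via the isomorphism above, and note that tensoring with the one-dimensional $\mf g_\oa$-module $(\Lambda^{\mathrm{top}} \mf g_\ob)^{\ast}$ is an exact autoequivalence of $\mc O^\fp_\oa$ and therefore preserves projective dimension; the equality $\pd\, \Coind N = \pd\, N$ then follows from the already-proved identity for $\Ind$.

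The main subtlety is ensuring that every step takes place inside the parabolic subcategories: one must verify that $\Ind$, $\Res$, $\Coind$ descend to $\mc O^\fp$ and $\mc O^\fp_\oa$ (noted at the end of Subsection~\ref{Sbusect::parabolicCate}), and that the decomposition $\Res \Ind N \cong \Lambda^{\bullet} \mf g_\ob \otimes N$ respects the $\mf p_\oa$-local finiteness condition, which is automatic since $\Lambda^{\bullet} \mf g_\ob$ is finite-dimensional as a $\mf g_\oa$-module and $\mc O^\fp_\oa$ is stable under tensoring with finite-dimensional $\mf g_\oa$-modules.
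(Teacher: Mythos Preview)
Your argument is correct and is essentially the paper's own proof, written out in full detail: the paper simply records that $\Res$, $\Ind$, $\Coind$ send projective resolutions to projective resolutions and that $N$ is a direct summand of $\Res\Ind N$, then asserts that this implies all statements. Your handling of the $\Coind$ case via the rewriting $\Coind N \cong \Ind\bigl((\Lambda^{\mathrm{top}}\mf g_\ob)^\ast \otimes N\bigr)$ is a minor expository variation on the same idea.
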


\begin{proof}
As already mentioned, $\Res$, $\Ind$ and $\Coind$ send 
projective resolutions to projective resolutions. 
Also, $N$ is a direct summand of $\Res\Ind N$. 
This implies all statements.
\end{proof}

\begin{cor} \label{coro::53}
For any weight $\zeta\in \Sigma^+_\fp,$ we have 	
\[\epd\, \Delta^\fp(\zeta) \geq  \epd \, \Delta_\oa^\fp(\zeta)
\quad\text{ and }\quad
\epd\, \nabla^\fp(\zeta) \geq  \epd \, \nabla_\oa^\fp(\zeta).\] 
\end{cor}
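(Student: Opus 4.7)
The plan is to apply Lemma~\ref{lem::31}(1) to $\Delta^\fp(\zeta)$ and $\nabla^\fp(\zeta)$ and then to exhibit, via a direct PBW computation, $\Delta^\fp_\oa(\zeta)$ and $\nabla^\fp_\oa(\zeta)$ as $\g_\oa$-direct summands of their respective restrictions. Lemma~\ref{lem::31}(1) immediately yields
\[
\pd\, \Delta^\fp(\zeta) \geq \pd\, \Res \Delta^\fp(\zeta) \quad\text{ and }\quad \pd\, \nabla^\fp(\zeta) \geq \pd\, \Res \nabla^\fp(\zeta),
\]
so it suffices to establish $\pd\, \Res \Delta^\fp(\zeta) \geq \pd\, \Delta^\fp_\oa(\zeta)$ and the analogous inequality for $\nabla$.

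For the Verma case in the principal situation of type I with $\fp_\ob = \g_1$ (the standing assumption of the later sections of the paper), we have $\Delta^\fp(\zeta) = K(\Delta^\fp_\oa(\zeta)) = \Ind_{\g_{\geq 0}}^{\g}\Delta^\fp_\oa(\zeta)$. Applying the PBW isomorphism to the decomposition $\g = \g_{-1} \oplus \g_{\geq 0}$ and using that $\g_{-1}$ is itself a $\g_\oa$-submodule of $\g$ under the adjoint action, one obtains a $\g_\oa$-module isomorphism
\[
\Res \Delta^\fp(\zeta) \cong \Lambda^\bullet \g_{-1} \otimes \Delta^\fp_\oa(\zeta) \cong \bigoplus_{k \geq 0} \Lambda^k \g_{-1} \otimes \Delta^\fp_\oa(\zeta),
\]
with $\g_\oa$ acting diagonally. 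The $k=0$ summand is precisely $\Delta^\fp_\oa(\zeta)$, so $\pd\, \Res \Delta^\fp(\zeta) \geq \pd\, \Delta^\fp_\oa(\zeta)$. The case of $\nabla$ proceeds in the same way, using the identity $\Ind \cong \Coind(\Lambda^{\text{top}}\g_\ob \otimes -)$ recorded in Section~\ref{Sect::pre} together with the identification $\nabla^\fp(\zeta) = \Coind_{\g_{\leq 0}}^{\g}\nabla^\fp_\oa(\zeta)$, which produces the parallel $\g_\oa$-decomposition of $\Res \nabla^\fp(\zeta)$ featuring $\nabla^\fp_\oa(\zeta)$ as a direct summand.

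I expect the main technical subtlety to lie in the general classical setting beyond type I: there an odd complement $\fp_\ob^-$ to $\fp_\ob$ in $\g_\ob$ is only a $\fp_\oa$-module rather than a $\g_\oa$-module, so the PBW isomorphism yields merely a $\g_\oa$-filtration on $\Res \Delta^\fp(\zeta)$ instead of a splitting. Nevertheless, since $\Delta^\fp_\oa(\zeta)$ still appears as the bottom layer of this filtration and each higher subquotient has the form $M_k \otimes \Delta^\fp_\oa(\zeta)$ for a finite-dimensional $\g_\oa$-module $M_k$, the desired inequality follows from a standard long exact sequence argument combined with the fact that tensoring with a finite-dimensional $\g_\oa$-module preserves the class of modules in $\mc O_\oa^\fp$.
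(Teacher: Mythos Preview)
Your type~I argument is correct and is exactly the paper's approach: the paper's proof consists of the single observation that $\Delta^\fp_\oa(\zeta)$ and $\nabla^\fp_\oa(\zeta)$ are $\g_\oa$-direct summands of $\Res\Delta^\fp(\zeta)$ and $\Res\nabla^\fp(\zeta)$, followed by Lemma~\ref{lem::31}(1).

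Your proposed handling of the general classical case, however, has a genuine gap. The long exact sequence argument points the wrong way: from a filtration whose subquotients all have projective dimension at most $d:=\pd\,\Delta^\fp_\oa(\zeta)$ one obtains only the upper bound $\pd\,\Res\Delta^\fp(\zeta)\le d$, whereas what you need is the lower bound $\pd\,\Res\Delta^\fp(\zeta)\ge d$. A submodule (or bottom filtration layer) can have strictly larger projective dimension than the ambient module, so knowing that $\Delta^\fp_\oa(\zeta)$ sits at the bottom of a filtration is not enough. (There is also a secondary issue: the subquotients are not of the form $M_k\otimes\Delta^\fp_\oa(\zeta)$ for a finite-dimensional $\g_\oa$-module $M_k$, since the odd complement $\fu^-_\ob$ is only an $\fl$-module, not a $\g_\oa$-module.)

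The paper's direct-summand assertion does hold beyond type~I, but it needs a different justification. One clean route: show that $\Res\Delta^\fp(\zeta)$ has a $\Delta^\fp_\oa$-flag (for instance via $\text{Ext}^1_{\mc O^\fp_\oa}(\Res\Delta^\fp(\zeta),\nabla^\fp_\oa(\mu))\cong\text{Ext}^1_{\mc O^\fp}(\Delta^\fp(\zeta),\Coind\,\nabla^\fp_\oa(\mu))=0$). Since $\zeta$ is the unique maximal weight of $\Res\Delta^\fp(\zeta)$, the flag can be arranged so that $\Delta^\fp_\oa(\zeta)$ appears as a quotient, giving a surjection $\Res\Delta^\fp(\zeta)\twoheadrightarrow\Delta^\fp_\oa(\zeta)$. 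Composing with the obvious inclusion $\Delta^\fp_\oa(\zeta)\hookrightarrow\Res\Delta^\fp(\zeta)$ yields a nonzero endomorphism of $\Delta^\fp_\oa(\zeta)$, hence a nonzero scalar, which splits the inclusion.
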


\begin{proof}
Noting that the modules
$\Delta_\oa^\fp(\zeta)$ and  $\nabla_\oa^\fp(\zeta)$  
are  direct summands of $\Res \Delta^\fp(\zeta)$ and 
$\Res \nabla^\fp(\zeta)$, respectively, the statement follows 
from Lemma~\ref{lem::31}.
\end{proof}

The following result is a consequence of the combination of  \cite[Corollary~3.2.5]{Co} and \cite[Theorem~8.2.1]{Co}. 
We provide more details in the proof.
 
\begin{prop}\label{prop::53} 
Suppose that $\mf g$ is of type I. 
Then, for each $\la \in \Sigma^+_\fp$, we have 
$$\epd_{\mc O^\fp}\, I^\fp(\la) = 
\epd_{\mc O^\fp_\oa}\, I^\fp_\oa(\la).$$ 	
\end{prop}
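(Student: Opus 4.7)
The plan is to sandwich $\pd_{\mc O^\fp} I^\fp(\la)$ between $\pd_{\mc O^\fp_\oa} I^\fp_\oa(\la)$ on both sides by combining Lemma~\ref{lem::31} with two direct-summand relations:
\begin{itemize}
\item[(a)] $I^\fp_\oa(\la)$ is a direct summand of $\Res I^\fp(\la)$ in $\mc O^\fp_\oa$;
\item[(b)] $I^\fp(\la)$ is a direct summand of $\Coind I^\fp_\oa(\la)$ in $\mc O^\fp$.
\end{itemize}
Granting (a) and (b), Lemma~\ref{lem::31}(1) combined with (a) gives $\pd I^\fp_\oa(\la)\leq \pd \Res I^\fp(\la)\leq \pd I^\fp(\la)$, while Lemma~\ref{lem::31}(2) combined with (b) gives $\pd I^\fp(\la)\leq \pd \Coind I^\fp_\oa(\la)=\pd I^\fp_\oa(\la)$, yielding the claimed equality.

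To set up (a) and (b), I would first observe that both $\Res I^\fp(\la)$ and $\Coind I^\fp_\oa(\la)$ are injective, since $\Res$ is right adjoint to the exact functor $\Ind$ and $\Coind$ is right adjoint to the exact functor $\Res$. Each therefore decomposes into indecomposable injectives, and the multiplicities of $I^\fp_\oa(\la)$ and $I^\fp(\la)$ as summands equal
\[
m_\la := \dim\Hom_{\mc O^\fp_\oa}(L_\oa(\la),\,\Res I^\fp(\la)) \quad\text{and}\quad n_\la := \dim\Hom_{\mc O^\fp}(L(\la),\,\Coind I^\fp_\oa(\la)),
\]
respectively. By Frobenius reciprocity for the adjunctions $\Ind\dashv\Res$ and $\Res\dashv\Coind$, together with the standard identity $\dim\Hom(M,I(\mu))=[M:L(\mu)]$ for an injective hull, we obtain $m_\la=[\Ind L_\oa(\la):L(\la)]$ and $n_\la=[\Res L(\la):L_\oa(\la)]$.

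Positivity of $n_\la$ is immediate: the highest weight vector of $L(\la)$ generates a highest-weight $\g_\oa$-submodule of $\Res L(\la)$ whose head is $L_\oa(\la)$. Positivity of $m_\la$ is the main technical point, and it is here that the type I hypothesis enters. I would exploit the natural surjection
\[
\Ind L_\oa(\la) \;=\; U(\g)\otimes_{U(\g_\oa)}L_\oa(\la) \;\twoheadrightarrow\; U(\g)\otimes_{U(\g_{\geq 0})}L_\oa(\la) \;=\; K(L_\oa(\la)),
\]
induced by the inclusion $U(\g_\oa)\subset U(\g_{\geq 0})$ after extending $L_\oa(\la)$ to a $\g_{\geq 0}$-module by letting $\g_1$ act trivially. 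Exactness of the Kac functor applied to $\Delta^\fp_\oa(\la)\twoheadrightarrow L_\oa(\la)$ yields $\Delta^\fp(\la)=K(\Delta^\fp_\oa(\la))\twoheadrightarrow K(L_\oa(\la))$, so $L(\la)$, being the head of $\Delta^\fp(\la)$, is also the head of $K(L_\oa(\la))$. Consequently $m_\la\geq [K(L_\oa(\la)):L(\la)]\geq 1$, establishing (a).
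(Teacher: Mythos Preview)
Your proof is correct and follows essentially the same approach as the paper: both establish the two direct-summand relations (a) and (b) via Frobenius reciprocity, reducing to the nonvanishing of $[\Res L(\la):L_\oa(\la)]$ and $[\Ind L_\oa(\la):L(\la)]$, and both prove the latter by surjecting $\Ind L_\oa(\la)$ onto the Kac module $K(L_\oa(\la))=\Ind^{\g}_{\g_{\geq 0}}L_\oa(\la)$. The only cosmetic difference is that for $n_\la>0$ the paper cites an external reference to get $L_\oa(\la)$ as a \emph{quotient} of $\Res L(\la)$, whereas your direct highest-weight-vector argument shows it is a composition factor, which is all that is needed.
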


\begin{proof}  
The proof of \cite[Corollary 4.7]{CCC} shows that 
$L_\oa(\la)$ is a quotient of $\Res L(\la)$. 
Therefore we have 
\begin{align*}
&\text{Hom}_{\mc O^\fp}(L(\la), \Coind\, I^\fp_\oa (\la)) = 
\text{Hom}_{\mc O_\oa^\fp}(\Res L(\la), I^\fp_\oa (\la))  = 
[\Res L(\la): L_\oa(\la)]\neq 0.
\end{align*}
This means that $L(\la) \hookrightarrow \Coind I_\oa^\fp(\la)$ and 
so $I^\fp(\la)$ must be a direct summand of $\Coind I_\oa^\fp(\la)$. 
Consequently $\pd\, I^\fp(\la) \leq \pd\, I_\oa^\fp(\la).$
	
Next, we claim that $L_\oa(\la)$ is a direct summand of the socle 
of $\Res I^\fp(\la)$. We start by observing that 
\begin{align}
&\Ind L_\oa(\la) =\Ind^\g_{\mf g_{\geq 0}} \Ind^{\g_{\geq 0}}_{\mf g_\oa} L_\oa(\la) \cong 
\Ind^\g_{\mf g_{\geq 0}} (\Lambda^{\bullet} \mf g_{1}\otimes L_\oa(\la)).
\end{align} 
This implies that 	
\begin{align*}
&\text{Hom}_{\mc O^\fp_\oa}(L_\oa(\la), \Res I^\fp (\la)) = 
\text{Hom}_{\mc O^\fp}(\Ind L_\oa(\la), I^\fp (\la))\\ 
& = [\Ind L_\oa(\la): L(\la)]\geq[\Ind^{\mf g}_{\mf g_{\geq 0}} L_\oa(\la): L(\la)] \neq 0.
\end{align*} 
Taking Lemma \ref{lem::31} into account, we thus get 
$\pd\, I_\oa^\fp(\la)\leq \pd\, \Res I^\fp(\la) \leq \pd \,I^\fp(\la)$.
This completes the proof.
\end{proof}

The following result is also a consequence of the combination 
of \cite[Corollary~3.2.5]{Co} and  \cite[Theorem~8.2.1]{Co}. 
We provide more details in the proof. 
 
\begin{prop} \label{thm::32} 
Suppose that $\mf g$ is of type I. 
Then, for each $\la \in \Sigma^+_\fp$, we have 
$$\epd_{\mc O^\fp}\, T^\fp(\la) = \epd_{\mc O^\fp_\oa}\,  T^\fp_\oa(\la).$$ 	
\end{prop}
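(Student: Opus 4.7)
My plan is to mirror the proof of Proposition~\ref{prop::53}, breaking the equality into two inequalities whose combination yields the result. Both directions rest on Lemma~\ref{lem::31}, the characterization of tilting modules by $\Delta$- and $\nabla$-flags, and the Frobenius extension structure of $\mc O^\fp$ over $\mc O^\fp_\oa$ exploited in the proof of Theorem~\ref{thm::findimthm}.

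For the lower bound $\pd T^\fp_\oa(\la)\leq \pd T^\fp(\la)$, the key step is to show that $T^\fp_\oa(\la)$ is a direct summand of $\Res T^\fp(\la)$. Since $\Delta^\fp(\zeta)=K(\Delta^\fp_\oa(\zeta))$ (see Subsection~\ref{subsect::21}), one has $\Res \Delta^\fp(\zeta)\cong \Lambda^\bullet \mf g_{-1}\otimes \Delta^\fp_\oa(\zeta)$ as $\mf g_\oa$-modules, and a parallel identity holds for $\nabla^\fp$ using the description $\nabla^\fp(\zeta) = \Ind^{\mf g}_{\mf g_\oa+\mf g_{-1}}(\nabla^\fp_\oa(\zeta)\otimes \Lambda^{\text{top}}\mf g_1^*)$. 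Applying the exact functor $\Res$ to the $\Delta^\fp$- and $\nabla^\fp$-flags of $T^\fp(\la)$, the module $\Res T^\fp(\la)$ carries both a $\Delta^\fp_\oa$-flag and a $\nabla^\fp_\oa$-flag, hence decomposes as a direct sum of indecomposable tiltings in $\mc O^\fp_\oa$. Moreover, $\la$ is the highest $\mf g_\oa$-weight of $\Res T^\fp(\la)$ with one-dimensional weight space (arising from $L^{\mf l}(\la)_\la$ inside the embedded $\Delta^\fp(\la)\hookrightarrow T^\fp(\la)$), so $T^\fp_\oa(\la)$ appears as a summand with multiplicity one, and Lemma~\ref{lem::31}(1) then delivers the bound.

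For the reverse inequality, the Frobenius extension property of $\mc O^\fp / \mc O^\fp_\oa$ ensures that the counit $\Ind\Res T^\fp(\la)\to T^\fp(\la)$ is split, so $T^\fp(\la)$ is a direct summand of $\Ind\Res T^\fp(\la)$. Lemma~\ref{lem::31}(2) then yields
\[\pd T^\fp(\la)\leq \pd\Ind\Res T^\fp(\la)=\pd \Res T^\fp(\la),\]
and combined with the first step this forces $\pd T^\fp(\la)=\pd \Res T^\fp(\la)$. Writing the decomposition $\Res T^\fp(\la)\cong \bigoplus_i T^\fp_\oa(\eta_i)^{m_i}$ with all $\eta_i\leq \la$, the problem reduces to the inequality $\pd T^\fp_\oa(\eta_i)\leq \pd T^\fp_\oa(\la)$ for every $i$.

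The main obstacle is the monotonicity of $\pd T^\fp_\oa(\eta)$ in the highest weight $\eta$: this is a statement purely about the parabolic BGG category for the reductive Lie algebra $\mf g_\oa$ and can be extracted from the explicit formulas for projective dimensions of tilting modules in parabolic blocks of category $\mc O$ (see \cite[Theorem~5.2]{CoM2}), which express $\pd T^\fp_\oa(\eta)$ as twice the codimension of a certain Bruhat cell, a quantity monotone in the appropriate ordering on $\eta$. This is effectively the combination of \cite[Corollary~3.2.5]{Co} with \cite[Theorem~8.2.1]{Co} cited preceding the statement.
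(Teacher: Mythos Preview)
Your lower bound is correct and in fact slightly cleaner than the paper's: you extract $T^\fp_\oa(\la)$ as a direct summand of $\Res T^\fp(\la)$ via the highest weight, whereas the paper extracts $T^\fp_\oa(\la+\xi_-)$ (where $\C_{\xi_-}\cong\Lambda^{\text{top}}\mf g_{-1}$) and then uses that tensoring with this one-dimensional module is an autoequivalence of $\mc O^\fp_\oa$.

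The upper bound, however, has a genuine gap. You correctly reduce to the inequality $\pd T^\fp_\oa(\eta_i)\leq \pd T^\fp_\oa(\la)$ for every tilting summand $T^\fp_\oa(\eta_i)$ of $\Res T^\fp(\la)$, and then appeal to a general monotonicity of $\eta\mapsto\pd T^\fp_\oa(\eta)$. No such monotonicity exists: already for $\mf g_\oa=\mf{sl}_2$ in a regular block one has $s\cdot 0<0$ in the weight order while $\pd T_\oa(s\cdot 0)=1>0=\pd T_\oa(0)$. The reference \cite[Theorem~5.2]{CoM2} computes the \emph{global dimension} of a parabolic block, not the projective dimensions of individual tiltings (those are governed by Lusztig's $\mathbf a$-function, which is not monotone in any weight or Bruhat ordering). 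Invoking \cite[Corollary~3.2.5, Theorem~8.2.1]{Co} here is circular: these are precisely the results the paper cites as already yielding the proposition, so they cannot serve as input to an independent proof of the monotonicity step.

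The paper avoids the problem by inducing from a \emph{single} even tilting rather than from all of $\Res T^\fp(\la)$. Writing $\C_{\xi_+}\cong\Lambda^{\text{top}}\mf g_1$, one checks that $\Ind\Delta^\fp_\oa(\mu)\cong\Ind^{\mf g}_{\fp}\Ind^{\fp}_{\fp_\oa}L^{\mf l}(\mu)$ has a $\Delta^\fp$-flag starting at $\Delta^\fp(\mu+\xi_+)$; hence $T^\fp(\mu+\xi_+)$ is a direct summand of $\Ind T^\fp_\oa(\mu)$. Taking $\mu=\la-\xi_+$ and applying Lemma~\ref{lem::31}(2) gives
\[
\pd T^\fp(\la)\;\leq\;\pd\Ind T^\fp_\oa(\la-\xi_+)\;=\;\pd T^\fp_\oa(\la-\xi_+)\;=\;\pd T^\fp_\oa(\la),
\]
the last equality because $(-)\otimes\C_{\xi_+}$ is an autoequivalence of $\mc O^\fp_\oa$. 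No comparison among the various tilting summands of $\Res T^\fp(\la)$ is needed.
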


\begin{proof} 
Define $\xi_{\pm}\in \h^\ast$ such that 
$\C_{\xi_\pm }\cong \Lambda^{\text{top}} \mf g_{\pm 1}$, as 
$\mf g_\oa$-modules. We note that
\begin{align*}
&\Ind \Delta_\oo^{\fp}(\lambda) \cong \Ind_{\mf g_\oo}^{\mf g} 
\Ind_{\mf p_\oo}^{\mf g_\oo}L_{\mf l}(\lambda) \cong   
\Ind_{\mf p}^{\mf g} \Ind_{\mf p_\oo}^{\mf p}L_{\mf l}(\lambda).
\end{align*} 
This means that $\Ind \Delta_\oo^{\fp}(\lambda) $ has a 
$\Delta^{\fp}$-flag starting at $\Delta^{\fp}(\la+\xi_+)$. 
Therefore, the module $T^\fp(\la  +\xi_+)$
is a direct summand of the module $\Ind T_\oa^\fp(\la)$.
Using Lemma~\ref{lem::31}, we conclude that 
$$\pd\, T^\fp(\la) \leq \pd\, \Ind T^\fp_\oa (\la -\xi_+) 
=\pd \,T^\fp_\oa(\la-\xi_+)=\pd \,T^\fp_\oa(\la).$$
Here the last equality follows by tensoring with 
$\Lambda^{\text{top}} \mf g_{\pm 1}$.
	
Next, we claim that 
$ T_\oa^\fp(\la+\xi_-) \hookrightarrow \Res T^\fp(\la)$. 
To see this, we note that 
\begin{align}
&\Delta^\fp_\oa(\la +\xi_-) \hookrightarrow \Lambda^{\bullet} \mf g_{-1}\otimes \Delta^\fp_\oa(\la) \cong  \Res \Delta^\fp(\la)  \hookrightarrow  \Res T^\fp(\la).
\end{align} 
This implies that there is an indecomposable  direct summand of 
$\Res T^\fp(\la)$ which has a $\Delta^\fp_\oa$-flag starting at 
$\Delta^\fp_\oa(\la+\xi_-)$. In particular, 
$T_\oa^\fp(\la+\xi_-)$ is a direct summand
of $\Res T^\fp(\la)$.
Consequently, we have  
$$\pd \, T_\oa^\fp(\la)= \pd\, T_\oa^\fp(\la+\xi_-) \leq \pd\, \Res T^\fp(\la)\leq \pd\, T^\fp(\la)$$ 
by Lemma \ref{lem::31}. 
This claim of the proposition follows. 
\end{proof}

\subsubsection{Parabolic dimension shift} 

This subsection is devoted to the following formula
for projective dimension.

\begin{thm} \label{thm::510}
Suppose that $M\in \mc O^\fp$ with $\epd_{\mc O^\fp}M<\infty$. Then we have 
\begin{align*}
&\epd_{\mc O^\fp}M = \epd_{\mc O}M   -2\ell(w_0^\fp).
\end{align*} 
\end{thm}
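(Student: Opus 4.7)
I would prove the theorem by reducing to the known classical analog for $\mf g_\oa$-modules, which appears (explicitly or implicitly) in \cite{CoM2}: for any $N \in \mc O^\fp_\oa$ with $\pd_{\mc O^\fp_\oa} N < \infty$,
\[
\pd_{\mc O^\fp_\oa} N = \pd_{\mc O_\oa} N - 2\ell(w_0^\fp).
\]
This is obtained in the classical case using the derived parabolic Zuckerman functor together with Koszul duality in type $\mathbf{A}$ (see \cite[Theorem~5.2]{CoM2} and \eqref{lem::34}).

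The transport to the super setting rests on the two reduction identities
\[
\pd_{\mc O^\fp} M = \pd_{\mc O^\fp_\oa}\Res M \qquad \text{and} \qquad \pd_{\mc O} M = \pd_{\mc O_\oa}\Res M.
\]
One inequality ($\geq$) in each is Lemma~\ref{lem::31}(1). For the reverse ($\leq$), I would exploit Frobenius reciprocity for the adjunction $\Res \dashv \Coind$,
\[
\Ext^i_{\mc O^\fp}(M, \Coind N) \cong \Ext^i_{\mc O^\fp_\oa}(\Res M, N),
\]
which, setting $n := \pd_{\mc O^\fp_\oa}\Res M$, gives $\Ext^i_{\mc O^\fp}(M,\Coind N) = 0$ for all $N\in\mc O^\fp_\oa$ and $i>n$. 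Since each simple module $L(\la) \in \mc O^\fp$ embeds into $\Coind L_\oa(\la)$ (as in the proof of Proposition~\ref{prop::53}), one propagates this vanishing to arbitrary simples using the canonical filtration obtained by iterating the unit map, whose successive cokernels are controlled on the $\mf g_\oa$-level by the Koszul-type identity $\Res\Coind\Res({-}) \cong \Lambda^\bullet \mf g_\ob \otimes \Res({-})$; because $\mf g_\ob$ is finite-dimensional, this iteration terminates after at most $\dim\mf g_\ob$ steps. The same argument, applied with $\fp = \mf b$, yields the second identity.

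Combining the reduction identities with the classical formula applied to $N = \Res M$ produces
\[
\pd_{\mc O^\fp} M = \pd_{\mc O^\fp_\oa}\Res M = \pd_{\mc O_\oa}\Res M - 2\ell(w_0^\fp) = \pd_{\mc O} M - 2\ell(w_0^\fp),
\]
as required.

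\textbf{Main obstacle.} The delicate point is the reverse inequality $\pd_{\mc O^\fp} M \leq \pd_{\mc O^\fp_\oa}\Res M$, namely showing that projective dimension in the super parabolic category is truly controlled by restriction to $\mf g_\oa$. Frobenius reciprocity immediately supplies vanishing for $\Ext$s with modules in the image of $\Coind$, but propagating this to arbitrary simples requires careful tracking of the Koszul-like filtration structure of $\Res\Coind\Res M$. An alternative route is to bypass this reduction and instead derive the formula directly for $M = I^\fp(\la)$ and $M = T^\fp(\la)$ (using Propositions~\ref{prop::53} and \ref{thm::32} combined with the classical formula for $I^\fp_\oa(\la)$, $T^\fp_\oa(\la)$), then bootstrap to arbitrary $M$ with $\pd_{\mc O^\fp}M<\infty$ via a minimal projective-injective copresentation, invoking that finite projective dimension in a block of $\mc O^\fp$ is governed by the projective dimensions of its injective and tilting modules.
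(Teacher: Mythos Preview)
Your overall strategy matches the paper's: reduce to the classical formula in $\mc O^\fp_\oa$ from \cite{CoM2} via the two identities $\pd_{\mc O^\fp} M = \pd_{\mc O^\fp_\oa}\Res M$ and $\pd_{\mc O} M = \pd_{\mc O_\oa}\Res M$, then combine. The paper, however, does not attempt to prove these reduction identities by hand. It first checks separately that $\pd_{\mc O^\fp} M < \infty \Rightarrow \pd_{\mc O} M < \infty$ (each projective $Q$ in $\mc O^\fp$ is a summand of $\Ind\Res Q$, so $\pd_{\mc O} Q < \infty$ by Frobenius reciprocity, and one climbs a finite $\mc O^\fp$-projective resolution of $M$), and then simply invokes \cite[Corollary~3.2.5]{Co} and \cite[Theorem~7.2.1(iii)]{Co} to obtain both identities; the classical shift is then \cite[Theorem~4.1]{CoM2}.

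The gap in your write-up is precisely the reverse inequality you flag as the obstacle. Iterating the unit $Y \to \Coind\Res Y$ produces the (infinite) monadic bar resolution; the isomorphism $\Res\Coind\Res Y \cong \Lambda^\bullet\mf g_\ob \otimes \Res Y$ does not force this iteration to terminate, since at each step the restricted cokernel again has the full $\Lambda^\bullet\mf g_\ob$ tensored in, and no grading argument makes the successive cokernels eventually vanish. What \emph{does} terminate in length $\dim\mf g_\ob$ is a different, carefully constructed relative Koszul/Chevalley--Eilenberg complex of coinduced modules, and establishing that this complex is exact and computes the correct Ext groups is exactly the nontrivial input from Coulembier's Gorenstein framework \cite{Co} that the paper cites rather than reproves. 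Your alternative route through $I^\fp(\la)$ and $T^\fp(\la)$ does not close this gap either: Propositions~\ref{prop::53} and \ref{thm::32} give the formula only for injectives and tiltings, and knowing $\pd$ of all injectives determines the finitistic dimension (a supremum), not the projective dimension of an individual $M$; there is no general ``bootstrap'' from a projective-injective copresentation that recovers $\pd_{\mc O^\fp} M$ exactly.
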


\begin{proof} 
We first claim that 
\[\pd_{\mc O^\fp}M < \infty \Rightarrow \pd_{\mc O}M < \infty. \]
To see this, we let $Q$ be a projective module in $\mc O^\fp$.
The adjoint pair  $\Ind$ and $\Res$ of functors between 
$\mc O_\oa^\fp$ and $\mc O^\fp$ gives an epimorphism 
$\Ind\Res Q \rightarrow Q$, and so $Q$ is a direct summand of 
$\Ind \Res Q$. Using Frobenius reciprocity, it follows that 
\[\text{Ext}_{\mc O}^d(\Ind \Res Q, N)\cong 
\text{Ext}_{\mc O_\oa}^d( \Res Q, \Res N)=0,\] 
for any $d>\pd_{\mc O_\oa}\Res Q$ and $N\in \mc O$.  
As a consequence, $\pd_{\mc O}Q<\infty.$
	
Now suppose that $\pd_{\mc O^\fp}M < \infty$. This means that 
$M$ has a finite projective resolution in $\mc O^\fp$:
\[   0 \rightarrow Q^d\xrightarrow{f^d} Q^{d-1}\xrightarrow{f^{d-1}} 
\cdots \xrightarrow{f^2} Q^1\xrightarrow{f^1} Q^0 \xrightarrow{f^0} M \rightarrow 0.\]
We define $K^s$ to be the kernel of the map $f^s$, for each $s$. Then 
the inequality $\pd_{\mc O}K^{s}<\infty$, together with the exactness 
of $0\rightarrow K^s \rightarrow Q^{s-1} \rightarrow K^{s-1}\rightarrow 0$,
imply that $\pd_{\mc O}K^{s-1}<\infty$. Consequently, 
we have $\pd_{\mc O}M < \infty$.

Now suppose that both $\pd_{\mc O}M$ and $\pd_{\mc O^\fp}M$ are finite. Then, by \cite[Corollary~3.2.5]{Co} and \cite[Theorem~7.2.1(iii)]{Co}, 
we have 
$$\pd_{\mc O}M=\pd_{\mc O_\oa}\Res M
\quad\text{ and }\quad\pd_{\mc O^p}M=\pd_{\mc O^\fp_0}\Res M.$$
The proof now follows from \cite[Theorem~4.1]{CoM2}.
\end{proof}

\begin{cor} Let $M \in \mc O^\fp$ with $\epd_{\mc O^\fp} M<\infty.$
\begin{enumerate}
\item[(1)] 	$M$ is projective in $\mc O^\fp$ if 
and only if  $\epd_{\mc O}M =2\ell(w_0^\fp)$.
\item[(2)] If $\fp=\fg$, then $\epd_{\mc O}M =2\ell(w_0)$.
\end{enumerate}
\end{cor}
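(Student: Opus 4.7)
Both statements are immediate consequences of Theorem~\ref{thm::510}, which asserts that $\pd_{\mc O}M = \pd_{\mc O^\fp}M + 2\ell(w_0^\fp)$ whenever $\pd_{\mc O^\fp}M < \infty$.

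For part~(1), observe that $M$ is projective in $\mc O^\fp$ if and only if $\pd_{\mc O^\fp}M = 0$. Plugging this into the formula of Theorem~\ref{thm::510} gives $\pd_{\mc O}M = 2\ell(w_0^\fp)$, and conversely if $\pd_{\mc O}M = 2\ell(w_0^\fp)$ then $\pd_{\mc O^\fp}M = 0$, so $M$ is projective.

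For part~(2), note that when $\fp = \fg$, the condition that $\fp$ acts locally finitely on a finitely generated module $M \in \mc O^\fp$ forces $M$ to be finite-dimensional and semisimple over $\fg$. Hence $\mc O^\fg$ is a semisimple category, every object is projective, and $w_0^\fp = w_0$. Applying part~(1) (whose hypothesis $\pd_{\mc O^\fp}M<\infty$ holds automatically since $\pd_{\mc O^\fp}M=0$) yields $\pd_{\mc O}M = 2\ell(w_0)$.

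There is no substantive obstacle here; the content is entirely in Theorem~\ref{thm::510} and the elementary observation that $\mc O^\fg$ is semisimple.
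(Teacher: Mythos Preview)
Your proof of part~(1) is correct and matches the paper's argument exactly.

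Your proof of part~(2) contains a genuine error: the claim that $\mc O^{\fg}$ is a semisimple category is false in the super setting. Weyl's complete reducibility theorem does not hold for classical Lie superalgebras; for instance, the Kac module $K(\la)$ is a finite-dimensional indecomposable module that is not simple whenever $\la$ is atypical, so there exist non-split extensions between finite-dimensional simple supermodules. Consequently, you cannot conclude that every object of $\mc O^{\fg}$ is projective, and your appeal to part~(1) breaks down.

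The paper circumvents this by using the finitistic dimension instead of semisimplicity. By \eqref{eq::egfindim} one has $\text{fin.dim}\,\mc O^{\fg}=2\ell(w_0)-2\ell(w_0^{\fg})=0$, so the standing hypothesis $\pd_{\mc O^{\fp}}M<\infty$ forces $\pd_{\mc O^{\fg}}M=0$; then Theorem~\ref{thm::510} gives $\pd_{\mc O}M=2\ell(w_0)$. Note that the finiteness hypothesis is doing real work here, precisely because the category is not semisimple.
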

\begin{proof} 
Claim~$(1)$ follows directly from Theorem~\ref{thm::510}. 
	
If $\fp=\fg$, then $\mc O^\fp$ is the category of finite dimensional modules. In this case, we have
$\text{fin.dim}\,\mc O^\fp =  2\ell(w_0) - 2\ell(w_0^\fp) =0$ 
by \eqref{eq::egfindim}, which implies that $\pd_{\mc O^\fp}M=0$.
Claim~$(2)$ now follows from Theorem~\ref{thm::510}. 
\end{proof}

\subsection{Injective   dimension of modules in $\mc O^\fp$} \label{sect::54} 

Let $\mu$ be a dominant integral weight whose singularity determines the parabolic 
subalgebra $\fp_\oa$.  Following \cite[Section~1.4]{CCC}, there is a parabolic  
subalgebra $\hat \fp$ (in the sense of \cite[Section~2.4]{Ma}) of $\mf g$ such 
that $\hat \fp_\oa$ is determined by $\hat \mu:= -w_0\mu$. For instance, if $\mf g$ 
is of type I, then $\hat \fp= \hat \fp_\oa \oplus \mf g_{-1}$. 
Also, we have $\hat{\mf b} =\mf b^r$ for $\pn$ from Section \ref{sect::32}. 

We recall, see \cite[Section~1.3]{CCC}, that there is a duality 
(i.e. an anti-equivalence) $\bf D: \mc O^\fp\rightarrow \mc O^{\hat \fp}$ such that 
\begin{align} \label{eq::D}
&{\bf D} L^\fp(\la) =L^{\hat \fp}(-w_0\la),  ~{\bf D}\Delta^\fp(\la) = \nabla^{\hat \fp}(-w_0\la), \text{ and }{\bf D}T^\fp(\la) = T^{\hat \fp}(-w_0\la).
\end{align}
We refer   to \cite[Proposition~3.4, Lemma~3.6]{CCC} for more details. 
Combined with the results in Subsection~\ref{sect::52}, we have the following statement.
 
\begin{prop}\label{prop::injdim} 
For any $M\in \mc O^\fp$, we have $ \eid_{\mc O^\fp}M= \epd_{\mc O^{\hat \fp}}{\bf D}M.$
	
	Furthermore, we have the following properties: 
	\begin{itemize}
		\item[(1)]  Suppose that $\epd_{\mc O^{ \fp}} M<\infty$, then 
		\begin{align}
		&\eid_{\mc O^\fp} M = \eid_{\mc O_\oa^{\fp}} \Res M. \label{eq::551}
		\end{align}

		\item[(2)] Suppose that $\eid_{\mc O^{ \fp}} M<\infty$, then
	\begin{align}
	&\eid_{\mc O^\fp}M = \eid_{\mc O}M   -2\ell(w_0^\fp). \label{eq::552}
	\end{align}

		\item[(3)] Suppose that $\g$ is of type I. Then, for any $\la \in \Sigma^+_\fp$, we have
	\begin{align}
	&\eid_{\mc O^\fp}\, P^\fp(\la) =\eid_{\mc O^\fp_\oa}\, P_\oa^{ \fp}(\la), \label{eq::553}\\ 
	&\eid_{\mc O^\fp}\, T^\fp(\la) = \eid_{\mc O^\fp_\oa} \,T_\oa^{  \fp}(\la).  \label{eq::554}
	\end{align}
		\end{itemize}
\end{prop}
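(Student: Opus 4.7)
The plan is to deduce every assertion from the duality ${\bf D}: \mc O^\fp \to \mc O^{\hat \fp}$ recorded in \eqref{eq::D}. Since ${\bf D}$ is a contravariant equivalence of abelian categories, each having enough projectives and injectives, it converts an injective coresolution of $M \in \mc O^\fp$ into a projective resolution of ${\bf D}M$ in $\mc O^{\hat \fp}$, so
\[
\text{Ext}^i_{\mc O^\fp}(N,M) \cong \text{Ext}^i_{\mc O^{\hat \fp}}({\bf D}M,{\bf D}N)
\]
for every $N\in \mc O^\fp$ and $i\geq 0$. Taking the supremum over $N$ yields the main identity
\[
\id_{\mc O^\fp} M = \pd_{\mc O^{\hat \fp}} {\bf D}M.
\]
This is the only genuinely new ingredient; the remaining parts follow by transporting the results of Subsection~\ref{sect::52} across ${\bf D}$.

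For \eqref{eq::552}, apply Theorem~\ref{thm::510} to ${\bf D}M\in \mc O^{\hat \fp}$. Since the even Levi subalgebras of $\fp$ and $\hat \fp$ are conjugate under $-w_0$, one has $\ell(w_0^{\hat \fp})=\ell(w_0^\fp)$; combining this with the main identity, applied both in the parabolic and the unparabolic settings, converts Theorem~\ref{thm::510} for ${\bf D}M$ into \eqref{eq::552} for $M$. For \eqref{eq::553} and \eqref{eq::554}, combine the main identity with \eqref{eq::D}, which gives ${\bf D}P^\fp(\la)=I^{\hat \fp}(-w_0\la)$ and ${\bf D}T^\fp(\la)=T^{\hat \fp}(-w_0\la)$. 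Propositions~\ref{prop::53} and~\ref{thm::32} (applied to $\hat \fp$ in place of $\fp$) then yield
\[
\id_{\mc O^\fp}P^\fp(\la)=\pd_{\mc O^{\hat \fp}}I^{\hat \fp}(-w_0\la)=\pd_{\mc O^{\hat \fp}_\oa}I^{\hat \fp}_\oa(-w_0\la)=\id_{\mc O^\fp_\oa}P^\fp_\oa(\la),
\]
using the analogous classical duality on the even parabolic category in the final equality; the tilting case is identical.

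For \eqref{eq::551}, the key compatibility to establish is $\Res\circ {\bf D} \cong {\bf D}_\oa \circ \Res$ as functors $\mc O^\fp\to \mc O^{\hat \fp}_\oa$, where ${\bf D}_\oa$ denotes the simple-preserving duality on $\mc O^\fp_\oa$; this follows from the explicit construction of ${\bf D}$ recalled in \cite[Section~1.3]{CCC}. Granted this, the main identity together with the proof of Theorem~\ref{thm::510} (which applies \cite[Theorem~7.2.1(iii)]{Co} to give $\pd_{\mc O^{\hat \fp}}{\bf D}M=\pd_{\mc O^{\hat \fp}_\oa}\Res {\bf D}M$ whenever the former is finite) yields
\[
\id_{\mc O^\fp}M=\pd_{\mc O^{\hat \fp}}{\bf D}M=\pd_{\mc O^{\hat \fp}_\oa}\Res {\bf D}M=\pd_{\mc O^{\hat \fp}_\oa}{\bf D}_\oa \Res M=\id_{\mc O^\fp_\oa}\Res M.
\]
The main obstacle is the bookkeeping in the last step: verifying the intertwining $\Res\circ {\bf D} \cong {\bf D}_\oa\circ \Res$ in the super setting and tracking the weight reparametrization $\la\mapsto -w_0\la$ consistently through the chain of identifications behind \eqref{eq::553} and \eqref{eq::554}; everything else is a routine translation via ${\bf D}$ of results already established in Subsection~\ref{sect::52}.
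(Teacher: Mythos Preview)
Your proposal is correct and follows essentially the same route as the paper's proof: the main identity comes from ${\bf D}$ being a contravariant equivalence, part~(1) uses the intertwining $\Res\circ{\bf D}\cong{\bf D}_\oa\circ\Res$ (which the paper cites from the proof of \cite[Theorem~3.7]{CCC}) together with \cite[Corollary~3.2.5, Theorem~7.2.1]{Co}, part~(2) is obtained by transporting Theorem~\ref{thm::510} across ${\bf D}$ and using $\ell(w_0^{\hat\fp})=\ell(w_0^\fp)$, and part~(3) combines \eqref{eq::D} with Propositions~\ref{prop::53} and~\ref{thm::32} and the classical duality (the paper invokes \cite[Theorem~11]{So90} for the last identification). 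The only cosmetic difference is that the paper also records a second proof of \eqref{eq::552} going through \eqref{eq::551} and \cite[Theorem~4.1]{CoM2}.
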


\begin{proof} 
Applying $\bf D$, we have   
	\begin{align}
	&\id_{\mc O^\fp} M =\pd_{\mc O^{\hat \fp}}{\bf D}M.
	\end{align}
Suppose that $\pd_{\mc O^{\hat \fp}}{\bf D}M <\infty$. We note that 
$\Res$ intertwines $\bf D$ and the usual  duality on $\mc O^\fp_\oa$ 
which we also denote by $\bf D$, abusing notation
(cf. the proof of \cite[Theorem~3.7]{CCC}). 
Therefore, from \cite[Corollary~3.2.5]{Co} and \cite[Theorem~7.2.1]{Co},  it follows that 
	$$\pd_{\mc O^{\hat \fp}}{\bf D}M= \pd_{\mc O_\oa^{\hat \fp}}\Res{\bf D}M=\pd_{\mc O_\oa^{\hat \fp}}{\bf D}\Res M=\id_{\mc O_\oa^{\fp}} \Res M.$$ 
	This proves Part $(1)$ and Equation \eqref{eq::551}.  
	 
 To prove Part $(2)$, we note that, from Theorem \ref{thm::510} and  the proof of \cite[Theorem~3]{Ma}, we obtain $\id_{\mc O^\fp} M<\infty \Rightarrow \id_{\mc O}M <\infty$. Therefore we may assume that both $\id_{\mc O} M$ and $\id_{\mc O^\fp}M$ are finite. From Equation \eqref{eq::551} and \cite[Theorem 4.1]{CoM2}, we have 
	\begin{align}
	&\id_{\mc O^\fp} M = \id_{\mc O_\oa^{\fp}} \Res M = 
	\id_{\mc O_\oa} \Res M-2\ell(w_0^\fp) =\id_{\mc O} M -2\ell(w_0^\fp).
	\end{align} This proves Equation \eqref{eq::552} and Part $(2)$.
	An alternative proof of Part $(2)$ is due the following observation:
	\begin{align*}
	&\pd_{\mc O^\fp}M = \pd_{\mc O^{\hat \fp}}{\bf D}M = \pd_{\mc O}{\bf D}M   -2\ell(w_0^{\hat \fp}) \\
	&=  \pd_{\mc O}{\bf D}M   -2\ell(w_0^\fp) = \pd_{\mc O^\fp}{\bf D}M=\id_{\mc O^\fp}M.
	\end{align*} 
	
	 It remains to prove Part $(3)$. 
By \eqref{eq::D}, we have $$
\id_{\mc O^\fp} P^\fp(\la) = \pd_{\mc O^{\hat \fp}} I^{\hat \fp}(-w_0\la),~
\id_{\mc O^\fp} T^\fp(\la) = \pd_{\mc O^{\hat \fp}} T^{\hat \fp}(-w_0\la).$$
 It follows that 
 \[\id_{\mc O^\fp} P^\fp(\la) = \pd_{\mc O_\oa^{\hat \fp}} I_\oa^{\hat \fp}(-w_0\la)
 =\pd_{\mc O_\oa^{\fp}} I_\oa^{ \fp}(\la)=\id_{\mc O_\oa^{\fp}} P_\oa^{ \fp}(\la), \]
  \[ \id_{\mc O^\fp} T^\fp(\la) = \pd_{\mc O_\oa^{\hat \fp}} T^{\hat \fp}_\oa(-w_0\la)= 
  \pd_{\mc O_\oa^{\fp}} T_\oa^{ \fp}(\la), \]
 by  Proposition \ref{prop::53}, Theorem \ref{thm::32} and \cite[Theorem 11]{So90}. 
\end{proof}

\subsection{A sufficient condition for infiniteness of projective dimension}  \label{sect::55}
We recall the  definitions of the {\em associated variety} from \cite{DS}. 
Let $M\in \g$-Mod and $x\in \mf g_{\ov 1}$ with $[x,x]=0$.
Define $$M_x:= \text{ker}_x/xM,$$ where $\text{ker}_x$ is the kernel 
of the map $x: M\rightarrow M$ sending $m$ to $xm$, where $m\in M.$ 
Then the associated variety $X_M$ of $M$ is defined as 
\[X_M:=\{x\in \g_\ob|~[x,x]=0 \text{ and } M_x\neq 0\}.\]

The following statement is \cite[Lemma~2.2(i)]{DS}:

\begin{lem}\label{lem::DS}
Let $M\in \g\emph{-Mod}$ be a summand of $\Ind N$, for some $N\in \mf g_\oa\emph{-Mod}$. Then $X_M=0$.
\end{lem}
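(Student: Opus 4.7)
The plan is to prove the stronger assertion that $(\Ind N)_x=0$ for every nonzero $x\in\mf g_\ob$ with $[x,x]=0$; since $M$ is a direct summand of $\Ind N$ as a $\mf g$-module, writing $\Ind N=M\oplus M'$ gives $(\Ind N)_x=M_x\oplus M'_x$, so the stronger statement forces $M_x=0$, i.e.\ $X_M\subseteq\{0\}$. For a fixed $x\neq 0$ with $[x,x]=0$, the relation $x^2=\tfrac{1}{2}[x,x]=0$ in $U(\mf g)$ ensures that left multiplication $d_x:\Ind N\to \Ind N$ is a differential, and the aim becomes to show that $d_x$ is acyclic.

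Next I would extend $x$ to an ordered basis $y_1=x,y_2,\ldots,y_d$ of $\mf g_\ob$ and invoke the super PBW theorem: the ordered monomials $y_I=y_{i_1}\cdots y_{i_k}$, indexed by subsets $I=\{i_1<\cdots<i_k\}\subseteq\{1,\ldots,d\}$, form a basis of $U(\mf g)$ as a free right $U(\mf g_\oa)$-module. Hence $\{y_I\otimes v_j\}$ is a basis of $\Ind N=U(\mf g)\otimes_{U(\mf g_\oa)}N$ for any vector-space basis $\{v_j\}$ of $N$. On this basis the action of $d_x$ is transparent: if $1\notin I$ then $y_1\cdot y_I$ is already in ordered form $y_{\{1\}\cup I}$, so $d_x(y_I\otimes v_j)=y_{\{1\}\cup I}\otimes v_j$; if $1\in I$ then $y_1\cdot y_I$ begins with $y_1^2=0$, so $d_x(y_I\otimes v_j)=0$. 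Under the vector-space identification $\Ind N\cong \Lambda(\mf g_\ob)\otimes N$, $d_x$ is thus nothing but $(x\wedge-)\otimes \id_N$, and both its kernel and its image equal $x\wedge\Lambda(\mathrm{span}(y_2,\ldots,y_d))\otimes N$. Therefore $d_x$ is acyclic and $(\Ind N)_x=0$.

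The only genuine obstacle is the bookkeeping in the second step: one has to verify that $[x,x]=0$ really does translate, via the defining super-commutation relations, to $x^2=0$ in $U(\mf g)$, and that no lower-order correction terms appear when writing $y_1\cdot y_I$ in the PBW basis in the case $1\in I$, the point being that the "would-be" corrections already vanish because their leading factor is $y_1^2=0$. Once this is recorded, the Koszul-style cancellation that finishes the argument is immediate.
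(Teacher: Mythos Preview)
Your argument is correct. The paper does not actually supply a proof of this lemma; it simply records the statement and attributes it to \cite[Lemma~2.2(i)]{DS}. Your PBW/Koszul computation---choosing $y_1=x$ as the first odd basis vector so that left multiplication by $x$ on $\Ind N\cong\Lambda(\mf g_\ob)\otimes N$ becomes $(x\wedge{-})\otimes\id_N$---is exactly the standard proof and is essentially how Duflo and Serganova establish it as well. One minor comment: your last paragraph slightly overstates the difficulty of the ``lower-order correction terms''. In the case $1\in I$ there are no corrections to track at all, since $y_1\cdot y_I = y_1^2\cdot y_{I\setminus\{1\}}$ already as a product in $U(\mf g)$, and $y_1^2=\tfrac12[x,x]=0$ kills it outright; no reordering is needed.
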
 

\begin{lem} \label{lem::36}
Let $0\rightarrow A \rightarrow B  \xrightarrow{f}  C  \rightarrow 0$ be a short exact 
sequence in $\g\emph{-Mod}$. Let $x\in \mf g_\ob$ be nonzero with $[x,x]=0$. 
Suppose that $A_x =B_x =0$. Then we have  $C_x =0.$
	
In particular, if $X_A=X_B=0$, then $X_C=0$.
\end{lem}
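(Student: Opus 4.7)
My plan is to observe that, since $[x,x]=0$ in the super-bracket means $2x^2=0$ in $U(\g)$, the operator $x$ squares to zero on any $\g$-module $M$. Thus $(M,x)$ becomes a $\mathbb{Z}/2$-graded complex (or simply a differential module) and $M_x=\ker x/xM$ is nothing but its cohomology. A short exact sequence of $\g$-modules is automatically a short exact sequence of such differential modules, so the standard homological algebra produces a (six-term cyclic) long exact sequence
\[\cdots\longrightarrow A_x\longrightarrow B_x\longrightarrow C_x\longrightarrow A_x\longrightarrow B_x\longrightarrow C_x\longrightarrow\cdots\]
Feeding in $A_x=B_x=0$ immediately forces $C_x=0$.

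If the reader prefers a bare-hands proof (which seems to be the style of the paper), I would carry out the diagram chase directly. Identify $A$ with a submodule of $B$ via the injection. Take $c\in\ker_x^C$ and lift it to $b\in B$ along the surjection $f$. Then $f(xb)=xf(b)=xc=0$, so $xb\in A$; since $x(xb)=0$ and $A_x=0$, we can write $xb=xa$ for some $a\in A$. Then $b-a\in\ker_x^B=xB$ by the assumption $B_x=0$, so $b-a=xb'$ for some $b'\in B$. Applying $f$ gives $c=f(b-a)=xf(b')\in xC$, proving $\ker_x^C\subseteq xC$, i.e.\ $C_x=0$.

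The ``In particular'' part is immediate: if $X_A=X_B=0$, then $A_x=B_x=0$ for every nonzero $x\in\g_\ob$ with $[x,x]=0$, hence $C_x=0$ for every such $x$, giving $X_C=0$. The only minor subtlety, and really the only place any care is needed, is the very first step — checking that $x^2=0$ in $U(\g)$ from $[x,x]=0$ — but this is just the definition of the supercommutator for odd elements; everything else is routine diagram chasing or a direct citation of the long exact sequence for a differential module. I do not anticipate any genuine obstacle.
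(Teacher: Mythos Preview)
Your bare-hands argument is correct and is essentially word-for-word the paper's own proof (with only cosmetic differences in variable names). The paper does not mention the long exact sequence viewpoint you offer as an alternative; that is a slightly more conceptual packaging of the same diagram chase, but both routes amount to the identical computation.
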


\begin{proof}
Let $c\in C$ be such that $xc=0$. We want to show that there exists 
$b\in B$ such that $c=xf(b)$, which implies that $C_x =0$. 
To see this, we first pick $d\in B$ such that $f(d)=c$. 
Then $f(xd)=0$ implies that $xd\in A$. 
	
Now $x(xd) =x^2d =0$ (due to $x^2=0$) implies $xd\in  xA$ and so 
there is $a\in A$ such that $xd =xa$ since $A_x =0$. 
Finally, we note that $x(d-a)=0$ implies that $d-a \in xB$ since $B_x=0$. 
Therefore there is $b\in B$ such that $d-a =xb$. We now apply $f$ and obtain 
$$c=f(d)=f(d-a)=xf(b),$$ as desired.
\end{proof}

The following observation is known (see, e.g., \cite[Section 4, proof of Theorem 4.1]{CS} and \cite[Lemma 2.2]{DS}).

\begin{cor} \label{cor::59} 
Suppose $\tilde{\mc C}$ and $\mc C$ are respective  subcategories of $\g$\emph{-Mod} and 
$\g_\oa$\emph{-Mod} satisfying assumptions \eqref{eq::111}-\eqref{eq::113}. 
If $M \in\tilde{\mc C}$ is such that $X_M\neq 0$, then $\epd_{\tilde{\mc C}} M=\infty$. 	
			
In particular,	if $M \in \mc O^\fp$ is such that $X_M\neq 0$, 
then $\epd_{\mc O^\fp} M=\infty$.
\end{cor}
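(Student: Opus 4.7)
The plan is to prove the contrapositive: if $\pd_{\tilde{\mc C}}M < \infty$, then $X_M = 0$. The two main ingredients will be Lemma~\ref{lem::DS}, which says that any summand of an induced module $\Ind N$ has trivial associated variety, and Lemma~\ref{lem::36}, which says that the vanishing of $({}_-)_x$ propagates through a short exact sequence from the outer terms to the quotient on the right.

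Assume $M$ admits a finite projective resolution
$$0 \to P_d \to P_{d-1} \to \cdots \to P_0 \to M \to 0$$
in $\tilde{\mc C}$. Just as in the proof of Theorem~\ref{thm::findimthm}, every projective object of $\tilde{\mc C}$ is a direct summand of $\Ind Q$ for some projective $Q \in \mc C$ (which exists by assumption~\eqref{eq::112}). Hence, by Lemma~\ref{lem::DS}, each $P_i$ satisfies $X_{P_i}=0$.

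I would then split the resolution into the short exact sequences
$$0 \to K_{i+1} \to P_i \to K_i \to 0,$$
with $K_0 := M$ and $K_d := P_d$. Starting from $X_{K_d} = X_{P_d} = 0$ and applying Lemma~\ref{lem::36} inductively (using $X_{K_{i+1}} = X_{P_i} = 0 \Rightarrow X_{K_i} = 0$), descending induction on $i$ yields $X_{K_i} = 0$ for every $i$, and in particular $X_M = X_{K_0} = 0$, contradicting the hypothesis $X_M \neq 0$.

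For the in-particular statement, one only has to verify that the pair $\tilde{\mc C} = \mc O^\fp$ and $\mc C = \mc O_\oa^\fp$ satisfies assumptions \eqref{eq::111}--\eqref{eq::113}: the three functors $\Ind$, $\Coind$, $\Res$ restrict well between these categories, both have enough projectives and injectives, and the global dimension of $\mc O_\oa^\fp$ is finite (indeed, computed in~\eqref{eq::egfindim}). I do not foresee any real obstacle here; the whole argument reduces to a clean chain of two-term applications of the preceding two lemmas, with the only slightly delicate point being to recall that projectives in $\tilde{\mc C}$ are, up to summands, induced from $\mc C$, which is exactly what makes Lemma~\ref{lem::DS} applicable.
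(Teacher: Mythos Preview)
Your proposal is correct and follows essentially the same approach as the paper's own proof: both argue by contrapositive, use that projectives in $\tilde{\mc C}$ are summands of induced modules (hence have trivial associated variety by Lemma~\ref{lem::DS}), split the finite projective resolution into short exact sequences, and apply Lemma~\ref{lem::36} by descending induction to conclude $X_M=0$. The only cosmetic difference is that the paper cites \cite[Proposition~2.2.1]{Co} for the summand-of-induced fact, whereas you refer back to the proof of Theorem~\ref{thm::findimthm}; both are fine.
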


\begin{proof}
Suppose that there exists a finite projective resolution of $M$ in $\tilde{\mc C}$:
\[0 \xrightarrow{f^{d+1}} P^d \xrightarrow{f^d} \cdots \xrightarrow{} P^2\xrightarrow{f^2}  P^1\xrightarrow{f^1}  P^0 \rightarrow M \xrightarrow{f^0}  0.\]
	
Let $0\leq s\leq d$. By \cite[Proposition~2.2.1]{Co}, every projective module $P^s$ is a direct summand of a module induced from $\mc C$ to $\tilde{\mc C}$. By Lemma \ref{lem::DS} we 
thus have $X_{P^s} = 0$. 
	 
Define $K^s$ to be the kernel of $f^s$. For any $s$, we have an exact sequences 
$$0\rightarrow K^s \rightarrow P^{s-1} \rightarrow K^{s-1}\rightarrow 0.$$ 
In particular, the fact that $K^d=P^d$ implies that $K^{d-1}_x=0$ by Lemma \ref{lem::36}.  By induction on $s$, we may conclude that $M_x=K^0_x =0$.  The claim of the lemma follows. 
\end{proof}

\begin{ex}
If $\mf g_{\ov 1}$ contains a non-zero $x$ such that $[x,x]=0$, 
then it is obvious (see also \cite[Lemma 2.2]{DS}) 
that the associated variety $X_\C$ of  the trivial module $\C$ is non-zero. 
Therefore we may conclude that both $\pd_{\g\text{-Mod}}\C$ and $\pd_{\mc O}\C$ are infinite.
\end{ex}

\begin{rem}
It is worth pointing out that the condition $X_M=0$ is independent of the choice of the parabolic subalgebra $\fp$  (compare with Theorem \ref{thm::510}). It is natural to ask the following question:
	
{\bf Question:} Let $M\in \mc O$. Is it true that $\pd_{\mc O} M<\infty \Leftrightarrow X_M=0?$
	 
This question has the affirmative answer for the general linear Lie superalgebra 
$\gl(m|n)$ by \cite[Theorem 4.1]{CS}.
\end{rem}

The following lemma addresses infiniteness of projective dimension 
for $\fp$-highest weight modules.

\begin{cor} \label{coro::infproj} 
Suppose that there is a weight $\alpha$ of $\mf g_1$ such that $-\alpha$ is not a weight of $\Lambda^{\bullet} \mf g_{-1}\otimes U(\mf n^-_\oa).$ Then the associated variety
of every non-zero quotient of $\Delta^\fp(\la)$ is non-zero.
Consequently, every such quotient has infinite projective dimension in both $\mc O$ and 
$\mc O^\fp$, for any $\la \in \Sigma_\fp^+$.
	
In particular, every block of $\mc O^\fp$ contains infinitely many simple objects.
\end{cor}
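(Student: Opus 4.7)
The plan is to apply Corollary~\ref{cor::59} by exhibiting, for every non-zero quotient $M$ of $\Delta^\fp(\la)$, an odd element $x$ with $[x,x]=0$ and $M_x\neq 0$, which then forces $X_M\neq 0$. The natural candidate is a non-zero weight vector $x\in\mf g^\alpha\subseteq\mf g_1$; since we are in the type I setting of Section~\ref{subsect::21}, the subspace $\mf g_1$ is abelian, so $[x,x]=0$ automatically and $x$ is admissible in the definition of the associated variety.

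Let $v\in\Delta^\fp(\la)$ denote the canonical highest weight generator and $\bar v\in M$ its image, which is non-zero because $M$ is generated by it. Since $\alpha$ is a weight of $\mf g_1\subseteq\mf n$, we have $xv=0$ and hence $\bar v\in\ker_x$. The crux is to show $\bar v\notin xM$, for which I would argue by weight considerations: as $x$ has weight $\alpha$, the $\la$-weight part of $xM$ lies in $x\cdot M_{\la-\alpha}$, so it suffices to show $M_{\la-\alpha}=0$. Using the PBW isomorphism
\[
\Delta(\la)\;\cong\; U(\mf n^-_\oa)\otimes \Lambda^\bullet\mf g_{-1}\otimes\C_\la
\]
of $\mf h$-modules, every weight of $\Delta(\la)$, hence of its quotient $\Delta^\fp(\la)$ and of any further quotient $M$, takes the form $\la+\gamma$ with $\gamma$ a weight of $\Lambda^\bullet\mf g_{-1}\otimes U(\mf n^-_\oa)$. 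The hypothesis of the corollary excludes exactly $\gamma=-\alpha$, so $M_{\la-\alpha}=0$, giving $\bar v\in\ker_x\setminus xM$; therefore $M_x\neq 0$ and $X_M\neq 0$. Corollary~\ref{cor::59} applied inside $\mc O^\fp$ then yields $\pd_{\mc O^\fp}M=\infty$, and since $X_M$ is intrinsic to $M$ as a $\mf g$-module, the same corollary applied inside $\mc O$ gives $\pd_{\mc O}M=\infty$.

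For the final ``in particular'', I would argue by contradiction. If a block $\mc O^\fp_\la$ contained only finitely many simples $L(\mu_1),\ldots,L(\mu_k)$, then the composition factors of each parabolic Verma $\Delta^\fp(\mu_i)$ would lie among them, and since their multiplicities are bounded by the finite-dimensional weight spaces of $\Delta^\fp(\mu_i)$, each $\Delta^\fp(\mu_i)$ would have finite length. The projective covers $P^\fp(\mu_i)$, equipped with their $\Delta^\fp$-flags, would then also be finite-length, identifying the block with the module category of a finite-dimensional algebra in which the $\Delta^\fp(\mu_i)$ play the role of standard modules. The main obstacle is to leverage this structure into finite global dimension of the block, which would contradict the infinite projective dimension of each $L(\mu_i)$ just established; I expect to achieve this via a Ringel-duality argument exploiting the finite projective dimension of the tilting modules $T^\fp(\mu_i)$ provided by Proposition~\ref{thm::32}.
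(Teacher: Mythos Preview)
Your argument for the first assertion is essentially the paper's: both pick a root vector $X_\alpha\in\mf g_1$ (automatically self-commuting, since $\mf g_1$ is abelian in the type~I setting), observe that the image $\bar v$ of the highest weight vector lies in $\ker_{X_\alpha}$, and rule out $\bar v\in X_\alpha M$ because $\la-\alpha$ is not a weight of $M$ by the hypothesis. The only cosmetic difference is that the paper phrases this contrapositively (assume $\pd M<\infty$, deduce $M_{X_\alpha}=0$ from Corollary~\ref{cor::59}, then reach the same weight contradiction).

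For the ``in particular'' clause, both you and the paper argue by contradiction from a hypothetical finite block, but the paper's route is sharper and more concrete than your sketch. Rather than aiming for finite global dimension of the whole block, the paper observes that a block with finitely many simples must contain a \emph{simple tilting module}: the cited combination of tilting existence, BGG reciprocity, and Ringel duality endows the block with a highest-weight structure on a finite poset, and then the minimal weight $\mu$ satisfies $\Delta^\fp(\mu)=L(\mu)=\nabla^\fp(\mu)=T^\fp(\mu)$. Proposition~\ref{thm::32} immediately gives $\pd_{\mc O^\fp}L(\mu)=\pd_{\mc O^\fp}T^\fp(\mu)=\pd_{\mc O^\fp_\oa}T^\fp_\oa(\mu)<\infty$, contradicting the first part. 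Your proposed route would also succeed, but more directly than you suggest: the finite-dimensional algebra you describe, with the $\Delta^\fp(\mu_i)$ as standard modules, is quasi-hereditary and hence already has finite global dimension --- so the Ringel-duality detour and Proposition~\ref{thm::32} are not needed at all on that path. Conversely, if you do want to invoke Proposition~\ref{thm::32}, the paper's single simple tilting suffices and you need not chase global dimension.
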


\begin{proof} 
Assume that $M$ is a non-zero quotient of $\Delta^\fp(\la)$
of finite projective dimension. 
Let $X_{\alpha} \in \g_1$ be a root vector of  root $\alpha$. 
	
Pick a non-zero highest weight vector $v\in M$.   By Corollary \ref{cor::59}, we have $M_{X_{\alpha}}=0$. Therefore $X_{\alpha} v=0$ implies that  there is $m\in M$ such that $v=X_{\alpha}m$.
However,  $\la -\alpha$ is not a weight of $M$, a contradiction. 
	
Suppose that $\mc O^\fp_\la$ is a block of $\mc O^\fp$ containing only finitely many simples. 
Combining existence of tilting modules  \cite[Section 4.3]{Ma} 
(also, see \cite[Theorem 3.5]{CCC}), the Ringel duality \cite[Section 5.4]{Ma} 
(also, see, \cite[Corollary 3.8]{CCC}) and  the BGG reciprocity  \cite[Lemma 3.3]{CCC} 
for $\mc O^\fp$, we may conclude that $\mc O^\fp_\la$ contains a simple tilting module. 
This is a contradiction to Proposition \ref{thm::32}. This completes the proof. 
\end{proof}
 
\begin{ex}
Let $\mf k$ be classical of type I such that there exists $d\in \mc Z(\mf k)$ and 
$d$ acts on $\mf k_{1}$ as a non-zero scalar. Set $\mf g:=\mf g_\oa\oplus \mf g_{1}$. 
Then every simple module in the category $\mc O$ for $\mf g$ has infinite projective dimension. 
\end{ex} 

We are going to apply Corollary \ref{coro::infproj} to the periplectic Lie 
superalgebra $\pn$ later in Proposition \ref{prop::512}.

\section{Examples: projective dimensions of structural supermodules over $\pn$ and  $\mf{osp}(2|2n)$} \label{Sect::6}
	
\subsection{Example I: projective dimension of  modules for $\mf{osp}(2|2n)$} \label{Sect::projdimosp}

\subsubsection{}
The orthosymplectic Lie superalgebra $\mf{osp}(2|2n)$ is the 
following subsuperalgebra of $\mf{gl}(2|2n)$:
\begin{equation}
\mf{osp}(2\vert 2n)=
\left\{ \left( \begin{array}{cccc} c &0 & x &y\\
0 & -c& v & u\\
-u^t& -y^t & a &b\\
v^t &x^t & c& -a^t \\
\end{array} \right):
\begin{array}{c}
c\in \C;\,\, x,y,v,u\in \C^{n};\\
a,b,c\in \C^{n^2};\\
b=b^t,\,\, c=c^t.
\end{array}
\right\}. \label{eq::osp}
\end{equation}
The even part $\mf{osp}(2|2n)_\oa$ of $\mf{osp}(2|2n)$ is isomorphic to 
$\C \oplus \mf{sp}(n)$, and the odd part is isomorphic to $\C^2\otimes \C^{2n}$. 
We refer the reader to \cite[Section 1.1.3]{ChWa12} and \cite[Section 2.3]{Mu12} 
for more details.  Recall from  Section \ref{sect::321} that we denote by 
$E_{ab}$ the elementary matrices. We now define 
$$H_\vare:=E_{11}-E_{22},~H_i:=E_{2+i,2+i}-E_{2+i+n,2+i+n},
\quad \text{ for }1\leq i\leq n.
$$
The Cartan subalgebra $\mf h$ of $\mf{osp}(2|2n)$ 
is spanned by $H_\vare$ and $H_1,H_2,~\ldots, H_n$. We let 
$\{\vare,\delta_1,\delta_2,\ldots,\delta_n\}$ be 
the basis of $\mf h^\ast$ dual to  $\{H_\vare,H_1,H_2,\ldots, H_n\}$.
The  bilinear form $({}_-,{}_-):\mf h^\ast \times \mf h^\ast \rightarrow \C$ is given by $(\vare,\vare) =1$ and $(\delta_i,\delta_j) =-\delta_{i,j}$, for $1\leq i,j\leq n.$
 
We define the set $\Phi^+_\oa$ of even positive roots  and 
the set $\Phi^+_\ob$ of odd positive roots  by 
\begin{align}
&\Phi^+_\oa:=\{\delta_i \pm\delta_j, 2\delta_p|~1\leq i< j <n, ~1\leq p \leq n \}.\\
&\Phi_\ob^+:=\{\vare \pm \delta_p|~1\leq p\leq n\}.
\end{align} 
This gives rise to a triangular decomposition 
$\mf g=\mf n^+\oplus \mf h \oplus \mf n^-$ with Borel subalgebra $\mf b:= \mf b_\oa \oplus \mf g_1.$

\subsubsection{Simple supermodules over $\mf{osp}(2|2n)$}
The aim of this subsection is to study the projective dimension of simple 
supermodules over arbitrary basic classical Lie superalgebras of type I including $\mf{osp}(2|2n)$.
  
We recall the notation of typical weights for basic classical Lie superalgebras 
of type I, see, e.g., \cite[Section 2.4]{Gor2}. 
Let $X^{-}\in \Lambda^{\text{max}}\mf g_{-1}$ and 
$\quad X^{+}\in \Lambda^{\text{max}}\mf g_{1}$ be non-zero vectors. 
Then by   \cite[Lemma 6.5]{CM} (also, see, \cite[Section 4]{Gor1}) it follows that  
\begin{align}
&X^{+} X^- = \Omega + \sum_i x_i r_i y_i, \label{eq::XYeq}
\end{align}
for some $x_i \in \Lambda(\mf g_{ -1})\backslash \mathbb{C}$, 
$y_i \in \Lambda(\mf g_{1})\backslash \mathbb{C}$, 
$r_i\in U(\mf g_{\oo})$ and $\Omega \in Z(\mf g_\oo)$. 
A weight $\la \in \h^\ast$ is typical if its evaluation at the 
Harish-Chandra projection of $\Omega$ is non-zero.   
It was shown in \cite[Theorem 1]{Ka2} that the typicality of   
$\la$ is equivalent to the simplicity of the Kac module of 
highest weight $\la$. A more general definition of {\em typical Kac modules} 
can be found in \cite[Section 6.2]{CM}.
   
The following proposition shows that all simple supermodules of atypical 
blocks over basic classical Lie superalgebras have infinite projective dimension 
in $\mc O$. This generalizes \cite[Proposition~5.14]{CS} where the case
of the superalgebra 
$\gl(m|n)$ was considered.
	
\begin{prop} \label{thm::basicgpdsimple}
Let $\mf g$ be a basic classical Lie superalgebra of type I. 
Suppose that $\la$ is atypical. Then the associated variety 
$X_{L(\la)}$ is non-zero. In particular, if  $L(\la) \in \mc O^\fp$,
for some $\fp$, then $\epd_{\mc O^\fp}L(\la)=\infty.$
\end{prop}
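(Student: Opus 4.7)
The strategy is to show $X_{L(\la)}\neq 0$ by exhibiting a self-commuting odd root vector $x\in \mf g_{\ov 1}$ with $L(\la)_x=\ker_x/xL(\la)\neq 0$; Corollary~\ref{cor::59} then yields the infinite-projective-dimension claim. The starting point is to evaluate the identity~\eqref{eq::XYeq} on the highest-weight vector $v_\la\in L(\la)$: each $y_i\in\Lambda(\mf g_1)\setminus\C$ is a nontrivial product of positive odd root vectors and hence annihilates $v_\la$, while atypicality of $\la$ forces $\Omega v_\la=0$. Consequently,
\[X^+X^-v_\la=0 \quad\text{in } L(\la).\]

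Next I would fix an odd positive root $\beta$ at which $\la$ is atypical (for basic type~I, $\Omega$ factors up to a nonzero constant as $\prod_{\gamma\in\Phi_{\ov 1}^+}\langle\la+\rho,\gamma\rangle$, so atypicality of $\la$ supplies such a $\beta$) and set $x:=e_\beta\in\mf g_1\subset\mf g_{\ov 1}$. Since $\mf g_1$ is abelian in type~I, $[x,x]=0$. Factoring $X^+=\pm e_\beta\cdot X^+_{\widehat\beta}$, where $X^+_{\widehat\beta}$ denotes the ordered product of the remaining positive odd root vectors, the vanishing above rewrites as $e_\beta\cdot u=0$ for $u:=X^+_{\widehat\beta}X^-v_\la\in L(\la)$. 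Thus $u\in\ker_{e_\beta}$, and the task reduces to proving (a)~$u\neq 0$ in $L(\la)$, and (b)~$u\notin e_\beta L(\la)$; granting these, $u$ represents a nonzero class in $L(\la)_{e_\beta}$, so $X_{L(\la)}$ contains $e_\beta$ and Corollary~\ref{cor::59} delivers $\pd_{\mc O^\fp}L(\la)=\infty$.

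Both (a) and (b) are the heart of the argument, and I would tackle them via the Duflo--Serganova philosophy of passing to the rank-one sub-superalgebra $\mf s_\beta=\langle e_\beta,f_\beta,h_\beta\rangle\cong\mathfrak{sl}(1|1)$ (modulo centre). For (a), lift $u$ to the Kac module $K(\la)=\Lambda(\mf g_{-1})\otimes L_\oa(\la)$, where it is a nonzero PBW basis vector; atypicality at $\beta$ means $h_\beta$ acts as the shifted-$\rho$ scalar $0$ on $v_\la$, which constrains the $\mf s_\beta$-restriction of $L(\la)$ sufficiently to preclude $u$ from being absorbed into the maximal submodule of $K(\la)$. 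For (b), a weight-multiplicity comparison in the $\mf s_\beta$-restriction rules out any preimage of $u$ under $e_\beta$. The main obstacle is precisely this translation of the global vanishing $X^+X^-v_\la=0$ into the root-vector-level statement $L(\la)_{e_\beta}\neq 0$; the argument generalizes the approach of \cite[Proposition~5.14]{CS} from $\gl(m|n)$ to arbitrary basic classical Lie superalgebras of type~I.
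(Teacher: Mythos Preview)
Your proposed element $u=X^+_{\widehat\beta}X^-v_\la$ is identically zero in $L(\la)$, so step~(a) fails outright. Indeed, for type~I superalgebras the socle of the Kac module $K(\la)$ is exactly the submodule generated by $X^-v'_\la$ (see \cite[Lemma~3.2]{CM}); when $\la$ is atypical this socle is proper, hence lies in the kernel of the surjection $K(\la)\twoheadrightarrow L(\la)$, and therefore $X^-v_\la=0$ in $L(\la)$. Your argument then reads $u=X^+_{\widehat\beta}\cdot 0=0$. The claim that the lift of $u$ is a ``nonzero PBW basis vector'' in $K(\la)$ is not the point: what matters is its image in $L(\la)$, and that image vanishes. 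The subsequent appeal to the $\mathfrak{sl}(1|1)$-restriction cannot repair this, because there is nothing nonzero left to analyse.

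The paper's proof avoids this collapse by working with negative odd root vectors rather than positive ones, and by exploiting the \emph{first} place where a partial product vanishes. One fixes an ordering $\alpha_1,\dots,\alpha_d$ of $\Phi^+_{\ov 1}$ compatible with a chain of odd reflections from $\mf b$ to $\mf b_\oa\oplus\mf g_{-1}$, with root vectors $Y_i\in\mf g_{-1}^{-\alpha_i}$. As you observed (in different notation), $Y_d\cdots Y_1 v=0$ in $L(\la)$. One then takes the minimal $\ell$ with $Y_\ell\cdots Y_1 v\neq 0$ but $Y_{\ell+1}Y_\ell\cdots Y_1 v=0$. The odd-reflection lemma (Lemma~\ref{lem::29}) forces $w:=Y_\ell\cdots Y_1 v$ to be a highest weight vector for both $\mf b^{(\ell)}$ and $\mf b^{(\ell+1)}$; in particular the weight space of $L(\la)$ at weight $\mathrm{wt}(w)+\alpha_{\ell+1}$ is zero, so $w\notin Y_{\ell+1}L(\la)$. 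Hence $L(\la)_{Y_{\ell+1}}\neq 0$, with $Y_{\ell+1}\in\mf g_{-1}$ (abelian, so self-commuting). The specific $\ell$ depends on $\la$ and is not determined in advance by a choice of atypical root $\beta$; this ``first vanishing'' step is precisely what is missing from your outline.
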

	
Before we prove Proposition \ref{thm::basicgpdsimple}, we need to 
recall the notion of odd reflection for basic classical Lie superalgebras, 
see, e.g., \cite[Lemma 1]{PS89}, \cite[Section 1.4]{ChWa12} and 
\cite[Section 3]{Mu12}, for more details.
	
Since $\mf g$ is contragradient,  the set of roots of $\mf g_{-1}$ can be obtained 
from the set of roots of $\mf g_{1}$ by multiplying with the scalar  $-1$. 
Set $d:=\text{dim}\mf g_1=\text{dim}\g_{-1}$. 
By \cite[Lemma 1.30, Proposition 1.32]{ChWa12} (also, see \cite[Theorem 3.13]{Mu12}),
the positive root system given by the Borel subalgebra $\mf b_\oa\oplus\mf g_{-1}$ 
can be obtained from that of  $\mf b$ via a sequence of odd reflections. 
Namely, we can fix an ordering of   positive odd roots 
\begin{align}
&\{\alpha_1, \alpha_2, \ldots, \alpha_d\} = \Phi^+_\ob \label{eq::oddfleordering}
\end{align} 
such that the following conditions are satisfied: 
\begin{itemize}
\item[(1)] For any $1\leq i\leq d$, the set  
$R^{i} =\Phi^+_\oa \cup \{-\alpha_1,\ldots ,-\alpha_{i}\} 
\cup  \{\alpha_{i+1},\ldots, \alpha_d\}$ forms a positive root system. 
\item[(2)]  We set $R^{-1}:=R^0$. Then $\alpha_i$ is a 
simple root in $R^{i-1}$ and $-\alpha_i$ is a simple 
root in $R_{i}$, for any $1\leq i\leq d$.
\end{itemize}
		
Let us fix root vectors $X_i\in \mf g_1\cap \mf g^{\alpha_i}$ and 
$Y_i \in \mf g_{-1}\cap \mf g^{-\alpha_i}$, for $1\leq i \leq n$. 
Also,	let $\mf b^{(i)}$ denote the Borel subalgebra corresponding 
to $R^i$, namely, $\mf b^{(i)}_\oa = \mf b_\oa$ and $\mf b^{(i)}_\ob$ 
is generated by all root vectors of odd roots in $R^i$.  
By \cite[Lemma 1.40]{ChWa12}, we have the following lemma: 

\begin{lem} \label{lem::29}
Suppose that $L\in \mc O$ is a simple supermodule having $\mf b^{(i)}$-highest 
weight  $\mu\in \h^\ast$. Let $v\in L$ be a $\mf b^{(i)}$-highest weight vector. 
\begin{itemize}
\item[(1)] 	If $Y_{i+1} v=0$, then $v$ is a  $\mf b^{(i+1)}$-highest weight vector of $L.$
\item[(2)]  If  $Y_{i+1} v\neq 0$, then $Y_{i+1} v$  is a $\mf b^{(i+1)}$-highest 
weight vector of $L$.
\end{itemize}
\end{lem}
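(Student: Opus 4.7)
The plan is to directly verify the annihilation conditions defining a $\mf b^{(i+1)}$-highest weight vector, exploiting the fact that $\mf b^{(i)}$ and $\mf b^{(i+1)}$ differ only in the single odd simple direction $\alpha_{i+1}$. Since $R^{i+1}=(R^{i}\setminus\{\alpha_{i+1}\})\cup\{-\alpha_{i+1}\}$, the two Borels share the same $\mf n_\oa$ and the same odd positive root vectors $X_{i+2},\ldots,X_d$ and $Y_1,\ldots,Y_i$; the only distinction is that $\mf b^{(i)}$ carries $X_{i+1}$ on the positive side, while $\mf b^{(i+1)}$ carries $Y_{i+1}$.

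Part (1) is then immediate: the hypothesis $Y_{i+1}v=0$ supplies the single extra annihilation needed to upgrade $v$ from a $\mf b^{(i)}$- to a $\mf b^{(i+1)}$-highest weight vector. For part (2), I would set $w:=Y_{i+1}v$ and check that $w$ is killed by every positive root vector of $\mf b^{(i+1)}$. The cases $Y_{i+1}w=0$ and $Y_jw=0$ for $j\leq i$ reduce to $[\mf g_{-1},\mf g_{-1}]=0$: one has $Y_{i+1}^2=\tfrac12[Y_{i+1},Y_{i+1}]=0$, and, using $Y_jv=0$, the identity $Y_jw=-Y_{i+1}Y_jv+[Y_j,Y_{i+1}]v$ vanishes termwise.

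The main obstacle is the remaining cases: the even root vectors $E\in\mf n_\oa$ and the odd positive root vectors $X_j$ with $j\geq i+2$. After super-commuting them past $Y_{i+1}$ and invoking $Ev=0=X_jv$, one arrives at the expressions $[E,Y_{i+1}]v$ and $[X_j,Y_{i+1}]v$, which must be shown to vanish. The bracket $[E,Y_{i+1}]$ lies in $\mf g_{-1}$ with weight $\gamma-\alpha_{i+1}$ (where $\gamma$ is the weight of $E$), while $[X_j,Y_{i+1}]$ lies in $\mf g_0$ with weight $\alpha_j-\alpha_{i+1}$. The key root-theoretic input is that, because $\alpha_{i+1}$ is \emph{simple} in $R^{i}$ while the other root ($\gamma$ or $\alpha_j$) is positive in $R^{i}$, the remaining root $\alpha_{i+1}-\gamma$ (respectively $\alpha_{i+1}-\alpha_j$) cannot be positive in $R^{i}$: otherwise $\alpha_{i+1}$ would decompose as a sum of two positive roots of $R^{i}$, contradicting simplicity. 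Consequently $[E,Y_{i+1}]$ is either zero or proportional to some $Y_k$ with $k\leq i$, and $[X_j,Y_{i+1}]$ is either zero or a positive even root vector in $\mf n_\oa$; in both situations $v$ is annihilated, completing the verification. The crux of the whole argument is therefore this simplicity-of-$\alpha_{i+1}$ observation inside the mixed odd/even root system~$R^{i}$.
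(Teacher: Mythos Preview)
Your argument is correct. The paper does not actually give a proof of this lemma; it simply cites \cite[Lemma~1.40]{ChWa12} and states the result. Your direct verification---exploiting that $\mf b^{(i)}$ and $\mf b^{(i+1)}$ differ only in the single odd simple direction $\alpha_{i+1}$, and then using the simplicity of $\alpha_{i+1}$ in $R^{i}$ to control the commutators $[E,Y_{i+1}]$ and $[X_j,Y_{i+1}]$---is precisely the standard argument behind that reference. The key structural inputs you use (namely $[\mf g_{-1},\mf g_{-1}]=0$ and one-dimensionality of root spaces) are valid in the type~I basic classical setting in which the lemma is applied, so nothing is missing.
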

	
\begin{ex}
Let $\mf g= \mf{osp}(2|2n)$. We start with the simple system 
(see \cite[Section 1.3.4]{ChWa12})
\[\{\vare-\delta_1,~\delta_2-\delta_3,~\ldots,~\delta_{n-1}-\delta_n,~ 2\delta_n\}.\]
Then, by a direct computation using \cite[Equation (1.44) of Lemma 1.30]{ChWa12}, 
the ordering  \eqref{eq::oddfleordering} in this case can be chosen as follows: 
\begin{align}
&\vare -\delta_1,~\vare-\delta_2,~\ldots,~ \vare- 
\delta_n,~\vare +\delta_n,~\vare +\delta_{n-1},~\ldots,\vare+\delta_1. \label{eq::osproots}
\end{align}
\end{ex}
	
Now we are in a position to  prove Proposition \ref{thm::basicgpdsimple}.

\begin{proof}[Proof of Proposition \ref{thm::basicgpdsimple}] 
Let $v\in L(\la)$ and $v'\in K(\la)$  be highest weight vectors.  
Since $\la$ is atypical, the kernel of the natural epimorphism 
$\phi: K(\la)\rightarrow L(\la)$ is non-zero. 
By \cite[Lemma 3.2]{CM}, the kernel of $\phi$ contains $Y_dY_{d-1}\cdots Y_1v'$ 
and so $Y_dY_{d-1}\cdots Y_1v= 0$. This means that there is 
$0\leq \ell < d$ such that $Y_{\ell} Y_{\ell - 1}\cdots Y_1 v\neq 0$ 
and $Y_{\ell+1} Y_{\ell}\cdots Y_1 v =0$. By 
Lemma \ref{lem::29}, $Y_{\ell} Y_{\ell - 1}\cdots Y_1 v$ is a 
highest weight vector of $L$ with respect to both $\mf b^{\ell}$ 
and $\mf b^{\ell+1}$. Therefore the weight spaces corresponding to the weights 
$\la-\alpha_1 +\cdots -\alpha_{\ell}\pm \alpha_{\ell+1}$ of $L(\la)$ 
are zero. In particular, the vector 
$Y_{\ell} Y_{\ell - 1}\cdots Y_1 v\not \in Y_{\ell+1} L(\la)$. 
This means that that $L(\la)_{Y_{\ell+1}}\neq 0$ 
and the claim  of our lemma follows from Corollary \ref{cor::59}. 
\end{proof}
	
\subsubsection{(Dual) Verma supermodules  over $\mf{osp}(2|2n)$} 

In this subsection, we assume that $\mf g=\mf{osp}(2|2n)$, for some positive integer $n$. 
The following proposition shows that atypical Verma supermodules have infinite projective dimension.  We recall that $\Sigma^+_\fp$ denotes the set of highest weights of simple modules in $\mc O^\fp$.

\begin{prop}  \label{thm::pdospVerma}
Suppose that $\la\in \Sigma_\fp^+$. 
Then the following conditions are equivalent:
\begin{itemize}
\item[(1)] $\epd_{\mc  O^\fp}\Delta^\fp(\la) <\infty$.
\item[(2)] $\epd_{\mc  O}\Delta^\fp(\la) <\infty$.
\item[(3)] $\epd_{\mc  O^\fp}\Delta^\fp(\la) = \epd_{\mc  O^\fp_\oa}\Delta^\fp(\la) 
= \epd_{\mc  O}\Delta^\fp(\la) -2\ell(w_0^\fp)$.
\item[(4)] $\la$ is typical.	
\end{itemize} 
\end{prop}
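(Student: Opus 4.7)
The plan is to decouple the equivalences into a formal chain $(1)\Leftrightarrow(2)\Leftrightarrow(3)$ that follows from the restriction machinery of Section~\ref{Sect::homodim}, and a content-laden equivalence $(1)\Leftrightarrow(4)$ that reflects specific representation-theoretic features of $\mf{osp}(2|2n)$. For the formal part, Theorem~\ref{thm::510} immediately yields $(1)\Leftrightarrow(2)$ together with the identity $\pd_{\mc O^\fp}\Delta^\fp(\la)=\pd_{\mc O}\Delta^\fp(\la)-2\ell(w_0^\fp)$ once finiteness is known on either side. The middle equality in $(3)$, interpreted as $\pd_{\mc O^\fp}\Delta^\fp(\la)=\pd_{\mc O^\fp_\oa}\Res\Delta^\fp(\la)$, is the Coulembier identity $\pd_{\mc O^\fp}M=\pd_{\mc O^\fp_\oa}\Res M$ from \cite[Corollary~3.2.5]{Co} and \cite[Theorem~7.2.1]{Co}, already used in the proof of Theorem~\ref{thm::510}.

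For $(4)\Rightarrow(1)$, I would invoke Kac's typicality theorem \cite[Theorem~1]{Ka2}, which for $\mf{osp}(2|2n)$ guarantees $K(\la)=L(\la)$ whenever $\la$ is typical, together with Gorelik's block-equivalence theorem \cite[Theorem~1.3.1]{Gor2} identifying each typical block of $\mc O$ for $\mf{osp}(2|2n)$ with an integral block of $\mc O_\oa$ for $\mf g_\oa\cong \C\oplus \mf{sp}(2n)$. This equivalence restricts to the parabolic subcategories and sends $\Delta^\fp(\la)$ to the parabolic Verma module $\Delta^{\fp_\oa}_\oa(\la)$ for $\mf g_\oa$, whose projective dimension in $\mc O^\fp_\oa$ is finite and explicitly computed by \cite[Theorem~4.1]{CoM2}; chained with $(1)\Leftrightarrow(2)\Leftrightarrow(3)$ this also pins down the concrete value.

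For $(1)\Rightarrow(4)$ I argue the contrapositive: assuming $\la$ atypical, I aim to conclude $\pd_{\mc O^\fp}\Delta^\fp(\la)=\infty$ via Corollary~\ref{cor::59}, which reduces to $X_{\Delta^\fp(\la)}\neq 0$. Kac's atypicality criterion produces an odd root $\beta\in\Phi^+_\ob$ with $\langle\la+\rho_\oa,\beta\rangle=0$, and since $[\mf g_{\pm 1},\mf g_{\pm 1}]=0$ for $\mf{osp}(2|2n)$ any odd root vector $x\in\mf g^{-\beta}$ is self-commuting. Proposition~\ref{thm::basicgpdsimple} gives $X_{L(\la)}\neq 0$; since $L(\la)$ is a quotient of $\Delta^\fp(\la)$, Lemma~\ref{lem::36} applied to $0\to N\to \Delta^\fp(\la)\to L(\la)\to 0$ forces at least one of $X_{\Delta^\fp(\la)}$ or $X_N$ to be non-zero, and I would iterate along a composition series of $N$ (each factor being atypical with non-vanishing associated variety again by Proposition~\ref{thm::basicgpdsimple}) to lift the non-vanishing to $\Delta^\fp(\la)$ itself. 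The main obstacle is precisely this lifting step: the annihilation $Y_1\cdots Y_d v=0$ that drives the proof of Proposition~\ref{thm::basicgpdsimple} is unavailable in a Verma module, so one either invokes the Duflo--Serganova inheritance of $X_M$ through simple composition factors from \cite{DS}, or constructs an atypical singular vector in $\Delta^\fp(\la)$ directly (analogous to the one in \cite{LLW} for $\mathfrak{gl}(m|n)$) to witness a non-zero class in $\ker_x/x\Delta^\fp(\la)$.
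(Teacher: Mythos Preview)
Your treatment of $(1)\Leftrightarrow(2)\Leftrightarrow(3)$ and of $(4)\Rightarrow(1)$ matches the paper's (the paper cites \cite[Lemma~6.3.2]{Co} rather than \cite[Theorem~7.2.1]{Co} for the identity under finiteness, and invokes Gorelik \cite[Theorem~1.3.1, Theorem~1.4.1]{Gor2} directly for the typical direction, but the content is the same).

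The gap you flag in the contrapositive of $(1)\Rightarrow(4)$ is real, and the iteration you propose does not close it. Lemma~\ref{lem::36} reads: from $A_x=B_x=0$ conclude $C_x=0$. Its contrapositive, applied to $0\to N\to\Delta^\fp(\la)\to L(\la)\to 0$, tells you only that $X_{L(\la)}\neq 0$ forces $X_N\neq 0$ \emph{or} $X_{\Delta^\fp(\la)}\neq 0$; it gives no mechanism to promote $X_N\neq 0$ for the \emph{submodule} $N$ to $X_{\Delta^\fp(\la)}\neq 0$. This inheritance genuinely fails: a rank-one free module over $\mathbb{C}[x]/(x^2)$ has vanishing $(\,\cdot\,)_x$ while its simple socle does not. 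So even knowing $X_{L(\mu)}\neq 0$ for every composition factor $L(\mu)$ (which Proposition~\ref{thm::basicgpdsimple} does supply, all factors in an atypical block being atypical) does not yield $X_{\Delta^\fp(\la)}\neq 0$, and no inheritance-through-composition-factors statement of this kind appears in \cite{DS}.

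The paper resolves this by carrying out your second suggested option: it exhibits the witness inside $\Delta^\fp(\la)$ explicitly. With the ordered roots $\alpha_1,\dots,\alpha_{2n}$ of $\mf g_{-1}$ and root vectors $X_k\in\mf g^{-\alpha_k}$, $Y_k\in\mf g^{\alpha_k}$ normalized so that $[X_k,Y_k]=\pm(H_\vare\mp H_j)$, one first checks the triangularity relations~\eqref{eq::611}, which force $X_kY_{k+1}\cdots Y_{2n}v=0$ for the highest weight vector $v$. The atypicality $(\la+\rho,\alpha)=0$ with $\alpha=\vare\pm\delta_i$ then determines an index $k$ for which additionally $X_kY_kY_{k+1}\cdots Y_{2n}v=(\la+\rho,\alpha)\,Y_{k+1}\cdots Y_{2n}v=0$. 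Since the weight space of $\Delta^\fp(\la)$ at the weight of $Y_kY_{k+1}\cdots Y_{2n}v$ is one-dimensional, the nonzero vector $Y_{k+1}\cdots Y_{2n}v$ cannot lie in $X_k\Delta^\fp(\la)$, and hence $(\Delta^\fp(\la))_{X_k}\neq 0$. This is precisely the direct singular-vector construction you allude to; the paper performs it rather than citing \cite{LLW}.
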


\begin{proof} 
We first note that the implication $(1)\Rightarrow (3)$ follows from  
\cite[Corollary~3.2.5]{Co} and \cite[Lemma 6.3.2]{Co}. 
Therefore we have $(1)\Leftrightarrow (3)$. From \cite[Theorem~1.3.1, Theorem 1.4.1]{Gor2} 
it follows that $(4)\Rightarrow (1), (2)$.  Therefore it remains to show that, if 
$\la$ is atypical, then the associated variety of $\Delta^\fp(\la)$ is non-zero. 
	
We first define an ordering of roots of $\mf g_{-1}$ 
(i.e. roots in $\Phi_\ob \backslash \Phi^+_\ob$) as follows:
\begin{align}
&\alpha_1:=-\vare -\delta_1<\alpha_2:=-\vare-\delta_2<\ldots< \alpha_{n}:=-\vare- \delta_n\\&< \alpha_{n+1}:=-\vare +\delta_n
<\alpha_{n+2}:=-\vare +\delta_{n-1}<\ldots<\alpha_{2n}:=-\vare+\delta_1. \label{eq::osproots2}
\end{align} 
By \eqref{eq::osp}, for each $1\leq i \leq n$, there exist non-zero 
vectors $X_i \in \g^{-\alpha_i}$ and $Y_{i}\in \g^{\alpha_i}$ such that 
\begin{align}
&[X_i,Y_i] = H_\vare-H_{i},\text{ for $1\leq i\leq n$},\\
&[X_{n+i},Y_{n+i}] = -H_\vare-H_{n-i+1} ,\text{ for $1\leq i\leq n$}.
\end{align} 
We observe the following crucial fact: 
\begin{align}
&\mf n^+_\oa Y_i \subset \bigoplus_{i< j}\C Y_j,~[X_i,Y_j] \subset \mf n^+_\oa,
\label{eq::611} 
\end{align} 
for any $1\leq i<j\leq 2n.$  As a consequence, we have  
\begin{align}
&X_iY_{j}\cdots Y_{2n}v  =0, \label{eq::613}
\end{align} for any $1\leq i<j\leq 2n$. 

Let $\displaystyle\rho : =-n \vare+ \sum_{1\leq i\leq n} (n-i+1) \vare_i$ be the Weyl 
vector, i.e., $\rho$ is the half of the sum of roots in $\Phi_\oa^+$ minus the half of
the sum of roots in $\Phi^+_\ob$. Since $\la$ is atypical, we have  
\begin{displaymath}
\prod_{\alpha\in\Phi^+_{\bar{1}}}(\la+\rho,\alpha) =0,
\end{displaymath} 
by \cite[Subsection~4.2]{Gor1}. Therefore there exists $\alpha = \vare+c\delta_i$ 
such that $(\la +\rho ,\alpha) =0$ and $c=\pm 1.$ We let $v$ be a highest 
weight vector of $\Delta^\fp(\la)$. Also, we consider the element 
$\displaystyle\la = \la_\vare \vare+\sum_{1\leq i\leq n} \la_i \delta_i$. 
	
Suppose that $c=1$. By \eqref{eq::613} and direct computation, we 
get $X_iY_{i+1}\cdots Y_{2n}v =0$ and 
\begin{align*}
&X_iY_iY_{i+1}\cdots Y_{2n}v \\
&= (H_\vare -H_i)Y_{i+1}\cdots Y_{2n}v \\
&= (\la_\vare -\la_i-2n+i -1) Y_{i+1}\cdots Y_{2n}v \\
&=(\la+\rho, \alpha)Y_{i+1}\cdots Y_{2n}v =0.
\end{align*}
Since the weight subspace of $\Delta^\fp(\la)$ of weight  
$\displaystyle\la-\sum_{p\leq q\leq 2n}\alpha_q$ is one-dimensional and spanned by 
$Y_{p}Y_{p+1}\cdots Y_{2n}v$, for any $1\leq p\leq 2n$, we may conclude 
that $Y_{i+1}\cdots Y_{2n}v \notin X_i\Delta^\fp(\la)$, for otherwise 
$Y_{i+1}\cdots Y_{2n}v=0$, which is a contradiction. Consequently, 
$X_{\Delta^\fp(\la)}\neq 0$. Hence, in this case, the fact that
the associated variety of $\Delta^\fp(\la)$ is non-zero 
follows from  Corollary~\ref{cor::59}. 
   
Suppose that $c=-1$. Set $\ov i:=n-i+1$, for each $1\leq i\leq n$. 
Similarly, using \eqref{eq::613} and direct computation, we 
obtain $X_{n+\ov i}Y_{n+\ov i +1 +1}Y_{2n}v=0$ and 
\begin{align*}
&X_{n+\ov i}Y_{n+\ov i}Y_{n+\ov i+1}\cdots Y_{2n}v \\
&= -(H_\vare +H_i)Y_{n+\ov i+1}\cdots Y_{2n}v \\
&=(-1) (\la_\vare +\la_i -i+1) Y_{n+\ov i+1}\cdots Y_{2n}v \\
&=-(\la+\rho, \alpha)Y_{n+\ov i+1}\cdots Y_{2n}v =0.
\end{align*} 
As above, this implies that $Y_{n+\ov i+1}\cdots Y_{2n}v \notin X_{n+\ov i}\Delta\fp(\la)$ 
and yields  $X_{\Delta^\fp(\la)}\neq 0$. Again, the fact that
the associated variety of $\Delta^\fp(\la)$ is non-zero
now follows from  Corollary \ref{cor::59}.  
This completes the proof. 
\end{proof}

\begin{cor}
Suppose that $\la\in \Sigma_\fp^+$. 
Then the following are equivalent:
\begin{itemize} 
		\item[(1)] $\epd_{\mc  O^\fp}\nabla^\fp(\la) <\infty$.
\item[(2)] $\epd_{\mc  O}\nabla^\fp(\la) <\infty$.
\item[(3)] $\epd_{\mc  O^\fp}\nabla^\fp(\la) = \epd_{\mc  O^\fp_\oa}\nabla^\fp(\la) = \epd_{\mc  O}\nabla^\fp(\la) -2\ell(w_0^\fp)$.
\item[(4)] $\la$ is typical.	
\end{itemize} 
\end{cor}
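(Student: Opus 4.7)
The plan is to mirror the proof of Proposition~\ref{thm::pdospVerma}, passing from $\nabla$'s to $\Delta$'s through the duality ${\bf D}\colon \mc O^\fp\to \mc O^{\hat \fp}$ of Subsection~\ref{sect::54}. The starting point is the identity ${\bf D}\,\nabla^\fp(\la)=\Delta^{\hat\fp}(-w_0\la)$ from \eqref{eq::D}, together with the fact that ${\bf D}$, being exact and contravariant, interchanges projective and injective resolutions; consequently
\[
\pd_{\mc O^\fp}\nabla^\fp(\la)=\id_{\mc O^{\hat\fp}}\Delta^{\hat\fp}(-w_0\la).
\]
Moreover, typicality is preserved under the pairing $\la\leftrightarrow -w_0\la$ and $\mf b\leftrightarrow \hat{\mf b}$: since $-w_0\rho$ coincides with the half-sum of positive roots for $\hat{\mf b}$ and $w_0$ permutes $\Phi^+_\ob$, the product $\prod_{\alpha\in \Phi^+_\ob}(\la+\rho,\alpha)$ is nonzero if and only if the analogous product for $-w_0\la$ relative to $\hat{\Phi^+_\ob}$ is nonzero.

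The equivalence $(1)\Leftrightarrow (3)$ is obtained exactly as in the Verma case: the identity $\pd_{\mc O^\fp}\nabla^\fp(\la)=\pd_{\mc O^\fp_\oa}\nabla^\fp_\oa(\la)$ follows from Lemma~\ref{lem::31}, using that $\nabla^\fp_\oa(\la)$ is a direct summand of $\Res\nabla^\fp(\la)$, and the shift to $\mc O$ is Theorem~\ref{thm::510}. The implication $(4)\Rightarrow (1),(2)$ then follows from Gorelik's equivalence \cite[Theorems~1.3.1, 1.4.1]{Gor2} between a typical block of $\mc O$ and a block of $\mc O_\oa$: the latter has finite global dimension, so both $\pd_{\mc O}\nabla^\fp(\la)$ and $\pd_{\mc O^\fp}\nabla^\fp(\la)$ are finite.

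For the remaining direction, I argue by contrapositive: if $\la$ is atypical, then I will show $X_{\nabla^\fp(\la)}\neq 0$, whence $\pd_{\mc O^\fp}\nabla^\fp(\la)=\infty$ by Corollary~\ref{cor::59}. The plan is to exploit that ${\bf D}$ preserves $x$-cohomology: for any $x\in \mf g_\ob$ with $[x,x]=0$, the contragredient construction of ${\bf D}$ yields a natural super-vector-space isomorphism $({\bf D} M)_x\cong (M_x)^*$, whence $X_{{\bf D} M}=X_M$. Applied to $M=\nabla^\fp(\la)$, this gives $X_{\nabla^\fp(\la)}=X_{\Delta^{\hat\fp}(-w_0\la)}$. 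When $\la$ is atypical, so is $-w_0\la$ with respect to $\hat{\mf b}$, and the explicit root-vector computation from Proposition~\ref{thm::pdospVerma}, transposed to the $\hat{\mf b}$-setup by swapping the roles of $\mf g_1$ and $\mf g_{-1}$, produces a nonzero element in $X_{\Delta^{\hat\fp}(-w_0\la)}$.

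The main technical obstacle is the verification of $X_{{\bf D} M}=X_M$, a standard but slightly delicate observation about the interaction of contragredient dualities with $x$-cohomology; it should come out directly from unraveling the construction of ${\bf D}$ given in \cite[Section~1.3]{CCC}. An alternative, less clean, route would be to carry out the root-vector computation directly on $\nabla^\fp(\la)$, which is awkward because the highest-weight generator of $\nabla^\fp(\la)$ lies in its socle rather than at the top, so the weight-space bookkeeping that makes the Verma argument transparent has to be redone.
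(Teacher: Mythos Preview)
Your approach is essentially correct, but it is considerably more elaborate than the paper's and has one small mis-citation.

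For $(1)\Leftrightarrow(3)$ you invoke Lemma~\ref{lem::31}, but that lemma only yields the inequality $\pd_{\mc O^\fp}\nabla^\fp(\la)\geq \pd_{\mc O^\fp_\oa}\nabla^\fp_\oa(\la)$ (this is Corollary~\ref{coro::53}). To upgrade to an equality under the finiteness hypothesis one needs \cite[Corollary~3.2.5, Lemma~6.3.2]{Co}, exactly as in the proof of Proposition~\ref{thm::pdospVerma}; this is what the paper cites.

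For the hard direction, the paper takes a much shorter route that you overlooked. Since $\mf{osp}(2|2n)$ is \emph{contragredient} (see \cite[Theorem~5.1.5]{Mu12}), the category $\mc O^\fp$ carries a \emph{simple-preserving} duality (cf.\ \cite[Section~13.7]{Mu12}) which sends $\nabla^\fp(\la)$ to $\Delta^\fp(\la)$ with the \emph{same} $\la$ and stays inside $\mc O^\fp$. Combined with \cite[Theorem~3]{Ma} (which links finiteness of projective and injective dimension), one immediately reduces the statements $(1)$, $(2)$, $(4)$ for $\nabla^\fp(\la)$ to the already-proved Proposition~\ref{thm::pdospVerma} for $\Delta^\fp(\la)$. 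No change of Borel, no $-w_0\la$, no transposition of the root-vector computation, and no need to verify the compatibility $X_{{\bf D}M}=X_M$.

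By contrast, your route via ${\bf D}\colon \mc O^\fp\to \mc O^{\hat\fp}$ forces you to work with $\Delta^{\hat\fp}(-w_0\la)$ and to check two auxiliary facts: that ${\bf D}$ preserves $x$-cohomology (true, but it requires unwinding the construction of ${\bf D}$ in \cite{CCC} and keeping track of the automorphism twist), and that the explicit computation of Proposition~\ref{thm::pdospVerma} goes through after swapping $\mf g_1\leftrightarrow \mf g_{-1}$. Both are doable, but neither is needed once one remembers that $\mf{osp}(2|2n)$ already has an internal simple-preserving duality on $\mc O^\fp$.
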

\begin{proof}

The implication $(1)\Rightarrow (3)$ follows from  
\cite[Corollary~3.2.5]{Co} and \cite[Lemma 6.3.2]{Co}. Therefore the statements in  $(1)$ and $(3)$ are equivalent. 
	
	We recall that $\mf g= \mf{osp}(2|2n)$ is  one of the contragredient Lie superalgebras in \cite[Theorem 5.1.5]{Mu12}. Therefore $\mc O^\fp$ admits a simple-preserving duality (see, e.g., \cite[Section 13.7]{Mu12}) which sends dual Verma supermodules to Verma supermodules. By \cite[Theorem 3]{Ma}, it follows from Proposition \ref{thm::pdospVerma} that the statements in $(1),(2)$ and $(4)$ are equivalent.

	\end{proof}

\subsection{Example II: projective dimension of  modules for $\pn$} \label{Sect::projdimpn}

In this section, we assume that $\mf g=\pn$, for some positive integer $n$. 
It is known from \cite[Section~ 5.5]{CCC} that  every parabolic category 
is equivalent to the parabolic category given 
by a parabolic subalgebra  $\mf p$ with $\mf p_\ob=\mf  g_1$. 
We recall that  $\Sigma^+_\fp$ denotes the set of highest weights of simple modules
in $\mc O^\fp$.

\begin{prop} \label{prop::512}
Suppose that $M\in \mc O^\fp$ is a non-zero quotient of $\Delta^\fp(\la)$, 
for some $\la \in \Sigma^+_\fp$. 
Then the associated variety $X_M$ is non-zero. In particular, 
$$\epd_{\mc O}M =\epd_{\mc O^\fp}M =\infty.$$ 
In particular, the modules $L(\la),~K(\la)$ and $\Delta^\fp(\la)$ 
all have infinite projective dimension in both $\mc O$ and $\mc O^\fp$. 
\end{prop}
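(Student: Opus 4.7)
The plan is to deduce this as a direct application of Corollary~\ref{coro::infproj}: I need to exhibit a single weight $\alpha$ of $\mf g_1$ such that $-\alpha$ is not a weight of $\Lambda^\bullet\mf g_{-1}\otimes U(\mf n^-_\oa)$. Once such an $\alpha$ is produced, the corollary immediately yields that every non-zero quotient of $\Delta^\fp(\la)$ has non-zero associated variety and hence infinite projective dimension in both $\mc O$ and $\mc O^\fp$. The ``in particular'' assertions follow since $L(\la)$ and $\Delta^\fp(\la)$ are manifestly quotients of $\Delta^\fp(\la)$, and $K(\la) = K(L_\oa(\la))$ is a quotient of $K(\Delta_\oa^\fp(\la)) = \Delta^\fp(\la)$ by exactness of the Kac functor applied to $\Delta_\oa^\fp(\la)\twoheadrightarrow L_\oa(\la)$.

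My choice is $\alpha := 2\epsilon_n \in \Phi_\ob^+$, coming from the root vector $E_{n,2n}\in\mf g_1$. To rule out $-2\epsilon_n$ as a weight of $\Lambda^\bullet\mf g_{-1}\otimes U(\mf n^-_\oa)$, I would combine two observations. First, the total-coordinate functional $\sigma(\sum c_i\epsilon_i):=\sum c_i$ annihilates every negative even root $\epsilon_j-\epsilon_i$ (with $i<j$) and takes the value $-2$ on every odd root $-\epsilon_i-\epsilon_j\in\Phi^-_\ob$. Since $\sigma(-2\epsilon_n)=-2$, any decomposition $-2\epsilon_n = \mu_1+\mu_2$, with $\mu_1$ a weight of $\Lambda^\bullet\mf g_{-1}$ and $\mu_2$ a weight of $U(\mf n^-_\oa)$, must use exactly one odd root. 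That is, $\mu_1 = -\epsilon_a-\epsilon_b$ for some $a<b$, and consequently $\mu_2 = \epsilon_a+\epsilon_b-2\epsilon_n$.

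Second, because $n$ is the largest index, each negative even root $\epsilon_j-\epsilon_i$ with $i<j\le n$ has $\epsilon_n$-coefficient $\ge 0$, and hence every weight of $U(\mf n^-_\oa)$ has non-negative $\epsilon_n$-coefficient. However, the $\epsilon_n$-coefficient of $\mu_2 = \epsilon_a+\epsilon_b-2\epsilon_n$ equals $-1$ (when $b=n$) or $-2$ (when $b<n$), contradicting non-negativity. This rules out $-2\epsilon_n$ as a weight and completes the verification.

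The main subtlety is precisely the choice of $\alpha$: natural candidates fail. For instance, when $n=2$ one has $-2\epsilon_1 = (-\epsilon_1-\epsilon_2)+(\epsilon_2-\epsilon_1)$, so $\alpha = 2\epsilon_1$ does not work; the asymmetry exploited by $\alpha = 2\epsilon_n$ is special to the largest index and relies on the one-sided nature of the negative roots of $\gl(n)$. Beyond this choice, the verification is a short bookkeeping argument and Corollary~\ref{coro::infproj} does the rest.
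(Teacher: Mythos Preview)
Your proof is correct and follows exactly the same approach as the paper: both choose $\alpha=2\epsilon_n$ and invoke Corollary~\ref{coro::infproj}. The paper simply asserts that $-2\epsilon_n$ is not a weight of $\Lambda^\bullet\mf g_{-1}\otimes U(\mf n^-_\oa)$ without further justification, whereas you supply the explicit verification via the $\sigma$-functional and the $\epsilon_n$-coefficient count.
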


\begin{proof}
We may observe that the element $2\vare_n$ is a root of $\mf g_1$ but $-2\vare_n$ 
is not a weight of $\Lambda^{\bullet} \mf g_{-1}\otimes U(\mf n^-_\oa)$. 
The assertion of the proposition now follows directly from Corollary~\ref{coro::infproj}. 
\end{proof}

We recall from \cite{Se02} that a weight 
$\displaystyle\la =\sum_{1\leq i\leq n}\la_i\vare_i\in \h^\ast$ is called {\em atypical}  if  
\[  \prod_{1\leq i\neq j \leq  n}(\la_i -\la_j +j-i-1) =0. \]
We describe the projective dimension of costandard objects in the  following proposition. 

\begin{thm}
Let  $\la \in \Sigma^+_\fp$.  Then the following
conditions are equivalent:
\begin{enumerate}
\item $\epd_{\mc O^\fp} \nabla^\fp(\la) <\infty.$
\item $\epd_{\mc O^\fp} \nabla^\fp(\la) =\epd_{\mc O_\oa^\fp} \nabla^\fp_\oa(\la).$
\item $\la$ is typical.
\end{enumerate}
\end{thm}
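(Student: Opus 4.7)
The plan is to close the cycle $(2)\Rightarrow(1)\Rightarrow(3)\Rightarrow(2)$.

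The implication $(2)\Rightarrow(1)$ is immediate, since $\mc O^{\fp}_\oa$ is a parabolic block for the reductive Lie algebra $\mf g_\oa=\gl(n)$ and hence has finite global dimension by \eqref{lem::34}, so the right-hand side of $(2)$ is finite.

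For $(1)\Rightarrow(3)$ I would argue by contrapositive through the associated-variety machinery of Section~\ref{sect::55}, in the same spirit as Proposition~\ref{thm::pdospVerma}. Assume $\la$ is atypical, so that $\la_i-\la_j+j-i-1=0$ for some pair $i\ne j$; we may take $i<j$, so that $-\vare_i-\vare_j$ is a root of $\mf g_{-1}$. Pick a root vector $Y\in\mf g^{-\vare_i-\vare_j}\subseteq\mf g_{-1}$ and the paired raising vector $X\in\mf g^{\vare_i+\vare_j}\subseteq\mf g_1$; since $[\mf g_{-1},\mf g_{-1}]=0$, we have $[Y,Y]=0$. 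Working with the explicit realization
\[ \nabla^{\fp}(\la)\cong\Ind_{\mf g_\oa+\mf g_{-1}}^{\mf g}\bigl(\nabla^{\fp}_\oa(\la)\otimes\Lambda^{\mathrm{top}}\mf g_1^{\ast}\bigr), \]
I would identify a weight vector $v$ built from the $\Lambda^{\mathrm{top}}\mf g_1^{\ast}$ factor and the highest-weight vector of $\nabla^{\fp}_\oa(\la)$, and then compute $[X,Y]\in\mf h$ acting on $v$: the resulting scalar is a linear combination of components of $\la+\rho$ whose vanishing is precisely the atypicality relation, so $Yv=0$. A one-dimensionality check on the relevant weight space then shows $v\notin Y\cdot\nabla^{\fp}(\la)$, so $X_{\nabla^{\fp}(\la)}\ne 0$, and Corollary~\ref{cor::59} delivers $\epd_{\mc O^{\fp}}\nabla^{\fp}(\la)=\infty$.

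The main work is in $(3)\Rightarrow(2)$: by Corollary~\ref{coro::53} we already have $\epd_{\mc O^{\fp}}\nabla^{\fp}(\la)\ge \epd_{\mc O^{\fp}_\oa}\nabla^{\fp}_\oa(\la)$, and only the reverse inequality is at issue. My plan is to show that for typical $\la$ the module $\nabla^{\fp}(\la)$ is a direct summand of $\Coind\nabla^{\fp}_\oa(\la)$; combined with Lemma~\ref{lem::31}(2) this gives
\[ \epd_{\mc O^{\fp}}\nabla^{\fp}(\la)\le \epd_{\mc O^{\fp}}\Coind\nabla^{\fp}_\oa(\la)=\epd_{\mc O^{\fp}_\oa}\nabla^{\fp}_\oa(\la), \]
closing the cycle. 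The summand assertion is the main obstacle, and I expect to prove it via a central-character argument: Serganova's typicality condition should guarantee that the central-character decomposition of $\Coind\nabla^{\fp}_\oa(\la)$ isolates a unique constituent containing a $\mf b$-highest weight vector of weight $\la$, forcing $\nabla^{\fp}(\la)$ to split off. Since $\pn$ has no Casimir, verifying this central-character separation must rely on Serganova's or Gorelik's description of $\mc Z(U(\pn))$ in terms of supersymmetric invariants; alternatively, the reverse inequality can be deduced by transporting projective dimensions through the known equivalence between typical blocks of $\mc O$ for $\pn$ and blocks of $\mc O$ for $\gl(n)$.
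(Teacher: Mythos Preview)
Your cycle $(2)\Rightarrow(1)$ is fine, and your plan for $(1)\Rightarrow(3)$ via the associated variety is the same idea as in the paper, though your sketch is incomplete: the reduction ``we may take $i<j$'' is not free. If the vanishing factor has $i>j$, relabelling gives $\la_{i'}-\la_{j'}+j'-i'+1=0$ with $i'<j'$, which is a genuinely different case. The paper treats the two cases $\la_i-\la_j+j-i=\pm 1$ (with $i<j$) separately, and in each case the witness vector in $\nabla^{\fp}(\la)$ is not simply the highest weight vector but a carefully chosen product $\prod_{\alpha\in S}X_\alpha\cdot v_\la$ over an explicit subset $S\subset\Phi^+_\ob$; the one-dimensionality and the vanishing of $Y_{\vare_i+\vare_j}$ on it require the combinatorics of the ordering $\preceq$ on $\Phi^+_\ob$.

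The real gap is in $(3)\Rightarrow(2)$. Both of your proposed mechanisms fail for $\pn$. First, the centre $\mc Z(U(\pn))$ is trivial (this is exactly why $\pn$ is so different from the basic classical superalgebras), so there is no central-character separation to perform; the various $\nabla^{\fp}(\mu)$ appearing in a costandard filtration of $\Coind\nabla^{\fp}_\oa(\la)$ typically lie in the \emph{same} block as $\nabla^{\fp}(\la)$, cf.\ the description of blocks via the relation $\sim$ in \cite{CC}. Second, there is no equivalence between typical blocks of $\mc O$ for $\pn$ and blocks for $\gl(n)$: by Corollary~\ref{coro::infproj} every block of $\mc O^{\fp}$ for $\pn$ has infinitely many simples, whereas blocks for $\gl(n)$ are finite. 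So neither route produces the splitting you need, and in fact $\nabla^{\fp}(\la)$ is \emph{not} in general a direct summand of $\Coind\nabla^{\fp}_\oa(\la)$.

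The paper's substitute for a block equivalence is the much weaker numerical input \cite[Theorem~4.6]{CP}: for typical $\la$ the costandard multiplicities in tilting modules agree with the even case, $(T^{\fp}(\la):\nabla^{\fp}(\mu))=(T^{\fp}_\oa(\la):\nabla^{\fp}_\oa(\mu))$. One then argues inductively over $J^{\fp}_\la\cdot\la$: for the minimal element one has $T^{\fp}(\la)=\nabla^{\fp}(\la)$ and Proposition~\ref{thm::32} gives finiteness; for higher $x\cdot\la$ the short exact sequence $0\to K\to T^{\fp}(x\cdot\la)\to\nabla^{\fp}(x\cdot\la)\to 0$, with $K$ filtered by $\nabla^{\fp}$'s of strictly smaller weights, propagates $\pd<\infty$ upward. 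This yields $(3)\Rightarrow(1)$, and then $(1)\Rightarrow(2)$ follows from \cite[Corollary~3.2.5, Lemma~6.3.2]{Co}.
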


\begin{proof}
From \cite[Corollary 3.2.5]{Co} and \cite[Lemma 6.3.2]{Co}  it follows that 
$$\pd_{\mc O^\fp}\nb^\fp_{\mc O^\fp}(\la) = \pd_{\mc O^\fp_\oa}\nb^\fp_\oa(\la), $$ 
if $\pd_{\mc O^\fp}\nabla^\fp(\la)<\infty.$ Therefore it remains to show that 
$\pd_{\mc O^\fp}\nb^\fp(\la)<\infty$ if and only if $\la$ is typical.
	
We first suppose that $\la$ is typical. Then, by \cite[Theorem 4.6]{CP}, we have 
\[(T^\fp(\la): \nabla^\fp(\mu)) = (T^\fp_\oa(\la): \nabla^\fp_\oa(\mu)),\]
for any  $\mu\in \Sigma_\fp^+$. 
We start by fixing $\la$ such that $T^\fp(\la) = \nabla^\fp(\la)$. That is,  
let $J^\fp_\la$ be the  intersection of the set of shortest representatives 
in $W^\fp \backslash W$ and the set of longest representatives in $ W/W_\la$.
We choose $\la$ such that it is  a  minimal element in $J^\fp_\la\cdot \la$.
We want to show that $\pd_{\mc O^\fp} \nabla(\mu) <\infty$, 
for any $\mu \in J^\fp_\la \cdot \la$.  
	 	 
We first note that 
\begin{align}
&\pd_{\mc O^\fp}\nabla^\fp(\la) =\pd_{\mc O^\fp} T^\fp(\la) = 
\pd_{\mc O_\oa^\fp} T^\fp_\oa(\la) = \pd_{\mc O_\oa^\fp} \nabla^\fp_\oa(\la), \label{eq::55}
\end{align} 
by Theorem \ref{thm::32}. 
Assume that $x\in J^\fp_\la$ is such that $\pd_{\mc O^\fp}\nabla(y\cdot \la)<\infty,$ 
for any $y\in J^\fp_\la$ with $x\cdot \la> y\cdot \la.$ By \cite[Theorem 3.5]{CCC}, 
there is a submodule $K$ of $T^\fp(x\cdot \la)$ such that we have the short exact 
sequence
\[0\rightarrow K\rightarrow T^\fp(x\cdot \la) \rightarrow \nabla^\fp(x\cdot \la) \rightarrow  0,\] where $(K:\nabla^\fp(z\cdot \la))>0$    only if $z\in J^\fp_\la$  and  $x\cdot \la >z\cdot \la$. 
From our assumption, we have $\pd K<\infty$. 
Since $\pd_{\mc O^\fp} T^\fp(x\cdot \la)<\infty$, it  follows
that $\pd_{\mc O^\fp} \nabla^\fp(x\cdot \la)<\infty$.  
This completes the proof of the implication $(3)\Rightarrow (1).$
	
Before  proving the direction  $(1)\Rightarrow (3)$, we make the following observations. 
	
For any positive odd root $\alpha = \vare_i +\vare_j$, we let $X_\alpha\in \g_1$ be a 
root vectors for $\alpha$. If $\alpha =\vare_i +\vare_j$ with $i< j$, we can pick root 
vectors  $Y_{\alpha}\in \g_{-1}$ for odd roots $-\alpha$  such that 
$[Y_{\alpha}, X_{\alpha}] =H_i-H_j$. 
	
Define a total ordering $\preceq$ on the set $\Phi_\ob^+:=\{\vare_i+\vare_j|~1\leq i<j\leq n\}$ by letting $$\vare_i +\vare_{j+1}\prec \vare_i+\vare_{j},~2\vare_{i}\prec \vare_{i-1} +\vare_n,$$
for $1< i\leq n$ and $1\leq j<n$. For any $1\leq i<j\leq n$ and any subset $$I\subset \hat I:=\{\vare_i+\vare_t|~i\leq t\leq j-1\},$$ we define 
\begin{align*}
&S_{i,j,I}:=\{\vare_s+\vare_t|~s<t,~1\leq s\leq i-1,~1\leq t\leq j\}\cup I.
\end{align*}
	 
Let $\la\in \h^\ast$ and  $v_\la \in \nabla^\fp_\oa(\la)\subset \nb^\fp(\la)$  
be a highest weight vector. Set 
$$X^{i,j,I}:=\prod_{\alpha \in S_{i,j,I}}X_\alpha\otimes v_\la \in \nb^\fp(\la).$$ 
We observe that 
\begin{enumerate}[(a)]
\item 	For any $\alpha \in \Phi_\ob^+,$ we have  
$$[X_\alpha,\mf n_\oa^+] \subseteq  \bigoplus_{\beta\in\Phi_\ob^+, ~\alpha \prec \beta}\C X_\beta.$$  \label{eq::1}
\item $[Y_{\vare_i+\vare_j}, X_{\vare_k+\vare_i}]\in \mf n_\oa^+$ 
has weight $\vare_k-\vare_j$, for $1\leq k\leq i$.   
\item $[Y_{\vare_i+\vare_j}, X_{\vare_i+\vare_k}]\in \mf n_\oa^+$ 
has weight $\vare_k-\vare_j$, for $i<k<j$. 
\item $[Y_{\vare_i+\vare_j}, X_{\vare_k+\vare_j}]\in \mf n_\oa^+$ 
has weight $\vare_k-\vare_i$, for $1\leq k<i$.  \label{eq::4}
\end{enumerate}   
As a consequence, from \eqref{eq::1}-\eqref{eq::4} above, it follows that 
\begin{align}
&Y_{\vare_i +\vare_j} X^{i,j,I}=0. \label{eq::k}
\end{align}

We can now proceed with the proof of $(1)\Rightarrow (3)$. Suppose that $\la$ is 
integral and atypical with $\la_i -\la_j +j-i=\pm 1$, for some $i<j$, satisfying 
$\pd\nb^\fp(\la)<\infty$. Then, by Corollary~\ref{cor::59}, the associated variety 
$X_{\nabla^\fp(\la)}$ is zero.  
   
We have to consider the following two cases:
       \vskip 0.2cm
       
{\bf Case 1}.
Suppose that $\la_i -\la_j +j-i+1=0$.
    
Let $x:=X^{i,j,\hat I}$ and $y:=Y_{\vare_i+\vare_j}$. 
Then   $X_{\nabla^\fp(\la)}=0$ implies that $\nb^\fp(\la)_y=0$. 
Therefore  $x\in y\nb^\fp(\la)$. 
    
Assume that $m\in \nb^\fp(\la)$ is a weight vector such that 
$yx=0$ and $x=ym$. Then $m \in  \C X_{\vare_i+\vare_j}X^{i,j,\hat I}.$ 
This implies that 
\[x=ym\in \C Y_{\vare_i+\vare_j}X_{\vare_i+\vare_j}X^{i,j,\hat I} = 
\C(H_i -H_j)X^{i,j,\hat I} = \la_i -\la_j +j-i+1 =0,  \]
a contradiction. 
\vskip 0.2cm

{\bf Case 2}.
Suppose that $\la_i -\la_j +j-i-1=0$.
Set $I:= \hat I \backslash \{2\vare_i\}$.
    
Let $x:=X^{i,j, I}$ and $y:=Y_{\vare_i+\vare_j}$. 
Then     $X_{\nabla^\fp(\la)}=0$ implies that $\nb^\fp(\la)_y=0$. 
Again, we have  $x\in y\nb^\fp(\la)$. 
      
Assume that $m\in \nb^\fp(\la)$ is a weight vector such that $yx=0$ and $x=ym$. 
Observe that 
$$(H_i-H_j)m = (\la_i-\la_j +j-i-1)m=0,$$  
and 
$$m =   X^{i,j,\hat I}\otimes m_j+ \sum_{i\leq t< j}  
X^{i,j,\hat I\backslash \{\vare_i+\vare_t\}\cup \{\vare_i+\vare_j\}}\otimes m_t,$$
for some $m_j, m_t \in \nb_\oa(\la).$ 
By \eqref{eq::1}-\eqref{eq::4}, for $i\leq t <j$, we have 
$$yX^{i,j,\hat I}\otimes m_j =0,$$
$$y X^{i,j,\hat I\backslash \{\vare_i+\vare_t\}\cup \{\vare_i+\vare_j\}}\otimes m_t  
=(H_i-H_j)X^{i,j,\hat I\backslash \{\vare_i+\vare_t\}}\otimes m_t=0.$$ 
The obtained contradiction completes the proof.
\end{proof}

\vspace{2mm}

\noindent
CC:~Department of Mathematics, National Central University, Zhongli District, Taoyuan City, Taiwan;
E-mail: {\tt cwchen@math.ncu.edu.tw}
\hspace{2cm} 

\noindent
VM:~Department of Mathematics, Uppsala University, Box 480, SE-75106, Uppsala, SWEDEN;
E-mail: {\tt  mazor@math.uu.se}

%%%%%%%%%%%%%%%%%%%%%%%%%%%%%%%%%%%%%%%%%%%%%%%%%%%%%%%%%%%%%%%%%%%%%%%%%%%%%%%

%%%%%%%%%

\end{document}